\definecolor{cobalt}{RGB}{61,89,171}
\theoremstyle{plain}
\newtheorem{theorem}{Theorem}[section]
\newtheorem{proposition}[theorem]{Proposition}
\newtheorem{lemma}[theorem]{Lemma}
\theoremstyle{definition}
\newtheorem{definition}[theorem]{Def{}inition}
\newtheorem{example}[theorem]{Example}
\theoremstyle{remark}
\newtheorem{remark}[theorem]{Remark}
\newcommand{\PL}{\mathrm{PL}}
\newcommand{\bC}{\mathbb{C}}
\newcommand{\bR}{\mathbb{R}}
\newcommand{\bQ}{\mathbb{Q}}
\newcommand{\bZ}{\mathbb{Z}}
\newcommand{\cC}{\mathcal{C}}
\newcommand{\cF}{\mathcal{F}}
\newcommand{\cH}{\mathcal{H}}
\newcommand{\cL}{\mathcal{L}}
\newcommand{\fg}{\mathfrak{g}}
\newcommand{\fh}{\mathfrak{h}}
\newcommand{\ox}{\otimes}
\newcommand{\Lie}[1]{\mathrm{#1}}
\newcommand{\la}{\langle}
\newcommand{\ra}{\rangle}
\begin{document}

\title[Formality and Lefschetz property]{Formality and the Lefschetz property in symplectic and cosymplectic geometry}

\author[G. Bazzoni]{Giovanni Bazzoni}
\address{Fakult\"{a}t f\"{u}r Mathematik, Universit\"{a}t Bielefeld, Postfach 100301, D-33501 Bielefeld}
\email{gbazzoni@math.uni-bielefeld.de}

\author[M. Fern\'andez]{Marisa Fern\'{a}ndez}
\address{Universidad del Pa\'{\i}s Vasco,
Facultad de Ciencia y Tecnolog\'{\i}a, Departamento de Matem\'aticas,
Apartado 644, 48080 Bilbao, Spain}
\email{marisa.fernandez@ehu.es}

\author[V. Mu\~{n}oz]{Vicente Mu\~{n}oz}
\address{Facultad de Matem\'aticas, Universidad Complutense de Madrid, Plaza de Ciencias
3, 28040 Madrid, Spain}
\address{Instituto de Ciencias Matem\'aticas (CSIC-UAM-UC3M-UCM),
C/ Nicol\'as Cabrera 15, 28049 Madrid, Spain}
\email{vicente.munoz@mat.ucm.es}

\subjclass[2010]{53C15, 55S30, 53D35, 55P62, 57R17.} 

\begin{abstract}
We review topological properties of K\"ahler and symplectic manifolds, and of
their odd-dimensional counterparts, coK\"ahler and cosymplectic manifolds. 
We focus on formality, Lefschetz property and parity of Betti numbers, 
also distinguishing the simply-connected case (in the K\"ahler/symplectic situation)
and the $b_1=1$ case (in the coK\"ahler/cosymplectic situation).
\end{abstract}

\maketitle
%
%

\date{}






\section{Introduction}

Symplectic geometry is the study of symplectic manifolds, that is, smooth manifolds endowed with a 2-form which is closed and non-degenerate. Important examples of symplectic manifolds are 
$\bR^{2n}=\bR^n\times(\bR^n)^*$ and, more generally, the cotangent bundle $T^*M$ of any smooth manifold $M$, which is endowed with a canonical symplectic structure. Cotangent bundles are especially 
important in classical mechanics, where they arise ase phase spaces of a classical physical system (see \cite{Arnold}). In case the physical system has symmetries, one can perform a reduction 
procedure on the phase space; this often produces compact examples of symplectic manifolds (see \cite{Marsden_Weinstein}). Therefore, the study of compact symplectic manifolds is also relevant. In 
the 
last decades, starting with the first questions on the global nature of symplectic manifolds and the foundational work of Gromov (see \cite{Gromov_Pseudo}), 
symplectic geometry has become a branch of geometry which is interesting \emph{per se}. Standard references for symplectic geometry include \cite{Audin2,Cannas,McD_Salamon}. For more advanced 
applications, including connections with Mirror Symmetry, we refer to \cite{McD_Sal_J_hol,Seidel}.

K\"ahler manifolds are special examples of symplectic manifolds. Recall that a Hermitian manifold is K\"ahler if its K\"ahler form is closed (see Section \ref{intro:symp} for all the relevant 
definition). In particular, symplectic geometry receives many imputs from algebraic geometry. Indeed, the complex projective space $\bC P^n$ is endowed with a K\"ahler structure, which is inherited 
by all projective varieties.

For a long time, the only known examples of symplectic manifolds came in fact from K\"ahler and algebraic geometry. It was only in 1976 that Thurston gave the first example of a symplectic manifold 
which carries no K\"ahler metric (see \cite{Thurston}). Since then, the quest for examples of symplectic manifolds which do not carry K\"ahler metrics has been a very active area of research in 
symplectic geometry, whose main contribution has been the introduction of new techniques for constructing symplectic manifolds 
(see \cite{Auroux,CoFG,Donaldson,FM,Gompf,Gromov_Pseudo,Gromov,McDuff} as well as Section \ref{FLP_S_SC} for an account).

Cosymplectic geometry was first studied by Libermann in \cite{Libermann}. Blair and the Japanese school tracking back to Sasaki have studied cosymplectic geometry in the setting of almost contact 
metric structures (see \cite{Blair} and the references therein). To some extent, cosymplectic geometry can be viewed as the odd-dimensional counterpart of symplectic geometry. Indeed, cosymplectic 
manifolds are the right geometric framework for time-dependent Hamiltonian mechanics. The main example of a cosymplectic manifold is the product of a symplectic manifold with the real line, or 
a circle. There exist however cosymplectic manifolds which are not products (see Example \ref{non_product}). Very recently, cosymplectic manifolds have appeared in the study of a 
special type of Poisson structures, namely $b$-symplectic structures (see \cite{GMP,GMP1}), as well as a special class of foliated manifolds (see \cite{MT}). We refer to \cite{CM_DN_Y} for a recent 
survey on cosymplectic geometry.

Examples of cosymplectic manifolds come from coK\"ahler geometry. CoK\"ahler structures are normal almost contact metric structures $(\phi,\eta,\xi,g)$ for which the tensor $\phi$ is parallel with 
respect to the Levi-Civita connection (see Section \ref{intro:cosymp} for the relevant definitions). The main example is provided by the product of a K\"ahler manifold and the real line (or the 
circle). We point out here as well that there exist compact coK\"ahler manifolds which are not the product of a compact K\"ahler manifold and a circle (see \cite{BO}).

Compact K\"ahler and coK\"ahler manifolds satisfy quite stringent topological properties, which we have collected in Theorems \ref{Topology_Kahler} and \ref{Topology_coKahler} below. One way to 
produce an example of a symplectic manifold which does not carry K\"ahler metrics (resp. of a cosymplectic manifold which does not carry coK\"ahler metrics) consists in constructing a symplectic 
manifold (resp. a cosymplectic manifold) which violates some of these topological properties.

In this paper we study two of these topological properties, which manifest both in the K\"ahler and in the coK\"ahler case, namely the Lefschetz property and the formality of the rational homotopy 
type. The interplay between them has been extensively investigated in the symplectic case, yet nothing has been said in the cosymplectic case. 

In the symplectic framework, our starting point is the paper \cite{IRTU}, in which the authors collected the known (at the time) examples of symplectic manifolds which are, for instance, formal, but 
do not satisfy the Lefschetz property. Some questions remained open in \cite{IRTU}. Due to recent work of many authors (among them we quote Bock \cite{Bock} and Cavalcanti \cite{Cavalc1}), we are 
able 
to answer such questions, completing the picture started in \cite{IRTU}.

Formality is a topological property and it makes perfectly sense to ask whether a cosymplectic manifold is formal. The Lefschetz property, however, turns out to be much more delicate to deal with in 
the odd-dimensional setting. For this, we will restrict to the class of K-cosymplectic structures, i.e. almost coK\"ahler structures for which the Reeb field is Killing (see Section 
\ref{intro:cosymp}). It turns out that the product of a compact almost K\"ahler manifold and a circle is always K-cosymplectic. We will study the relationship between formality and the Lefschetz 
property on product K-cosymplectic manifolds.

\section{Preliminaries}

We collect in this section the basics in symplectic and cosymplectic geometry. We will also do a quick review of rational homotopy theory and formality. Finally, since many of the examples we 
provide are nilmanifolds and solvmanifolds, we shall discuss the main properties of these important classes of manifolds.

\subsection{Symplectic geometry}\label{intro:symp}

A \emph{symplectic manifold} is a pair $(M,\omega)$ where $M$ is a smooth manifold and $\omega$ is a closed, non-degenerate 2-form. Examples of symplectic manifolds are the vector space $\bR^{2n}$, 
the cylinder, the complex projective space $\bC P^n$ and, more generally, coadjoint orbits of semisimple Lie groups.

Recall that an almost Hermitian structure $(g,J)$ on an even-dimensional manifold $M$ consists of a Riemannian metric $g$ and an almost complex structure $J$. Such a structure is called \emph{almost 
K\"ahler} if the K\"ahler form $\omega\in\Omega^2(M)$, defined by $\omega(\cdot,\cdot)=g(\cdot, J\cdot)$, is closed; it is called \emph{K\"ahler} if, in addition, the almost complex structure $J$ is 
integrable. Notice that, if $(M,g,J)$ is an almost K\"ahler manifold, then $M$, together with the K\"ahler form $\omega$, is a symplectic manifold. 

\subsection{Cosymplectic geometry}\label{intro:cosymp}

A \emph{cosymplectic manifold}, as defined by Libermann \cite{Libermann}, is a triple $(M^{2n+1},\eta,\omega)$, where $\eta$ is a 1-form, $\omega$ is a 2-form, both are closed, and 
$\eta\wedge\omega^n\neq 0$. Note that a cosymplectic manifold is necessarily orientable. The product of a compact symplectic manifold and a circle has a natural cosymplectic structure.

An \emph{almost contact structure} on a manifold $M$ is a 4-tuple $(\phi,\eta,\xi,g)$, where
\begin{itemize}
\item a 1-form $\eta$ and a vector field $\xi$, the Reeb field, such that $\eta(\xi)=1$;
\item $\phi\colon TM\to TM$ is a tensor which satisfies $\phi^2=\mathrm{Id}-\eta\ox\xi$;
\item $g$ is a Riemannian metric, which is compatible with $\phi$, namely, $g(\phi\cdot,\phi\cdot)=g(\cdot,\cdot)-\eta(\cdot)\eta(\cdot)$.
\end{itemize}
The K\"ahler form $\omega$ of an almost contact metric structure $(\phi,\eta,\xi,g)$ is $\omega\in\Omega^2(M)$, defined by $\omega(\cdot,\cdot)=g(\cdot, \phi\cdot)$. The almost contact metric 
structure is 
\begin{itemize}
 \item \emph{almost coK\"ahler} if $d\eta=0=d\omega$;
 \item \emph{K-cosymplectic} if it is almost coK\"ahler and the Reeb field is Killing;
 \item \emph{coK\"ahler} if it is almost coK\"ahler and $N_\phi=0$, where $N_\phi$ is the Nijenhuis torsion of $\phi$ (see \cite{Blair}).
\end{itemize}
K-cosymplectic structures have been recently introduced by the first author and Goertsches in \cite{BG}. If $(\phi,\eta,\xi,g)$ is an almost coK\"ahler structure on $M$, then $M$ together 
with $\eta$ and the K\"ahler form $\omega$ is a cosymplectic manifold.

\begin{remark}
In the original works (see for instance \cite{Blair,CdLM}), the term \emph{cosymplectic} was used to denote what is called today coK\"ahler. The terminology coK\"ahler was introduced by Li in 
\cite{Li}, and seems to have become standard since then.
\end{remark}

\subsection{Formality}

The de Rham theorem asserts that the \emph{real} cohomological information of a smooth manifold $M$ can be recovered from the analysis of the smooth forms $\Omega^*(M)$. Apart from the additive 
structure, $\Omega^*(M)$ is endowed with two other operators:
\begin{itemize}
\item the de Rham differential $d\colon\Omega^k(M)\to\Omega^{k+1}(M)$;
\item the wedge product $\wedge\colon\Omega^k(M)\ox\Omega^{\ell}(M)\to\Omega^{k+\ell}(M)$;
\end{itemize}
these interact through the Leibnitz rule, which says that $d$ is a (graded) derivation with respect to the wedge product, i.e. $d(\alpha\wedge\beta)=d\alpha\wedge\beta+(-1)^{\deg\alpha}\alpha\wedge 
d\beta$.

In the seminal paper \cite{Sullivan}, and following previous work of Quillen, Sullivan showed that the infinitesimal nature of the wedge product could be used to extract a great deal of homotopical 
information of a space $X$ from the analysis of \emph{piecewise linear} differential forms $\Omega^*_{\PL}(X)$ on $X$. More precisely, from $\Omega^*_{\PL}(X)$ one can recover not only the rational 
cohomology $H^*(X;\bQ)$ of $X$, but also its rational homotopy groups $\pi_k(X)\ox\bQ$. For this to work, $X$ has to be a \emph{nilpotent} space. This means that $\pi_1(X)$ is a nilpotent group whose 
action on higher homotopy groups is nilpotent. Here and in the sequel, by \emph{space} we mean a CW-complex of finite type. We refer to \cite{FHT,FOT,Griffiths_Morgan} for all the results we quote in 
rational homotopy theory.

\begin{definition}
Let $\mathbf{k}$ be a field of zero characteristic. A \emph{commutative differential graded algebra over $\mathbf{k}$} ($\mathbf{k}$-cdga for short) is a graded vector space $A=\oplus_{n\geq 0}A^n$ 
together with a product $\cdot$ which is commutative in the graded sense, i.e. $x\cdot y=(-1)^{|x||y|}y\cdot x$, where $|x|$ is the degree of a homogeneous element $x$, and a $\mathbf{k}$-linear map 
$d\colon A^{n}\to A^{n+1}$ such that $d^2=0$, which is a graded derivation with respect to $\cdot$, i.e.
\begin{equation}\label{Leibnitz}
d(x\cdot y)=dx\cdot y+(-1)^{|x|}x\cdot dy.
\end{equation}
\end{definition}

Given two cdga's $(A,d)$ and $(A',d')$, a \emph{morphism} is a map $\varphi\colon A\to A'$ which preserves the degree and such that $d'\circ\varphi=\varphi\circ d$. A morphism of cgda's which induces 
an isomorphism in cohomology is a \emph{quasi isomorphism}. 

\begin{example} 
Here are a few examples of cdga's.
\begin{enumerate}
\item[(a)] Given a cdga $(A,d)$, its cohomology is a cgda with trivial differential.
\item[(b)] The de Rham algebra of a smooth manifold, endowed with the wedge product and the exterior differential, is an $\bR$-cdga.
\item[(c)] Piecewise linear differential forms $(\Omega^*_{\PL}(X),d)$ on a space $X$ are a $\bQ$-cdga.
\item[(d)] Given a Lie algebra $\fg$ defined over $\mathbf{k}$, the Chevalley-Eilenberg complex $(\bigwedge\fg^*,d)$ is a $\mathbf{k}$-cdga. Given a basis $\{e_1,\ldots,e_n\}$ of $\fg$ and its dual 
basis $\{e^1,\ldots, e^n\}$ of $\fg^*$, the differential of $e^{k}\in\fg^*$ is defined by $de^k(e_i,e_j)=-e^k([e_i,e_j])$, then extended to $\bigwedge\fg^*$ by imposing \eqref{Leibnitz}. That $d$ 
squares to zero is equivalent to the Jacobi identity in $\fg$.
\end{enumerate}
\end{example}

A very important class of examples of cdga's is provided by Sullivan minimal algebras:
\begin{definition}\label{minimality}
A \emph{Sullivan minimal algebra} is a cdga $(A,d)=(\bigwedge V,d)$ where
\begin{itemize}
\item $\bigwedge V$ is the free commutative algebra generated by a graded vector space $V=\oplus_{n\geq 0}V^n$;
\item there exists a basis $\{x_\tau\}_{\tau\in I}$ of generators of $V$, for some well-ordered index set $I$, such that $|x_\nu|\leq |x_\mu|$ for $\nu<\mu$ and $dx_\mu$ is expressed in term of the 
$x_\nu$ with $\nu<\mu$. In particular, $d$ has no linear part.
\end{itemize}
\end{definition}

The reason why Sullivan minimal algebras are important is given by the following result:

\begin{theorem}
Let $(A,d)$ be a $\mathbf{k}$-cdga, where $\mathrm{char}(\mathbf{k})\neq 0$. Then there exist a Sullivan minimal algebra $(\bigwedge V,d)$ and a morphism $\varphi\colon(\bigwedge V,d)\to(A,d)$ which 
induces 
an isomorphism on cohomology. $(\bigwedge V,d)$, which is unique up to automorphisms, is the minimal model of $(A,d)$.
\end{theorem}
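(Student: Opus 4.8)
The plan is to prove existence by an inductive, degree-by-degree construction of $(\bigwedge V,d)$ together with the morphism $\varphi$, and uniqueness by the lifting property that Sullivan algebras enjoy against surjective quasi-isomorphisms. Throughout one works under the hypothesis — implicit in the statement, and valid for the de Rham algebra of a connected manifold — that $A$ is connected, $H^0(A)=\mathbf{k}$.

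\emph{Existence.} I would construct an increasing chain of Sullivan minimal algebras $(\bigwedge V_{\le k},d)$ with morphisms $\varphi_k\colon(\bigwedge V_{\le k},d)\to(A,d)$ such that $H^j(\varphi_k)$ is an isomorphism for $j\le k$ and a monomorphism for $j=k+1$. For the base case, let $V^1$ have a basis that $\varphi_1$ sends to cocycles representing a basis of $H^1(A)$, with $d|_{V^1}=0$ (when $\pi_1$ is merely nilpotent, $V^1$ must itself be built up step by step, mirroring the lower central series). Given $\varphi_k$, I pass to degree $k+1$ by adjoining two families of degree-$(k+1)$ generators: generators $y_i$ with $dy_i=0$ and $\varphi(y_i)$ a chosen representing cocycle, one for each basis element of $\operatorname{coker}H^{k+1}(\varphi_k)$ (gaining surjectivity in degree $k+1$); and generators $x_j$ with $dx_j=z_j$, $\varphi(x_j)=a_j$, one for each basis element of $\ker H^{k+2}(\varphi_k)$, where $z_j\in(\bigwedge V_{\le k})^{k+2}$ is a closed representative and $a_j\in A^{k+1}$ satisfies $\varphi_k(z_j)=da_j$ (killing the spurious classes in degree $k+2$). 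One then checks that $\varphi_{k+1}$ is a cdga morphism — this is precisely the identity $\varphi_k(z_j)=da_j$ — and that $H^j(\varphi_{k+1})$ is an isomorphism for $j\le k+1$ and injective for $j=k+2$. Minimality holds automatically at each stage, since the differential of a new degree-$(k+1)$ generator lies in $(\bigwedge V_{\le k})^{k+2}$, which has no linear part because $V_{\le k}$ contains no generators of degree $k+2$; hence $dV\subset\bigwedge^{\ge 2}V$. Taking $V=\bigcup_k V_{\le k}$, $\varphi=\bigcup_k\varphi_k$, and a well-ordering of a homogeneous basis refining the degree filtration yields the minimal model of Definition~\ref{minimality}.

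\emph{Uniqueness.} Let $(\bigwedge V,d)$ and $(\bigwedge W,d)$ be two minimal models, with quasi-isomorphisms $\varphi$ and $\psi_0$ to $(A,d)$. After replacing $\varphi$ by a surjective quasi-isomorphism, which is harmless up to homotopy (mapping cylinder), the lifting property of the Sullivan algebra $\bigwedge W$ gives a morphism $\psi\colon(\bigwedge W,d)\to(\bigwedge V,d)$ with $\varphi\circ\psi$ homotopic to $\psi_0$. Since $H(\varphi)$ and $H(\varphi\circ\psi)=H(\psi_0)$ are isomorphisms, so is $H(\psi)$. Finally, a quasi-isomorphism between minimal Sullivan algebras is an isomorphism: filter both by word length — a filtration of cochain complexes precisely because $d$ has no linear part — so that $\psi$ induces a map of spectral sequences whose $E_1$ pages are the underlying free algebras with zero differential; $H(\psi)$ being an isomorphism forces the induced map $W\to V$ on indecomposables to be an isomorphism, and a morphism of connected free graded-commutative algebras bijective on indecomposables is itself bijective. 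Thus $\psi$ is an isomorphism, which is the asserted uniqueness up to automorphism.

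\emph{Main obstacle.} The crux is the very last step — that a quasi-isomorphism between minimal Sullivan algebras must be an isomorphism — which genuinely uses minimality through the word-length spectral sequence and fails without it. The other delicate point, glossed over above, is the degree-$1$ stage and the well-ordering when $\pi_1(X)$ is merely nilpotent rather than trivial: there $V^1$ has to be assembled by transfinite induction along the lower central series, and one should also assume $H^*(A)$ of finite type to keep $V$ finite-dimensional in each degree.
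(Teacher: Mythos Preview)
The paper does not prove this theorem at all: it is stated as a standard result of rational homotopy theory, with the proof deferred to the references \cite{FHT,FOT,Griffiths_Morgan}. Your sketch is essentially the classical argument one finds in those sources --- inductive construction of $V$ degree by degree (adjoining closed generators for $\operatorname{coker}H^{k+1}(\varphi_k)$ and generators killing $\ker H^{k+2}(\varphi_k)$), then uniqueness via the lifting property of Sullivan algebras together with the fact that a quasi-isomorphism between minimal Sullivan algebras is an isomorphism.

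Two minor remarks. First, the hypothesis printed in the paper, $\mathrm{char}(\mathbf{k})\neq 0$, is a typo: the theory requires characteristic zero, and your argument implicitly (and correctly) uses this --- for instance, in identifying $H^*(\bigwedge V,0)$ with $\bigwedge V$ on the $E_1$-page, and more fundamentally in the very freeness of the graded-commutative algebra. Second, your existence argument tacitly assumes the simply connected (or at least $V^1=0$) case when you say minimality holds ``automatically''; you flag this yourself in the final paragraph, and indeed for general connected $(A,d)$ with nontrivial $H^1$ one needs the transfinite refinement in degree~$1$ you allude to. With those caveats, the outline is sound and matches the standard treatment.
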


\begin{definition}
Let $X$ be a nilpotent space. The \emph{minimal model} of $X$ is the minimal model of the cdga $(\Omega^*_{\PL}(X),d)$. It is a $\bQ$-cdga, usually denoted by $(\bigwedge V_X,d)$.
\end{definition}

\begin{remark}
When $M$ is a smooth manifold, its \emph{real} minimal model is the minimal model of the de Rham algebra $(\Omega^*(M),d)$.
\end{remark}

Recall that the rationalization of a space $X$ is a rational space $X_\bQ$ (i.e. a space whose homotopy groups are rational vector spaces) together with a map $f\colon X\to X_\bQ$ inducing 
isomorphisms $\pi_k(X)\ox\bQ\cong\pi_k(X_\bQ)$. Two spaces $X$ and $Y$ have the same rational homotopy type if their rationalizations $X_\bQ$ and $Y_\bQ$ have the same homotopy type. Sullivan 
constructed a 1-1 correspondence between nilpotent rational spaces and isomorphism classes of Sullivan minimal algebras over $\bQ$, given by
\[
X\longleftrightarrow (\bigwedge V_X,d).
\]

In this sense, one can study rational homotopy types algebraically.

\begin{definition}\label{minimal}
Let $(A,d)$ be a cdga and let $(\bigwedge V,d)$ be its minimal model. $(A,d)$ is \emph{formal} if there exists a quasi isomorphism $(\bigwedge V,d)\to (H^*(A),0)$. A space $X$ is formal if its minimal 
model 
$(\bigwedge V_X,d)$ is.
\end{definition}

Since, by definition, the cohomology of $(\bigwedge V_X,d)$ is precisely $(H^*(X;\bQ),0)$, a space $X$ is formal if its minimal model (hence its rational homotopy type) is determined by its rational 
cohomology. There are many examples of formal spaces, among them K\"ahler and coK\"ahler manifolds, symmetric spaces, H-spaces.

\begin{remark}

In \cite{Kotschick}, Kotschick called a manifold {\em geometrically formal} if it carries a Riemannian metric for which all wedge 
products of harmonic forms are harmonic. One sees easily that a geometrically formal manifold is formal. Indeed, consider the map $(H^*(M), 0)\to (\Omega^*(M),d)$ which assigns to 
each cohomology class its unique harmonic representative (harmonic with respect to the metric which makes $M$ geometrically formal). Such map is then a morphism of cdga's, and is clearly a quasi  isomorphism. By general theory of minimal models (see for instance \cite[Chapter 1]{Oprea_Tralle}), one gets a quasi isomorphism $(\bigwedge V_M,d)\to (H^*(M), 0)$, hence $M$ is formal. There  are, however, examples of formal manifolds which are not geometrically formal (see \cite{KoTe}). Such examples are generalised symmetric spaces of compact simple Lie groups. It is unclear whether there exist examples of formal, not geometrically formal compact homogeneous non-symmetric spaces. Geometric formality influences the topology of the underlying manifold (see \cite{Ornea_Pilca}).
\end{remark}

Massey products are an obstruction to formality. We describe here triple Massey products and refer to \cite{FM2,Oprea_Tralle} for their higher order analogue. Let $M$ be a manifold and let $a_i\in 
H^{p_i}(M;\bR)$, $1\leq i\leq 3$, be three cohomology classes such that $a_1\cup a_2=0=a_2\cup a_3$. Take forms $\alpha_i$ on $M$ with $[\alpha_i]=a_i$ and write $\alpha_1\wedge\alpha_2=d\sigma$, 
$\alpha_2\wedge\alpha_3=d\tau$. The Massey product of these classes is
\[
 \langle a_1,a_2,a_3\rangle=[\alpha_1\wedge\tau+(-1)^{p_1+1}\sigma\wedge\alpha_3]\in \frac{H^{p_1+p_2+p_3-1}(M;\bR)}{a_1\cup H^{p_2+p_3-1}(M;\bR)+H^{p_1+p_2-1}(M;\bR)\cup a_3}\, .
\]

It was proven in \cite{DGMS} that all Massey products vanish on a formal manifold.

In the present paper we will also deal with spaces which are not nilpotent, for instance solvmanifolds (see Section \ref{nilm_solv} below). When it comes to them, we shall also address the question 
of whether they are formal spaces or not. When we ask such a question, we simply mean to ask whether the minimal model is a formal cdga, in the sense of Definition \ref{minimal}.

\subsection{\textit{s}-formality} \label{subsec:s-formal}

In \cite{FM2}, the second and third authors introduced the notion of
$s$-formality, which is a suitable weakening of the notion of 
formality, and prove that for compact oriented manifolds the weaker
notion implies the stronger one. First note the following result of
\cite{DGMS} which gives a characterization of formality.

\begin{theorem}\label{criterio1}
A minimal model $(A,d)$ is formal if and only if we
can write $A=\bigwedge V$ and the space $V$ decomposes as a
direct sum $V= C\oplus N$ with $d(C) = 0$, $d$ is injective on $N$
and such that every closed element in the ideal $I(N)$ generated
by $N$ in $\bigwedge V$ is exact.
\end{theorem}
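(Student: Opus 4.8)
The statement is an equivalence, and the two implications are of very different character: the ``if'' direction is an elementary manipulation, while the ``only if'' direction carries the genuine rational-homotopy content. For the easy direction, assume $A=\bigwedge V$ with $V=C\oplus N$, $d(C)=0$, $d$ injective on $N$, and every closed element of the ideal $I(N)$ exact. The plan is to exhibit a quasi-isomorphism $\rho\colon(\bigwedge V,d)\to(H^*(A),0)$ by hand: let $\rho$ be the algebra morphism sending each $c\in C$ to its cohomology class $[c]$ (legitimate since $d(C)=0$), sending $N$ to $0$, and extended multiplicatively. Then $\rho$ annihilates the whole ideal $I(N)$, and on $\bigwedge C$ it is simply ``take the cohomology class''.

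To see that $\rho$ is a cdga morphism it suffices, since $d$ is a derivation and the target has zero differential, to check $\rho\circ d=0$ on generators. This is trivial on $C$; for $x\in N$ I would use the tensor decomposition $\bigwedge V=\bigwedge C\ox\bigwedge N$ to write $dx=\omega_0+\omega_1$ with $\omega_0\in\bigwedge C$ and $\omega_1\in I(N)$. Then $0=d^2x=d\omega_0+d\omega_1$, and $d\omega_0=0$ because every generator of $C$ is closed, so $\omega_1$ is a closed element of $I(N)$, hence $\omega_1=d\beta$ by hypothesis; therefore $\omega_0=d(x-\beta)$ is exact and $\rho(dx)=[\omega_0]+\rho(\omega_1)=0$. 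For the quasi-isomorphism property I would run the same idea on cocycles: writing a cocycle $z=z_0+z_1$ with $z_0\in\bigwedge C$ and $z_1\in I(N)$, the identity $dz_0=0$ forces $dz_1=0$, so $z_1$ is exact and $[z]=[z_0]$ with $z_0$ a cocycle in $\bigwedge C$; since $\rho(z_0)=[z_0]=[z]$, the map induced by $\rho$ on cohomology is the identity. Hence $(A,d)$ is formal.

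For the hard direction, assume $(\bigwedge V,d)$ is formal, so it is quasi-isomorphic to $(H,0)$ with $H=H^*(\bigwedge V)$, and is therefore itself a minimal model of the cdga $(H,0)$. The plan is to build a preferred minimal model of $(H,0)$ that carries the wanted decomposition and then invoke uniqueness of minimal models. Concretely I would run the standard step-by-step construction of a minimal model for a cdga with zero differential: first adjoin a space $W_0$ of closed generators so that the structure map $\bigwedge W_0\to H$ is surjective; then inductively adjoin generators $W_i$ ($i\geq1$) on which the differential is injective and whose differentials kill exactly the superfluous cohomology created at the previous stage, extending the structure map to $(H,0)$ by $0$ on each $W_i$. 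This yields the (Halperin--Stasheff) bigraded minimal model $(\bigwedge W,\delta)$ with $W=W_0\oplus\bigl(\bigoplus_{i\geq1}W_i\bigr)$. Setting $C:=W_0$ and $N:=\bigoplus_{i\geq1}W_i$, one has $\delta(C)=0$, and $\delta|_N$ is injective since the auxiliary (lower) grading separates the $W_i$ and $\delta$ is injective on each; moreover the canonical quasi-isomorphism $(\bigwedge W,\delta)\to(H,0)$ vanishes on $N$, hence on $I(N)$, so every closed element of $I(N)$ has trivial class and is exact. By uniqueness of the minimal model (and compatibility of the identifying isomorphism with the bigrading), $(\bigwedge V,d)$ inherits a decomposition $V=C\oplus N$ with the three required properties. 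As a sanity check on the shape of the statement, note that in any valid decomposition $C$ must in fact equal the span of the closed generators: if a generator is closed, its $N$-component has zero differential and so vanishes by injectivity of $d|_N$.

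The main obstacle is precisely the construction of this bigraded model in the ``only if'' direction — the one place where formality is used in an essential way. One must verify at each stage that $\delta$ can be chosen injective on the newly adjoined generators and that the structure map can be consistently set to zero on them while the whole map stays a quasi-isomorphism, and then that an isomorphism between two minimal models of $(H,0)$ can be taken to respect the bigrading. By contrast, the ``if'' direction and the transfer from the bigraded model to $(\bigwedge V,d)$ are formal manipulations resting on the tensor decomposition $\bigwedge V=\bigwedge C\ox\bigwedge N$ together with the hypothesis that closed elements of $I(N)$ are exact.
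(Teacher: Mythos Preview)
The paper does not prove this theorem; it is quoted as a result of \cite{DGMS}, so there is no argument in the paper to compare yours against beyond that attribution.

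Your approach is essentially the one in the cited reference. The ``if'' direction is correct and cleanly executed: the algebra map $\rho$ sending $C$ to cohomology classes and $N$ to zero is well defined, satisfies $\rho\circ d=0$ by your decomposition argument, and induces the identity on cohomology. For the ``only if'' direction you correctly point to the bigraded minimal model of $(H^*(A),0)$ as the key construction. The only remark I would make is that your parenthetical about ``compatibility of the identifying isomorphism with the bigrading'' is unnecessary and slightly misleading --- an isomorphism of minimal models need not respect any auxiliary grading, but since the statement allows one to \emph{choose} the presentation $A=\bigwedge V$, you may simply replace $V$ by the pullback of the generating space $W$ along the isomorphism and read off the decomposition verbatim. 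The inductive construction of the bigraded model itself, with $\delta$ injective on each $W_i$ ($i\geq 1$) and the structure map to $(H,0)$ vanishing on $W_{\geq 1}$, is the genuine content you defer to the literature, and that is exactly what \cite{DGMS} supplies.
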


In \cite{FM2} we weaken the condition of {formality} as follows.

\begin{definition}\label{primera}
We say that a minimal model $(A,d)$ is {\it
$s$-formal} if we can write $A=\bigwedge V$ such that for each $i\leq s$
the space $V^i$ of generators of degree $i$ decomposes as a direct
sum $V^i=C^i\oplus N^i$, where the spaces $C^i$ and $N^i$ satisfy
the following three conditions:
\begin{enumerate}
\item $d(C^i) = 0$,
\item the differential map $d\colon N^i\rightarrow \bigwedge V$ is
injective,
\item any closed element in the ideal
$I(\bigoplus\limits_{i\leq s} N^i)$, generated by
$\bigoplus\limits_{i\leq s} N^i$ in $\bigwedge
(\bigoplus\limits_{i\leq s} V^i)$, is exact in $\bigwedge V$.
\end{enumerate}
\end{definition}

A connected manifold is $s$-formal if its minimal model is $s$-formal.
The main result of~\cite{FM2} is:

\begin{theorem} \label{criterio2}
Let $M$ be a connected and orientable compact differentiable
manifold of dimension $2n$, or $(2n-1)$. Then $M$ is formal if and
only if it is $(n-1)$-formal.
\end{theorem}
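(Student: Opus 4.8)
The plan is to reduce the statement to the decomposition criterion of Theorem~\ref{criterio1}, and then to analyse the minimal model $(\bigwedge V_M,d)$ of $M$ degree by degree, using Poincar\'e duality to transfer the problem from the middle degrees to the top degree. One implication is immediate: if $M$ is formal then, by Theorem~\ref{criterio1}, we may write $V_M=C\oplus N$ with $d(C)=0$, $d$ injective on $N$, and every closed element of the ideal $I(N)$ exact. Setting $C^i=C\cap V_M^i$ and $N^i=N\cap V_M^i$ yields, for every $s$, a splitting of $\bigoplus_{i\le s}V_M^i$ satisfying conditions (1) and (2) of Definition~\ref{primera}; condition (3) holds because $I(\bigoplus_{i\le s}N^i)\subseteq I(N)$, so a closed element of the former ideal is a closed element of $I(N)$, hence exact. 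Thus $M$ is $s$-formal for every $s$, in particular $(n-1)$-formal. So the content of the theorem is the converse.

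For the converse I would assume $M$ is $(n-1)$-formal with the splitting $V_M^i=C^i\oplus N^i$ for $i\le n-1$, and extend it by setting $C^i=\ker d\cap V_M^i$ and choosing $N^i$ to be any complement for $i\ge n$; conditions (1) and (2) then hold in all degrees (and one checks, using that $d$ has no linear part, that $C^i=\ker d\cap V_M^i$ already for $i\le n-1$). Write $C=\bigoplus_i C^i$ and $N=\bigoplus_i N^i$. By Theorem~\ref{criterio1} it then suffices to prove that every closed homogeneous $a\in I(N)$ is exact, and I would split into cases according to $p=\deg a$. If $p>\dim M$ there is nothing to prove, since $H^p(\bigwedge V_M)\cong H^p(M;\bR)=0$. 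If $p\le n-1$, then because $d$ sends each generator to a polynomial in generators of no larger degree, the subalgebra $\bigwedge(\bigoplus_{i\le n-1}V_M^i)$ is a sub-cdga, every element of degree $\le n-1$ lies in it, and a degree-$p$ element of $I(N)$ in fact lies in the ideal generated there by $\bigoplus_{i\le n-1}N^i$; condition (3) of $(n-1)$-formality then makes $a$ exact.

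For $n\le p<\dim M$ I would invoke Poincar\'e duality. Suppose $a$ is not exact, i.e.\ $[a]\ne 0$ in $H^p(M;\bR)$; since $M$ is compact and orientable, the cup product pairing $H^p(M;\bR)\otimes H^{\dim M-p}(M;\bR)\to H^{\dim M}(M;\bR)\cong\bR$ is nondegenerate, so there is a closed element $b$ of the minimal model with $[a\cdot b]\ne 0$. Then $a\cdot b$ is closed, non-exact, belongs to $I(N)$ because $I(N)$ is an ideal, and has degree $\dim M$. So the whole argument reduces to the case $p=\dim M$.

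The hard part will be the case $p=\dim M$. What makes it work is the inequality $\dim M\le 2n$: in the decomposition $\bigwedge V_M=\bigwedge(\bigoplus_{i\le n-1}V_M^i)\otimes\bigwedge(\bigoplus_{i\ge n}V_M^i)$, every monomial appearing in a closed $c\in I(N)$ of degree $\dim M$ is at most quadratic in the generators of degree $\ge n$, and if it is quadratic in them it uses only generators of degree exactly $n$; moreover $d$ sends a generator of degree $n$ into $\bigwedge^{\ge 2}(\bigoplus_{i\le n-1}V_M^i)$. Using closedness of $c$, I would remove the generators of degree $\ge n$ one at a time: each such generator can be traded, via the Leibniz rule together with the fact---itself a consequence of $(n-1)$-formality and the sub-cdga remark above---that every cohomology class of degree $\le n-1$ has a representative in $\bigwedge C$, for an exact term plus a closed element with strictly fewer generators of degree $\ge n$. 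Iterating, $c$ becomes, up to an exact element, a closed element of the ideal generated by $\bigoplus_{i\le n-1}N^i$, which is exact by condition (3). Carrying out this reduction cleanly---keeping track of the parities of the high-degree generators and of the well-ordering in Definition~\ref{minimality}---is the technical heart of the proof, and it is exactly here that the hypothesis $\dim M\in\{2n-1,2n\}$, rather than an arbitrary dimension, is used.
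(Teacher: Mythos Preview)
This theorem is not proved in the present paper at all: it is stated as ``the main result of~\cite{FM2}'' and the reader is sent there for the argument. So there is no in-paper proof to compare your proposal against.

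That said, your outline is precisely the strategy of~\cite{FM2}. The easy implication, the extension of the splitting $V^i=C^i\oplus N^i$ to degrees $i\ge n$, the handling of the ranges $p\le n-1$ and $p>\dim M$, and the Poincar\'e-duality reduction of the intermediate degrees to the single case $p=\dim M$ are all exactly as in the original. You have also correctly isolated why the hypothesis $\dim M\in\{2n-1,2n\}$ enters: a monomial of top degree can contain at most two generators of degree $\ge n$, and two only when both have degree exactly $n$ and $\dim M=2n$.

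One caution about the last step. After your inductive peeling of the high-degree generators, the resulting element need not literally lie in the ideal $I\big(\bigoplus_{i\le n-1}N^i\big)$ as you assert; a priori it is only an element of $\bigwedge\big(\bigoplus_{i\le n-1}V^i\big)$, and one must still separate off its $\bigwedge C$-component and argue that this component is itself exact in top degree. In~\cite{FM2} this is handled by first establishing the auxiliary surjectivity of $\bigwedge\big(\bigoplus_{i\le n-1}C^i\big)\to H^{\le n}(\bigwedge V,d)$ and then running the induction on the well-ordering with some care in the quadratic case. You gesture at the right mechanism and honestly flag it as ``the technical heart,'' but your proposal stops short of carrying it out; as written it is a correct plan rather than a proof.
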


\subsection{Nilmanifolds and solvmanifolds}\label{nilm_solv}

For us, nilmanifolds and solvmanifolds are compact homogeneous spaces of nilpotent and solvable Lie groups. General references are \cite{Auslander,Oprea_Tralle}.

\begin{definition}
 A \emph{(compact) nilmanifold} is the quotient of a simply connected nilpotent Lie group $G$ by a lattice $\Gamma$.
\end{definition}

\begin{definition}
 A \emph{(compact) solvmanifold} is the quotient of a simply connected solvable Lie group $G$ by a lattice $\Gamma$.
\end{definition}

Notice that every nilpotent group is solvable, hence every nilmanifold is a solvmanifold, but the converse is not true. A solvable group $G$ is completely solvable if the adjoint representation on 
$\fg$ has only real eigenvalues. Every nilpotent Lie group is completely solvable.

A connected, simply connected nilpotent Lie group is diffeomorphic to $\bR^n$ 
for some $n$; the diffeomorphism is given by the exponential map. Also, a simply connected solvable Lie group is homeomorphic to $\bR^n$ for some $n$. Hence a solvmanifold $S=\Gamma\backslash G$ (in 
particular, a nilmanifold) is an aspherical space with $\pi_1(S)=\Gamma$.

According to a theorem of Mal'\v cev (see \cite{Mal}), a simply connected nilpotent Lie group $G$ admits a lattice $\Gamma$ if and only if there exists a basis of $\fg$ such that the structure 
constants are rational numbers. So far, a statement of this flavor is not known for simply connected solvable Lie groups. A necessary condition for a Lie group to admit a compact quotient is 
unimodularity. The construction of lattices in solvable Lie groups is an active area of research (see for instance \cite{Bock,Console_Macri}).

From the cohomological point of view, completely solvable solvmanifolds are particularly simple:

\begin{theorem}[Nomizu, \cite{Nomizu} and Hattori, \cite{Hattori}]\label{Nomizu_Hattori_1}
 Let $S=\Gamma\backslash G$ be a completely solvable solvmanifold and let $(\bigwedge\fg^*,d)$ be the Chevalley-Eilenberg complex of $\fg$. Then the inclusion $(\bigwedge\fg^*,d)\hookrightarrow 
(\Omega^*(S),d)$ is a quasi isomorphism.
\end{theorem}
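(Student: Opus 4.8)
The plan is to induct on $\dim G$, exhibiting $S$ as an iterated mapping torus and comparing, at each stage, a geometric Wang exact sequence with an algebraic one; complete solvability will enter only through a single elementary remark about eigenvalues. First I would note that $\iota$ is indeed a morphism of cdga's: a left-invariant form on $G$ is $\Gamma$-invariant, hence descends to $S$, and the de Rham differential of a left-invariant form is left-invariant and agrees there with the Chevalley-Eilenberg differential --- so the claim is precisely that $\iota^*$ is an isomorphism in cohomology. The base case is $G$ abelian (in particular $\dim G\le 1$), where $S$ is a torus and the statement is classical. For the inductive step, since $\fg$ is non-abelian solvable, $[\fg,\fg]$ is a proper ideal and $\fg/[\fg,\fg]$ is a nonzero abelian algebra; choosing, compatibly with $\Gamma$, a codimension-one ideal $\fh\supseteq[\fg,\fg]$ and writing $\fg=\fh\rtimes\bR X$, I set $H=\exp\fh$, a closed normal simply connected subgroup, itself completely solvable. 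Here I would invoke Mostow's structure theory of solvmanifolds (see, e.g., \cite{Oprea_Tralle}) --- the step that uses compactness of $S$ --- to conclude that $\Gamma_H:=\Gamma\cap H$ is a lattice in $H$, that $\Gamma H$ is closed in $G$, and that $S$ is the mapping torus of the diffeomorphism $\psi\colon S_H\to S_H$ induced by conjugation by a generator $\gamma_0$ of $\Gamma/\Gamma_H\cong\bZ$, where $S_H:=\Gamma_H\backslash H$ is a completely solvable solvmanifold of smaller dimension; after rescaling $X$ we may take $\gamma_0=h_0\exp X$ with $h_0\in H$.

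Next I would write down two Wang long exact sequences. The mapping-torus structure gives
\[
\cdots\to H^k(S)\to H^k(S_H)\xrightarrow{\ \psi^*-\mathrm{id}\ }H^k(S_H)\to H^{k+1}(S)\to\cdots .
\]
Since $[\fg,\fg]\subseteq\fh$ one has $dX^*=0$, so $X^*\wedge\bigwedge\fh^*$ is a subcomplex of $\bigwedge\fg^*$ whose quotient complex is the Chevalley-Eilenberg complex of $\fh$, and the associated long exact sequence reads
\[
\cdots\to H^k(\fg)\to H^k(\fh)\xrightarrow{\ D_*\ }H^k(\fh)\to H^{k+1}(\fg)\to\cdots ,
\]
where $D_*$ is the operator induced on $H^*(\fh)$, up to sign, by the Lie derivative $L_X$ --- which preserves $\bigwedge\fh^*$ because $\fh$ is an ideal. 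By the inductive hypothesis applied to $S_H$ there is an isomorphism $H^*(\fh)\xrightarrow{\ \sim\ }H^*(S_H)$, and I would check that under it the monodromy $\psi^*$ becomes $\exp(D_*)$: the inner automorphism $\mathrm{Ad}_{h_0}$ of $H$ acts trivially on $H^*(\fh)$, while $\mathrm{Ad}_{\exp X}=\exp(\mathrm{ad}_X|_\fh)$ induces $\exp(L_X)=\exp(\pm D_*)$ on $H^*(\fh)$.

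The crux --- the only place complete solvability is used --- is then the following elementary remark: since $\fg$ is completely solvable, $\mathrm{ad}_X|_\fh$, hence $D_*$, has only real eigenvalues, and for an operator $A$ with only real eigenvalues $\exp A-\mathrm{id}$ has the same kernel dimension and the same cokernel dimension as $A$ (on a generalised eigenspace for a nonzero real $\lambda$ one has $e^\lambda-1\neq0$; on the generalised $0$-eigenspace $\exp A-\mathrm{id}$ equals $A$ composed with a unipotent automorphism). A nonzero purely imaginary eigenvalue of $\mathrm{ad}_X$, which is allowed when $\fg$ is merely solvable, could by resonance create an eigenvalue $1$ of $\psi^*=\exp(D_*)$ that $D_*$ does not detect --- which is exactly why the statement fails for general solvmanifolds. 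Granting the remark, the two Wang sequences match degree by degree, giving $\dim H^k(\fg)=\dim H^k(S)$ for all $k$. Since moreover $\iota^*$ is injective --- $G$ being unimodular as it carries a lattice, $\bigwedge^{\dim G}\fg^*$ is spanned by an invariant volume form $\nu$ with $\iota^*[\nu]\neq0$ by Stokes' theorem, and, $\iota^*$ being a ring homomorphism and both cohomology algebras satisfying Poincar\'e duality, any nonzero $a\in H^*(\fg)$ pairs to $[\nu]$ with some class $b$, so $\iota^*a\cup\iota^*b\neq0$ --- an injective linear map between finite-dimensional spaces of equal dimension is an isomorphism, and the induction closes. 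I expect the main obstacles to be the Mostow-type input (that $\Gamma\cap H$ is a lattice in $H$ and that $S$ is the corresponding mapping torus) together with the bookkeeping identifying the algebraic connecting map with $D_*$ and the monodromy with $\exp(D_*)$; the remaining steps are formal, and complete solvability is needed only for the one-line eigenvalue remark.
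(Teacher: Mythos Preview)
The paper does not prove this theorem: it is stated with attribution to Nomizu \cite{Nomizu} and Hattori \cite{Hattori} and then used throughout as a standard input, so there is no argument in the paper to compare your proposal against.

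That said, your outline is a correct version of one of the classical proofs. The induction via a codimension-one ideal $\fh\supseteq[\fg,\fg]$, the mapping-torus Wang sequence on the geometric side versus the short exact sequence $0\to X^*\wedge\bigwedge\fh^*\to\bigwedge\fg^*\to\bigwedge\fh^*\to 0$ on the algebraic side, and the identification of the connecting map with $(L_X)_*$ are all accurate. Your isolation of the single place where complete solvability is used --- the spectrum of $\mathrm{ad}_X|_\fh$ is real, hence $\exp(D_*)-\mathrm{id}$ and $D_*$ have the same kernel and cokernel on each $H^k(\fh)$ --- is exactly the point, and your generalized-eigenspace argument for it is correct. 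The endgame via unimodularity, Poincar\'e duality for $H^*(\fg)$, and injectivity of $\iota^*$ is clean; an equally standard alternative is to check that $\iota^*$ intertwines the two long exact sequences and invoke the five lemma, which sidesteps the separate duality step. The caveats you flag yourself --- the Mostow-type input that $\Gamma\cap H$ is a lattice in $H$ and that $S$ is the corresponding mapping torus, and the verification that the monodromy acts on $H^*(\fh)$ as $\exp(\mathrm{ad}_X)$ modulo the inner part --- are genuine but routine, and are exactly what the references \cite{Hattori} and \cite{Oprea_Tralle} supply.
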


Nilmanifolds are particularly tractable with rational homotopy theoretical methods.

\begin{theorem}[Hasegawa, \cite{Hasegawa}]
 Let $N=\Gamma\backslash G$ be a nilmanifold. Then the Chevalley-Eilenberg complex $(\bigwedge\fg^*,d)$ of $\fg$ is the minimal model of $N$.
\end{theorem}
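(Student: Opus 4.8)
The plan is to verify directly that the Chevalley--Eilenberg complex $(\bigwedge\fg^*,d)$ is a Sullivan minimal algebra and then to identify it with the minimal model of $N$ by a cohomology comparison. First note that $N=\Gamma\backslash G$ is aspherical with $\pi_1(N)=\Gamma$ finitely generated torsion-free nilpotent and $\pi_k(N)=0$ for $k\ge 2$, so $N$ is a nilpotent space and does have a Sullivan minimal model $(\bigwedge V_N,d)$.

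For minimality I would use the lower central series $\fg=\fg_1\supseteq\fg_2\supseteq\cdots\supseteq\fg_s\supseteq\fg_{s+1}=0$, with $\fg_{i+1}=[\fg,\fg_i]$ (finite by nilpotency), together with the elementary estimate $[\fg_i,\fg_j]\subseteq\fg_{i+j}$. Pick complements $\fg_i=U_i\oplus\fg_{i+1}$, so that $\fg=U_1\oplus\cdots\oplus U_s$ with $\fg_i=U_i\oplus\cdots\oplus U_s$, and take the basis $e_1,\dots,e_n$ of $\fg$ obtained by listing bases of $U_1,\dots,U_s$ in this order, with dual basis $e^1,\dots,e^n$. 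Since $de^k(e_a,e_b)=-e^k([e_a,e_b])$, a nonzero coefficient of $e^a\wedge e^b$ in $de^k$ with $e_a\in U_p$, $e_b\in U_q$ forces the $e_k$-component of $[e_a,e_b]\in\fg_{p+q}$ to be nonzero; if $e_k$ lies in the block $U_m$ this gives $p+q\le m$, hence $p,q\le m-1$, so $e_a,e_b$ lie in blocks preceding $U_m$ and in particular $a,b<k$. Thus $de^k$ is expressed through generators of strictly smaller index, all generators have degree $1$ (so $d$ has no linear part, as $de^k\in\bigwedge^2\fg^*$), and Definition \ref{minimality} is satisfied.

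It remains to see that $(\bigwedge\fg^*,d)$ computes the rational homotopy type of $N$. Over $\bR$ this is immediate from the Nomizu--Hattori Theorem \ref{Nomizu_Hattori_1} (a nilpotent Lie group is completely solvable): the inclusion $(\bigwedge\fg^*,d)\hookrightarrow(\Omega^*(N),d)$ is a quasi isomorphism, so by minimality and uniqueness of minimal models $(\bigwedge\fg^*,d)$ is the real minimal model of $N$, that is, $(\bigwedge\fg^*,d)\cong(\bigwedge V_N,d)\otimes_\bQ\bR$. To descend to $\bQ$, use Mal'\v cev's theorem \cite{Mal}: $\fg$ has a basis with rational structure constants, giving a rational form $\fg_\bQ$ with $\fg=\fg_\bQ\otimes_\bQ\bR$; the lower central series and the complements $U_i$ can be chosen over $\bQ$, so the same computation shows $(\bigwedge\fg_\bQ^*,d)$ is a Sullivan minimal $\bQ$-algebra with $(\bigwedge\fg_\bQ^*,d)\otimes_\bQ\bR\cong(\bigwedge\fg^*,d)$. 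Finally one invokes the classical fact (see \cite{Sullivan}, \cite{FHT}, \cite{Oprea_Tralle}) that a minimal Sullivan $\bQ$-algebra whose extension of scalars to $\bR$ is the real minimal model of a space $X$ with $H^*(X;\bQ)$ of finite type must be the rational minimal model of $X$; applied to $X=N$ this gives $(\bigwedge\fg_\bQ^*,d)\cong(\bigwedge V_N,d)$.

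The one genuinely delicate point is this last descent, $\bR/\bQ$ being non-algebraic; it ultimately rests on the triviality of torsors under pro-unipotent $\bQ$-groups. One can also sidestep it by an induction on $s$: with $Z=\fg_s$ central, $\Gamma\cap\exp(Z)$ is a lattice in $\exp(Z)$ and $N$ is a principal $T^{\dim Z}$-bundle over the nilmanifold $N'$ of $G/\exp(Z)$; inductively $(\bigwedge(\fg')^*,d)$ is the rational minimal model of $N'$, and adjoining one degree-$1$ generator per Euler class, with differential the corresponding closed element of $\bigwedge^2(\fg')^*$, is a minimal Hirsch extension realizing $(\bigwedge\fg^*,d)$ as the rational minimal model of $N$.
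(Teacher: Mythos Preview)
The paper does not give its own proof of this theorem; it is simply stated with a citation to Hasegawa \cite{Hasegawa}, followed only by the remark that the existence of a lattice forces $(\bigwedge\fg^*,d)$ to be defined over $\bQ$. So there is nothing in the paper to compare your argument against.

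Your sketch is correct and is essentially Hasegawa's original argument: choose a basis of $\fg^*$ adapted to the lower central series (a Mal'\v cev basis), observe that this makes $(\bigwedge\fg^*,d)$ a Sullivan minimal algebra because each $de^k$ lands in $\bigwedge^2$ of the span of earlier generators, and then invoke Nomizu's theorem to get the quasi-isomorphism $(\bigwedge\fg^*,d)\hookrightarrow(\Omega^*(N),d)$. Your treatment of the ordering via $[\fg_p,\fg_q]\subseteq\fg_{p+q}$ is clean and accurate.

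You are also right to flag the $\bR$-to-$\bQ$ descent as the one genuinely delicate point, and your two proposed resolutions are both sound. Note that many sources (including this paper, implicitly) are content with the real statement, since that is what is used in applications; the rational refinement really does require either the torsor/unipotent argument you mention or the inductive Hirsch-extension argument via the central torus bundle $T^{\dim\fg_s}\to N\to N'$. One small caution for the latter: to match the rational model on the nose you should take the $\bQ$-structure on $\fg$ determined by the lattice $\Gamma$ (via $\log\Gamma$), not an arbitrary rational form promised by Mal'\v cev's theorem, so that the Euler classes of the torus bundle lie in $H^2(N';\bQ)$ and agree with the differentials of the new generators.
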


We point out that, since $G$ admits a lattice by hypothesis, $(\bigwedge\fg^*,d)$ is a $\bQ$-cdga. Hasegawa also proved:
\begin{theorem}[Hasegawa, \cite{Hasegawa}]\label{formal_nilmanifolds}
A nilmanifold $N$ is formal if and only if it is diffeomorphic to a torus.
\end{theorem}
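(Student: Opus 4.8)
The plan is to treat the two implications separately, reducing the nontrivial one to the statement that a non‑abelian nilpotent Lie algebra has a non‑formal Chevalley--Eilenberg algebra. The ``if'' direction is immediate: formality is a homotopy invariant, and the torus is formal (it is K\"ahler, or a finite product of circles, or an $H$‑space), so any manifold diffeomorphic to a torus is formal.

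For the converse, write $N=\Gamma\backslash G$ with $\fg$ nilpotent; by Hasegawa's theorem that $(\bigwedge\fg^*,d)$ is the minimal model of $N$, this model is generated in degree $1$. Assume $N$ is formal and, for contradiction, that $\fg$ is not abelian, i.e. $d\not\equiv 0$ on $\fg^*$. Applying Theorem~\ref{criterio1} with $V=V^1=\fg^*$ yields a splitting $\fg^*=C\oplus W$ with $d(C)=0$, $d$ injective on $W$, and every closed element of the ideal $I(W)$ exact. Since $d$ is injective on $W$ and vanishes on $C$, the space $C$ must be exactly the space of closed $1$‑forms, that is, the annihilator of $[\fg,\fg]$; hence $W\neq 0$ and $\dim W=\dim[\fg,\fg]$. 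Using nilpotency, I would pick an ordered basis $x_1,\dots,x_n$ of $\fg^*$ dual to a Mal'cev basis adapted to the descending central series, so that $dx_j\in\bigwedge^2\langle x_1,\dots,x_{j-1}\rangle$ for all $j$, and moreover $x_1,\dots,x_r$ is a basis of $C$ and $x_{r+1},\dots,x_n$ a basis of $W$ (both adjustments are compatible, since the filtration by $(\fg^k)^\circ$ starts with $(\fg^1)^\circ=C$). Then $m:=r+1$ is the least index with $dx_m\neq 0$, and $dx_m$ is a nonzero element of $\bigwedge^2 C$ --- a nonzero exact $2$‑form that is a polynomial in closed generators alone.

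The last step is to contradict formality by exhibiting a nonvanishing Massey product, equivalently a closed non‑exact element of $I(W)$, built from $x_m$; this is impossible on a formal space by the theorem of \cite{DGMS}. Put $dx_m$ in normal form $dx_m=y_1\wedge z_1+\dots+y_s\wedge z_s$ with $s\geq 1$ and $y_i,z_i\in C$. In the model case $s=1$ --- with the Heisenberg manifold ($dx_3=x_1\wedge x_2$) as prototype --- one has $[y_1]\cup[z_1]=[dx_m]=0$ and $[z_1]\cup[z_1]=0$, so $\langle[y_1],[z_1],[z_1]\rangle$ is defined; unwinding the definition (with $\alpha_1\wedge\alpha_2=y_1\wedge z_1=dx_m$, so $\sigma=x_m$, and $\alpha_2\wedge\alpha_3=z_1\wedge z_1=0$, so $\tau=0$) it is represented by $[x_m\wedge z_1]$. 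This form is closed, since $d(x_m\wedge z_1)=dx_m\wedge z_1=y_1\wedge z_1\wedge z_1=0$; it lies in $I(W)$ because $x_m\in W$; and it is not exact, because minimality forces the exact $2$‑forms to be exactly the linear span of $dx_{r+1},\dots,dx_n$, which in the prototype does not contain $x_m\wedge z_1$. Since the indeterminacy $[y_1]\cup H^1(N)+H^1(N)\cup[z_1]$ also vanishes there, the Massey product is nonzero, contradicting formality.

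The main obstacle is the general case $s\geq 2$: then $x_m\wedge z_1$ need not be closed and $[y_1]\cup[z_1]$ need not vanish, so that Massey product is not even defined, and beyond the first nonzero differential the $1$‑forms appearing in later $dx_j$ need not be closed, so Massey products cannot be built carelessly. I would resolve this either (i) by naturality of Massey products under the cdga surjection from $(\bigwedge\fg^*,d)$ onto its quotient by the $d$‑stable ideal generated by $y_2,z_2,\dots,y_s,z_s$ together with all closed generators not occurring in $dx_m$ --- this quotient being precisely the Heisenberg model that carries the nonzero Massey product above --- and arguing that the product lifts to a defined, nonzero one upstairs; or (ii) by inducting on $\dim\fg$ using the central circle fibration $S^1\to N\to N'$ attached to a rational central vector of $\fg$, whose minimal model is that of $N'$ with one adjoined degree‑$1$ generator $x$ and $dx$ the Euler class, and showing that formality of $N$ forces both formality of $N'$ and vanishing of the Euler class, whence $\fg'$ is abelian by induction and then $\fg$ is abelian. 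In either route the delicate point --- and the real difficulty --- is controlling the indeterminacy of the Massey product (respectively, forcing the Euler class to vanish), for which one combines minimality with Poincar\'e duality for the compact orientable nilmanifold $N$.
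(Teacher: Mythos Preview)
The paper does not supply its own proof of this theorem; it merely cites Hasegawa. So there is no ``paper's proof'' to compare with, and your proposal must stand on its own.

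Your setup is correct: the minimal model is $(\bigwedge\fg^*,d)$ with $V=V^1=\fg^*$, and Theorem~\ref{criterio1} gives a splitting $\fg^*=C\oplus W$ with $C=\ker(d|_{\fg^*})$ and $W\neq 0$ whenever $\fg$ is non-abelian. You also correctly isolate the goal: exhibit a closed, non-exact element of the ideal $I(W)$. The gap is that you then take an unnecessarily hard route via explicit Massey products, and you do not close it: your argument for $s=1$ only verifies non-exactness in the Heisenberg prototype (for larger $\fg$ the form $x_m\wedge z_1$ may well appear in some $dx_j$ with $j>m$), and for $s\geq 2$ neither of your proposed fixes is carried out. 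In approach~(i), naturality of Massey products goes the wrong way --- a nonzero product in a quotient does not produce a \emph{defined} product upstairs, let alone a nonzero one --- and in approach~(ii) you would need that formality of the total space of a principal $S^1$-bundle forces formality of the base and vanishing of the Euler class, which is not a general fact and needs its own argument.

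There is a one-line finish that avoids all of this. Take the top form
\[
\mathrm{vol}=x_1\wedge\cdots\wedge x_n\in\textstyle\bigwedge^n\fg^*.
\]
Since $W\neq 0$, choose $0\neq w\in W$ and extend to a basis; then $\mathrm{vol}$ is a nonzero multiple of $w\wedge(\text{rest})$, so $\mathrm{vol}\in I(W)$. It is closed because it has top degree. It is non-exact because $\fg$ is nilpotent, hence every $\mathrm{ad}_X$ is nilpotent and has trace zero, so $d\equiv 0$ on $\bigwedge^{n-1}\fg^*$; equivalently, $N$ is a compact orientable manifold and $H^n(N;\bR)\cong\bR$ by Nomizu's theorem. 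Thus $\mathrm{vol}$ is a closed non-exact element of $I(W)$, contradicting Theorem~\ref{criterio1}. This is the standard proof (see \cite{Hasegawa} or \cite[Chapter~2]{Oprea_Tralle}); your detour through Massey products is not needed.
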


\begin{remark}
 If $S=\Gamma\backslash G$ is a completely solvable solvmanifold, the result of Hattori implies that 
 $(\bigwedge\fg^*,d)$ is a model for $(\Omega^*(S),d)$, meaning that the inclusion is a quasi 
isomorphism. 
\end{remark}

\begin{example}\label{Heisenberg}
 We give an example of a non-toral nilmanifold. Consider the Heisenberg group
 \[
  H=\left\{\begin{pmatrix}
     1 & y & z\\0 & 1 & x\\0 & 0 & 1
    \end{pmatrix} \ | \ x,y,z\in\bR\right\}
    \]
and the lattice $\Gamma=\{A\in H \ | \ x,y,z\in\bZ\}$. Then $N=\Gamma\backslash H$ is a $3$-dimensional nilmanifold. The Chevalley-Eilenberg complex of $\fh$ is
\[
 (\bigwedge(e^1,e^2,e^3), \ de^3=e^1\wedge e^2).
\]
Since $b_1(N)=2$, $N$ is not diffeomorphic to a torus.

\end{example}

\section{Formality and the Lefschetz property in symplectic geometry}\label{FLP_S}

\subsection{Topology of compact K\"ahler manifolds}

Suppose that $M$ is a compact manifold. The question of whether and how the existence of a K\"ahler metric $g$ on $M$ constrains the topology of $M$ has motivated a great deal of research in 
mathematics. Let us recall the main topological features of a compact K\"ahler manifold.

\begin{theorem}\label{Topology_Kahler}
 Let $(M^{2n},J,g)$ be a compact K\"ahler manifold and let $\omega$ be the K\"ahler form. Then
 \begin{enumerate}[(i)]
 \item the odd Betti numbers $b_{2k-1}(M)$ are even, $1\leq k\leq n$;
 \item the Lefschetz map $L^{n-k}\colon H^k(M;\bR)\to H^{2n-k}(M;\bR)$, $[\alpha]\mapsto [\omega]^{n-k}\wedge[\alpha]$ is an isomorphism, $0\leq k\leq n$;
  \item the rational homotopy type of $M$ is formal. 
 \end{enumerate}
\end{theorem}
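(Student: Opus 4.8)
The plan is to derive all three assertions from Hodge theory on the compact K\"ahler manifold $(M^{2n},J,g)$, the common input being the K\"ahler identities relating $d$, $d^c=J^{-1}\circ d\circ J$, the Lefschetz operator $L=\omega\wedge(-)$ and their formal adjoints with respect to the Hodge inner product. I begin with (iii), reproving the theorem of \cite{DGMS}. The first and decisive step is the $dd^c$-lemma: on a compact K\"ahler manifold
\[
\mathrm{im}\,d\cap\ker d^c=\ker d\cap\mathrm{im}\,d^c=\mathrm{im}\,dd^c .
\]
This I would prove from the K\"ahler identities, which give $\Delta_d=\Delta_{d^c}=2\Delta_{\bar\partial}$, together with the Hodge decomposition these Laplacians share. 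Granting it, put $A=\Omega^*(M)$ and $\mathcal{Z}_c=\ker(d^c\colon A\to A)$; since $d^c$ is a derivation, $\mathcal{Z}_c$ is a subalgebra and $\mathrm{im}\,d^c$ an ideal in it, and I consider the zig-zag of cdga morphisms
\[
(A,d)\hookleftarrow(\mathcal{Z}_c,d)\stackrel{\pi}{\longrightarrow}(\mathcal{Z}_c/\mathrm{im}\,d^c,\,0).
\]
Here $d$ descends to the zero differential on the quotient, because $\alpha\in\ker d^c$ forces $d\alpha\in\mathrm{im}\,d\cap\ker d^c=\mathrm{im}\,dd^c\subseteq\mathrm{im}\,d^c$; a short diagram chase with the $dd^c$-lemma then shows that both arrows induce isomorphisms on cohomology and that the quotient is isomorphic, as a graded algebra, to $H^*(M;\bR)$. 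Thus $(\Omega^*(M),d)$ is joined to $(H^*(M;\bR),0)$ by a chain of quasi-isomorphisms, so the real minimal model is formal in the sense of Definition \ref{minimal}; formality over $\bQ$ then follows from the standard fact that formality is insensitive to extension of the ground field of characteristic zero \cite{DGMS}.

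For (ii) I would exhibit the $\mathfrak{sl}(2,\bR)$-module structure on harmonic forms. Let $\Lambda$ be the adjoint of $L$ for the Hodge inner product and $H$ the operator equal to $(k-n)\,\Id$ on $k$-forms; one has $[L,\Lambda]=H$, $[H,L]=2L$, $[H,\Lambda]=-2\Lambda$. The K\"ahler identities ($[\Lambda,d]=-(d^c)^{*}$, up to normalisation) imply that $L$ and $\Lambda$ commute with $\Delta_d$, hence act on the space $\cH^*(M)$ of harmonic forms, which by Hodge theory is $\cong H^*(M;\bR)$. This makes $\cH^*(M)$ a finite-dimensional representation of $\mathfrak{sl}(2,\bR)$; decomposing it into irreducibles (symmetric powers of the standard representation) and matching weights, $L^{n-k}$ carries the weight-$(k-n)$ subspace $\cH^k(M)$ isomorphically onto $\cH^{2n-k}(M)$, which is precisely (ii).

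Assertion (i) I would obtain from the Hodge decomposition $H^k(M;\bC)=\bigoplus_{p+q=k}H^{p,q}(M)$, available because on a K\"ahler manifold $\Delta_d=2\Delta_{\bar\partial}$ preserves the bidegree; complex conjugation gives $\overline{H^{p,q}(M)}\cong H^{q,p}(M)$, so the Hodge numbers satisfy $h^{p,q}=h^{q,p}$ and $b_k(M)=\sum_{p+q=k}h^{p,q}$. When $k$ is odd there is no term with $p=q$, and $(p,q)\mapsto(q,p)$ pairs off the index set, whence $b_k(M)=2\sum_{p<q,\ p+q=k}h^{p,q}$ is even.

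The main obstacle is the $dd^c$-lemma underlying (iii): parts (i) and (ii) are essentially representation theory and linear algebra once the K\"ahler identities are in hand, whereas the $dd^c$-lemma is where the genuine global analysis enters — one needs the elliptic Hodge decomposition on the compact manifold together with the precise identification $\Delta_d=\Delta_{d^c}=2\Delta_{\bar\partial}$ — and it is also the statement most prone to being mis-stated, so care with the exact formulation and with the three-Laplacian identity is where the work lies.
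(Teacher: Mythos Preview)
Your argument is correct and essentially the standard one; the paper itself does not give a self-contained proof but refers to \cite{Hu} for (i)--(ii) and to \cite{DGMS} for (iii), so your sketch of the $dd^c$-lemma zig-zag and of the $\mathfrak{sl}(2,\bR)$-action on harmonic forms is exactly what underlies those references.

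The one genuine difference is how (i) is obtained. You deduce evenness of $b_{2k-1}$ from the Hodge decomposition and the conjugation symmetry $h^{p,q}=h^{q,p}$. The paper instead remarks that (ii) already implies (i): once the Lefschetz map is an isomorphism, the pairing
\[
H^{2k-1}(M;\bR)\times H^{2k-1}(M;\bR)\to H^{2n}(M;\bR)\cong\bR,\qquad ([\alpha],[\beta])\mapsto [\alpha]\wedge[\beta]\wedge[\omega]^{n-2k+1}
\]
is skew-symmetric and non-degenerate, hence a symplectic form on $H^{2k-1}(M;\bR)$, forcing its dimension to be even (Poincar\'e duality handles the remaining odd degrees). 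This route has the advantage, exploited later in the paper, that the implication ``Lefschetz $\Rightarrow$ odd Betti numbers even'' holds for \emph{any} compact symplectic manifold, not just K\"ahler ones --- which is why the rows ``Lefschetz yes, $b_{2k-1}$ even no'' in Tables~\ref{table:1} and~\ref{table:2} are marked impossible. Your Hodge-number argument is of course fine for the K\"ahler statement at hand, but it does not yield that more general implication.
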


$(i)$ and $(ii)$ follow from Hodge-de Rham theory on a compact K\"ahler manifold. Notice that $(ii)$ implies $(i)$. This is clear since, for $1\leq k\leq\lfloor\frac{n}{2}\rfloor$, 
the bilinear map
\[
H^{2k-1}(M;\bR)\times H^{2k-1}(M;\bR)\to H^{2n}(M;\bR)\cong\bR, \quad ([\alpha],[\beta])\mapsto [\alpha]\wedge[\beta]\wedge[\omega]^{n-2k+1}
\]
is a symplectic form on $H^{2k-1}(M;\bR)$. Then one simply applies Poincar\'e duality. For a proof of these facts we refer to \cite{Hu}. 

These properties have proven to be extremely useful in the task of constructing examples of compact symplectic manifolds with no K\"ahler metric (see for instance 
\cite{BFM1,Cavalc1,FM,Gompf,McDuff}). It is natural to ask whether these three properties are related on a compact symplectic manifold. Such a question was first tackled in \cite{IRTU}. The authors 
collected the examples, known at the time, of compact symplectic manifolds which violate some of the properties of Theorem \ref{Topology_Kahler}.
The kind of question we want to answer is

\begin{quote}
 Is there a compact symplectic manifold $M$ which satisfies $(i)$ and $(iii)$ above but not $(ii)$?
\end{quote}

The fundamental group plays a crucial role in setting this question. Indeed, while it is relatively easy to come up with non-simply connected compact symplectic manifolds which are non-formal or do 
not satisfy the Lefschetz property, the same question is harder in the simply connected case. We shall therefore subdivide our examples into the simply connected and the non-simply connected case.

\begin{remark}
There is a fourth topological constraint on the topology of compact K\"ahler manifolds: their fundamental groups are not arbitrary (see \cite{ABCKT}). On the other hand, Gompf showed in 
\cite{Gompf} that every finitely presented group is the fundamental group of a symplectic 4-manifold.
\end{remark}

\subsection{The Lefschetz property on compact symplectic manifolds}

Let $(M^{2n},\omega)$ be a compact symplectic manifold and consider, for $0\leq k\leq n$, the map
\begin{equation}\label{Lefschetz_map_forms}
 L^{n-k}\colon \Omega^k(M)\to \Omega^{2n-k}(M), \qquad \alpha\mapsto\omega^{n-k}\wedge\alpha.
\end{equation}

Clearly \eqref{Lefschetz_map} sends closed (resp. exact) forms to closed (resp. exact) forms, hence it descends to a well defined map
\begin{equation}\label{Lefschetz_map}
 L^{n-k}\colon H^k(M;\bR)\to H^{2n-k}(M;\bR).
\end{equation}

\begin{definition}\label{Lefschetz+Lefschetz_type}
 We say that a symplectic manifold $(M^{2n},\omega)$ satisfies the \emph{Lefschetz property} if the Lefschetz map \eqref{Lefschetz_map} is an isomorphism for $0\leq k\leq n$. $(M,\omega)$ 
is of \emph{Lefschetz type} if \eqref{Lefschetz_map} is an isomorphism for $k=1$.
\end{definition}

The Lefschetz map is related to some other important objects which can defined on a compact symplectic manifold. On the one hand, following work of Koszul \cite{Koszul}, Brylinski (see 
\cite{Brylinski}) and Libermann (see \cite{Libermann_Marle}) showed that, on a compact symplectic manifold $(M,\omega)$, one can define a \emph{symplectic} $\star$-operator, 
$\star\colon\Omega^k(M)\to\Omega^{2n-k}(M)$; from this, one gets a symplectic codifferential $\delta\colon\Omega^k(M)\to\Omega^{k-1}(M)$ defined by $\delta\coloneq (-1)^k(\star d \star)$. A form 
$\alpha\in\Omega^k(M)$ is \emph{symplectically harmonic} if 
\[
d\alpha=0 \quad \mathrm{and} \quad \delta\alpha=0.
\]
Denote by $\Omega^*(M,\omega)$ the space of sympletically harmonic forms on $(M,\omega)$. Clearly,
\[
 \cH^k(M,\omega)\coloneq \frac{\Omega^k(M,\omega)}{\Omega^k(M,\omega)\cap \mathrm{Im}\,d}
\]
is a subspace of $H^k(M;\bR)$. We have the following result, proved independently by Mathieu and Yan (see \cite[Corollary 2]{Mathieu} and \cite[Theorem 0.1]{Yan}):
\begin{theorem}
The inclusion $\cH^k(M,\omega)\hookrightarrow H^k(M;\bR)$ is an 
isomorphism if and only if $(M,\omega)$ has the Lefschetz property.
\end{theorem}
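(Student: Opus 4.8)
The plan is to establish both implications by exploiting the interplay between the Lefschetz map $L^{n-k}$ and the structure of symplectically harmonic forms, following the Hodge-theoretic strategy of Mathieu and Yan. First I would set up the relevant $\mathfrak{sl}(2)$-representation on $\Omega^*(M)$ generated by the three operators $L$ (wedging with $\omega$), $\Lambda$ (its pointwise adjoint, essentially the symplectic star conjugate of $L$) and the degree-counting operator $H$. A purely linear-algebra fact — the symplectic analogue of the Lefschetz decomposition on the level of forms — says that every form admits a unique decomposition into primitive components, and on each primitive piece the powers $L^j$ behave exactly as in the compact K\"ahler case; this gives the surjectivity/injectivity pattern of $L^{n-k}$ on \emph{forms}, and in particular shows $\alpha \in \Omega^k(M)$ with $k \le n$ satisfies $\delta\alpha = 0$ if and only if $\Lambda\alpha$ is controlled by the primitive part. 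The subtlety, and where the actual content lies, is passing from forms to cohomology, since $\delta$ is \emph{not} the adjoint of $d$ with respect to a metric and there is no elliptic theory forcing a Hodge decomposition of $H^*(M;\bR)$.

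For the direction ``Lefschetz property $\Rightarrow$ $\mathcal{H}^k(M,\omega) = H^k(M;\bR)$'', I would argue as follows. Given a class $a \in H^k(M;\bR)$ with $k \le n$, the Lefschetz property provides, by an easy descending induction on $k$ (the base case $k=n$ being vacuous and the inductive step using that $L^{n-k}\colon H^k \to H^{2n-k}$ is onto), a primitive representative: concretely, one writes $a = a_0 + L b + L^2 c + \cdots$ where $L b$, etc., are corrected away using surjectivity of lower Lefschetz maps, reducing to the case where $L^{n-k+1}a = 0$ in cohomology. Then I would choose a closed form $\alpha$ representing $a$ such that $\omega^{n-k+1}\wedge\alpha$ is not merely exact but is $d$ of something that can itself be taken in the image of $L$; the symplectic $\star$-operator turns ``$\omega^{n-k+1}\wedge\alpha$ exact with a primitively-adapted primitive'' into the statement $\delta\alpha = 0$. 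The honest way to do this is to invoke that a closed form $\alpha$ with $[\alpha]$ primitive (i.e. $L^{n-k+1}[\alpha]=0$) can be modified within its cohomology class to a form that is both closed and coclosed — this is exactly Yan's lemma, and proving it requires the $d$-$\delta$ lemma analogue, which in turn follows from the $\mathfrak{sl}(2)$-formalism plus the fact that $d$ and $\delta$ anticommute up to the operator $[\Lambda,d]$, a symplectic Weitzenb\"ock-type identity.

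For the converse, ``$\mathcal{H}^k(M,\omega) = H^k(M;\bR)$ for all $k$ $\Rightarrow$ Lefschetz property'', I would fix $k \le n$ and show $L^{n-k}\colon H^k \to H^{2n-k}$ is an isomorphism. Surjectivity: given $b \in H^{2n-k}(M;\bR)$, represent it by a symplectically harmonic form $\beta$ (using the hypothesis in degree $2n-k$); apply the symplectic star $\star\beta \in \Omega^k(M)$, which is again closed and coclosed, and use the $\mathfrak{sl}(2)$-identity $\star = \pm L^{n-k}$ on primitive forms together with the primitive decomposition to exhibit $\beta$ in the image of $L^{n-k}$ on cohomology. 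Injectivity then follows by a dimension count: if $L^{n-k}$ is surjective in complementary degrees for all $k$, comparing $\dim H^k$ and $\dim H^{2n-k}$ forces bijectivity, or alternatively one runs the same star-operator argument in the other degree. The main obstacle throughout is precisely the non-ellipticity of the symplectic Laplacian: unlike the Riemannian case one cannot simply assert that every class has a harmonic representative, so every step where I want to ``choose a nice representative'' must be routed through the algebraic Lefschetz decomposition and the symplectic $d$-$\delta$ lemma rather than through spectral theory. I expect the cleanest writeup to isolate that $d$-$\delta$ lemma (for forms whose class lies in the image/kernel of the appropriate power of $L$) as the key proposition and then deduce both implications formally.
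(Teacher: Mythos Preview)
The paper does not give its own proof of this theorem: it is stated as a result ``proved independently by Mathieu and Yan'' with references to \cite{Mathieu} and \cite{Yan}, and no argument is supplied. So there is nothing in the paper to compare your proposal against.

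On its own merits, your sketch is broadly the Yan approach and is essentially correct in outline: set up the $\mathfrak{sl}(2)$-triple $(L,\Lambda,H)$ on forms, use the pointwise Lefschetz decomposition into primitives, and then transfer this to cohomology via the Lefschetz hypothesis (one direction) or via the symplectic $\star$ and a dimension count using Poincar\'e duality (the other). Two small points are worth tightening. First, your phrase ``$d$ and $\delta$ anticommute up to the operator $[\Lambda,d]$'' is muddled: the actual identities are $\delta=[\Lambda,d]$ (this is how $\delta$ interacts with the $\mathfrak{sl}(2)$-action) and $d\delta+\delta d=0$ (the symplectic ``Laplacian'' vanishes identically), and it is the \emph{second} fact that makes symplectic Hodge theory so different from the Riemannian case and that you really need when modifying representatives. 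Second, be careful not to invoke ``the $d\delta$-lemma'' as an input in the Lefschetz $\Rightarrow$ harmonic direction, since by Merkulov's theorem (also quoted in the paper) the $d\delta$-lemma is \emph{equivalent} to the conclusion you are trying to prove; what you actually need, and what Yan proves directly from the $\mathfrak{sl}(2)$-relations, is the weaker statement that a closed form whose class is primitive in cohomology can be replaced, within its class, by a closed coclosed form.
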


The Lefschetz property appears indirectly in another feature of the study of cohomological properties of symplectic manifolds. Given a symplectic manifold $(M,\omega)$, we consider the 
differential $d$ and the symplectic codifferential $\delta$. The following property is known as \emph{$d\delta$-lemma}:
\begin{equation}
 \mathrm{Im}\,d\cap \ker \delta= \mathrm{Im}\,\delta\cap \ker\, d=\mathrm{Im}\,d\delta.
\end{equation}

Merkulov (see \cite{Merkulov}) related the $d\delta$-lemma with the symplectically harmonic cohomology. More precisely, he proved:
\begin{theorem}
A symplectic manifold $(M,\omega)$ satisfies the $d\delta$-lemma if and only if the inclusion $\cH^k(M,\omega)\hookrightarrow H^k(M;\bR)$ is an 
isomorphism.
\end{theorem}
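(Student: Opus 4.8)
The plan is to show the two implications separately, using the symplectic Hodge theory package (the symplectic $\star$-operator, the codifferential $\delta = (-1)^k \star d \star$, and the fact that $\star^2 = \Id$ on $\Omega^k(M)$) together with the purely algebraic consequence of the $d\delta$-lemma that closed-and-$\delta$-exact or $\delta$-closed-and-exact forms are $d\delta$-exact.

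First I would prove that the $d\delta$-lemma implies $\cH^k(M,\omega) \hookrightarrow H^k(M;\bR)$ is an isomorphism. Injectivity is built into the definition of $\cH^k(M,\omega)$, so only surjectivity needs work: given a closed form $\alpha$, I want to find a symplectically harmonic form in the same cohomology class, i.e. a form $\alpha' = \alpha - d\beta$ with $d\alpha' = 0$ and $\delta\alpha' = 0$. The idea is the standard Hodge-theoretic trick adapted to the symplectic setting: since $\alpha$ is closed, $\delta\star\alpha$ is (up to sign) $\star d\alpha = 0$ after applying $\star$, so $\star\alpha$ is a $\delta$-type statement; more directly, one writes $\delta\alpha = \delta(\alpha)$ and observes that $\delta\alpha \in \mathrm{Im}\,\delta \cap \ker d$ (it is $\delta$-exact and, since $d\alpha=0$, killed by $d$ after noting $d\delta\alpha = -\delta d \alpha = 0$). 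By the $d\delta$-lemma, $\delta\alpha = d\delta\gamma$ for some $\gamma$. Then set $\alpha' = \alpha - d\gamma$; this is cohomologous to $\alpha$, closed, and $\delta\alpha' = \delta\alpha - \delta d\gamma = d\delta\gamma - \delta d\gamma$. To finish I would need $\delta\alpha' = 0$, which requires the slightly more careful bookkeeping that $\delta d \gamma + d\delta\gamma$ acts as expected; the cleanest route is to instead apply the $d\delta$-lemma to the form $\star\alpha$ or to argue symmetrically in $d$ and $\delta$, choosing $\gamma$ so that both $\delta\alpha' = 0$ and $d\alpha' = 0$ hold — the symmetry of the $d\delta$-lemma under interchanging $d$ and $\delta$ (which follows from the symmetry of its statement) is what makes this possible.

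For the converse, suppose the inclusion $\cH^k(M,\omega) \hookrightarrow H^k(M;\bR)$ is an isomorphism for all $k$, and I want to deduce the three-term equality of the $d\delta$-lemma. The inclusion $\mathrm{Im}\,d\delta \subseteq \mathrm{Im}\,d \cap \ker\delta$ and $\mathrm{Im}\,d\delta \subseteq \mathrm{Im}\,\delta \cap \ker d$ are automatic from $d^2 = \delta^2 = 0$ and $d\delta = -\delta d$. For the reverse inclusions: take $\alpha \in \mathrm{Im}\,d \cap \ker\delta$, say $\alpha = d\beta$ and $\delta\alpha = 0$. The hypothesis says every cohomology class has a symplectically harmonic representative; applying this to $\beta$ (or rather using that $\mathrm{Im}\,d$ is complemented by symplectically harmonic forms), I can write $\beta = \beta_h + d u + \delta v$ in a suitable decomposition with $\beta_h$ symplectically harmonic, so $\alpha = d\beta = d\delta v$, which is exactly what we want. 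The analogous argument with $d$ and $\delta$ swapped handles $\mathrm{Im}\,\delta \cap \ker d$. The key structural input is that the isomorphism $\cH^k \cong H^k$ forces a "symplectic Hodge decomposition" $\Omega^k(M) = \Omega^k(M,\omega) \oplus \mathrm{Im}\,d \oplus \mathrm{Im}\,\delta$ analogous to the Riemannian case.

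The main obstacle I anticipate is that, unlike the Riemannian Laplacian, the symplectic "Laplacian" $d\delta + \delta d$ need not be elliptic (in fact it vanishes: $d\delta + \delta d = 0$ on a symplectic manifold, which is precisely why symplectic Hodge theory behaves so differently), so one cannot simply invoke elliptic theory to get the decomposition $\Omega^k = \ker \oplus \mathrm{Im}$ for free. The subtlety is therefore to establish the symplectic Hodge decomposition as an \emph{equivalent reformulation} of the hypothesis rather than a consequence of analysis — this is really the content of Merkulov's observation, and the honest proof tracks through the equivalence "$\cH^k \cong H^k$ for all $k$" $\Leftrightarrow$ "symplectic Hodge decomposition holds" $\Leftrightarrow$ "$d\delta$-lemma holds", with the middle term doing the work. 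Once that equivalence is isolated, each implication is a short formal manipulation with $d$, $\delta$, and $\star$; the care needed is purely in not presupposing the analytic facts that fail in the symplectic category.
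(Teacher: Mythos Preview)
The paper does not actually prove this theorem: it is stated with attribution to Merkulov \cite{Merkulov} and no argument is given. So there is no ``paper's own proof'' to compare against; I can only assess your proposal on its merits.

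Your forward implication is essentially correct once the signs are straightened out. Given a closed $\alpha$, the form $\delta\alpha$ lies in $\mathrm{Im}\,\delta\cap\ker d$ (since $d\delta=-\delta d$ on a symplectic manifold), so by the $d\delta$-lemma $\delta\alpha=d\delta\gamma=-\delta d\gamma$; then $\alpha'=\alpha+d\gamma$ is closed, $\delta$-closed, and cohomologous to $\alpha$. That half is fine.

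The converse, however, has a genuine gap. You write ``I can write $\beta=\beta_h+du+\delta v$'' and then acknowledge that this symplectic Hodge decomposition is ``the content of Merkulov's observation'', but you never show how the hypothesis $\cH^k\cong H^k$ produces such a decomposition for an \emph{arbitrary} form $\beta$. The hypothesis only tells you that every \emph{closed} form is cohomologous to a symplectically harmonic one; it says nothing directly about decomposing non-closed forms, and since the symplectic Laplacian $d\delta+\delta d$ vanishes identically (as you note), there is no elliptic theory to fall back on. The three-term decomposition $\Omega^k=\Omega^k(M,\omega)\oplus\mathrm{Im}\,d\oplus\mathrm{Im}\,\delta$ is in fact \emph{equivalent} to the $d\delta$-lemma, not a stepping stone toward it, so invoking it here is circular.

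The route actually taken in the literature (Merkulov, and later streamlined by Guillemin and Cavalcanti) passes through the Mathieu--Yan theorem stated just above in the paper: $\cH^k\cong H^k$ is equivalent to the Hard Lefschetz property, and Hard Lefschetz gives the $\mathfrak{sl}(2)$-module structure and Lefschetz decomposition on forms, from which the $d\delta$-lemma follows by a direct computation on primitive components. Your proposal bypasses Lefschetz entirely, which is why the converse stalls.
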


We mention here that the Lefschetz property has been studied also in the context of almost K\"ahler manifolds. Let $(M,g,J)$ be an almost K\"ahler manifold and let 
$\omega$ be the K\"ahler form. Motivated by the Donaldson ``tamed to compatible'' conjecture (see \cite{Donaldson2}), Li and Zhang considered in \cite{Li_Zhang} the following subspaces of 
$H^2(M;\bR)$:
\begin{itemize}
 \item $H^+_J(M)\coloneq \{\alpha \in H^2(M;\bR) \ | \ J\alpha=\alpha\}$;
 \item $H^-_J(M)\coloneq \{\alpha \in H^2(M;\bR) \ | \ J\alpha=-\alpha\}$.
\end{itemize}

One has $H^+_J(M)+H^-_J(M)\subset H^2(M;\bR)$, but the sum is in general neither direct nor equal to $H^2(M;\bR)$. The almost complex structure $J$ is said to be
\begin{itemize}
 \item $\cC^\infty$-pure if $H^+_J(M)\cap H^-_J(M)=0$;
 \item $\cC^\infty$-full if $H^+_J(M)+H^-_J(M)=H^2(M;\bR)$;
 \item $\cC^\infty$-pure-and-full if $H^+_J(M)\oplus H^-_J(M)=H^2(M;\bR)$.
\end{itemize}

Angella, Tomassini and Zhang proved the following result (compare \cite[Theorem 2.3]{Angella_Tomassini_Zhang}; see also \cite[Chapter 4]{Angella}):

\begin{theorem}
 Let $(M^{2n},g,J)$ be an almost K\"ahler manifold and assume that the Lefschetz map $L^{n-2}\colon \Omega^2(M)\to\Omega^{2n-2}(M)$ takes harmonic forms to harmonic forms. Then, if $J$ is 
$\cC^\infty$-full, it is also $\cC^\infty$-pure.
\end{theorem}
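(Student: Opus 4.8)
The plan is to prove directly that $H^+_J(M)\cap H^-_J(M)=0$; together with the fullness hypothesis this yields the direct sum $H^2(M;\bR)=H^+_J(M)\oplus H^-_J(M)$. Fix a class $\mathfrak{a}\in H^+_J(M)\cap H^-_J(M)$ and let $h$ be its $g$-harmonic representative; the goal is $h=0$. (If $n=1$ there are no nonzero $J$-anti-invariant $2$-forms, so $H^-_J(M)=0$ and there is nothing to prove; assume $n\ge 2$.) I will use three pointwise facts of almost Hermitian linear algebra, none of which needs integrability of $J$: (1) a real $2$-form $\psi$ with $J\psi=-\psi$ is primitive, $\Lambda\psi=0$, since $J$-invariant and $J$-anti-invariant $2$-forms are orthogonal and $\omega$ is $J$-invariant; (2) on $2$-forms the Hodge star is
\[
\star\alpha=-\tfrac{1}{(n-2)!}\,\omega^{n-2}\wedge\alpha+\tfrac{1}{(n-1)!}\,(\Lambda\alpha)\,\omega^{n-1},
\]
obtained from Weil's identity for primitive forms applied to $\alpha=\alpha_0+\tfrac1n(\Lambda\alpha)\omega$ with $\alpha_0$ primitive; and (3) $\omega$ is harmonic, since $d\omega=0$ and $\star\omega=\tfrac{1}{(n-1)!}\omega^{n-1}$ is closed.

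The first step is to show that $h$ is primitive. Since $h$ is harmonic, so is $\star h$, and by hypothesis $\omega^{n-2}\wedge h=L^{n-2}h$ is harmonic; by the formula in (2), $(\Lambda h)\,\omega^{n-1}$ is a linear combination of $\star h$ and $\omega^{n-2}\wedge h$, hence harmonic. Applying $\star$ once more, using $\star\omega^{n-1}=(n-1)!\,\omega$ and the $C^\infty(M)$-linearity of $\star$, shows that $(\Lambda h)\,\omega$ is harmonic; in particular $0=d\big((\Lambda h)\omega\big)=d(\Lambda h)\wedge\omega$, and since $L\colon\Lambda^1\to\Lambda^3$ is pointwise injective for $n\ge 2$ we get $d(\Lambda h)=0$, so $\Lambda h\equiv c$ is constant. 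Writing $h=h_0+\tfrac cn\omega$ with $h_0$ the primitive part, $h_0$ is harmonic (a difference of harmonic forms) and $h_0\wedge\omega^{n-1}=0$ pointwise. Now $\mathfrak{a}\in H^-_J(M)$ provides a closed $J$-anti-invariant, hence primitive, representative $\psi$, so $\mathfrak{a}\cup[\omega]^{n-1}=[\psi\wedge\omega^{n-1}]=0$; on the other hand $\mathfrak{a}\cup[\omega]^{n-1}=[h\wedge\omega^{n-1}]=\tfrac cn[\omega^n]$, and since $\int_M\omega^n\ne0$ we conclude $c=0$. Thus $h=h_0$ is primitive.

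With $h$ primitive the formula in (2) collapses to $\star h=-\tfrac{1}{(n-2)!}\omega^{n-2}\wedge h$, whence
\[
\|h\|_{L^2}^2=\int_M h\wedge\star h=-\tfrac{1}{(n-2)!}\int_M h\wedge\omega^{n-2}\wedge h=-\tfrac{1}{(n-2)!}\big\langle\mathfrak{a}^2\cup[\omega]^{n-2},[M]\big\rangle.
\]
To evaluate the last pairing I would use that $\mathfrak{a}$ has both a closed $J$-invariant representative $\phi$ (because $\mathfrak{a}\in H^+_J(M)$) and a closed $J$-anti-invariant representative $\psi$ (because $\mathfrak{a}\in H^-_J(M)$), so $\mathfrak{a}^2\cup[\omega]^{n-2}=[\phi\wedge\psi\wedge\omega^{n-2}]$. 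A bidegree count finishes the job: $\phi\in\Lambda^{1,1}$, $\psi\in\Lambda^{2,0}\oplus\Lambda^{0,2}$ and $\omega^{n-2}\in\Lambda^{n-2,n-2}$, so $\phi\wedge\psi\wedge\omega^{n-2}$ lies in $\Lambda^{n+1,n-1}\oplus\Lambda^{n-1,n+1}=0$ and vanishes pointwise. Hence $\|h\|_{L^2}^2=0$, so $h=0$ and $\mathfrak{a}=0$, i.e.\ $J$ is $\cC^\infty$-pure.

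The main obstacle is not a single hard estimate but the care needed to keep the pointwise identities honest on an \emph{almost} K\"ahler, rather than K\"ahler, manifold: one must verify that Weil's $\mathfrak{sl}(2)$-formula for $\star$ in degree $2$, the identity $\star\omega=\tfrac{1}{(n-1)!}\omega^{n-1}$, the harmonicity of $\omega$, and $\Lambda\psi=0$ for $J$-anti-invariant $\psi$ are all purely algebraic consequences of the compatibility of $g$, $J$ and $\omega$ (they are). A minor point is the implication $d(\Lambda h)\wedge\omega=0\Rightarrow d(\Lambda h)=0$, which uses injectivity of $L$ on $1$-forms and hence $n\ge 2$. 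It is worth remarking that the proof of $\cC^\infty$-purity above uses only the hypothesis that $L^{n-2}$ preserves harmonic $2$-forms; the $\cC^\infty$-full hypothesis enters solely to upgrade the conclusion to the direct sum decomposition $H^2(M;\bR)=H^+_J(M)\oplus H^-_J(M)$.
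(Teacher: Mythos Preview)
Your argument contains a genuine error in the Hodge--star formula for $2$-forms. Weil's identity on an almost Hermitian $2n$-manifold reads, for a \emph{primitive} $k$-form $\beta$,
\[
\star\beta=(-1)^{k(k+1)/2}\,\tfrac{1}{(n-k)!}\,\omega^{n-k}\wedge J\beta,
\]
with the operator $J$ (acting as $i^{p-q}$ on $(p,q)$-types) that you have silently dropped. For a primitive real $2$-form this gives
\[
\star\beta=\tfrac{1}{(n-2)!}\,\omega^{n-2}\wedge\bigl(-\beta^{(1,1)}+\beta^{(2,0)+(0,2)}\bigr),
\]
so the sign depends on the $J$-type; in dimension $4$ this is the familiar fact that primitive $(1,1)$-forms are anti-self-dual while $(2,0)+(0,2)$-forms are self-dual, contradicting your $\star\alpha_0=-\alpha_0$. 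Consequently your displayed formula for $\star\alpha$ is false unless $\alpha$ is of type $(1,1)$.

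This breaks both steps. In Step~2, $(\Lambda h)\,\omega^{n-1}$ is \emph{not} a linear combination of $\star h$ and $\omega^{n-2}\wedge h$ when $h$ has a nonzero $(2,0)+(0,2)$ component, so your route to $\Lambda h=\mathrm{const}$ fails. More decisively, even granting that $h$ is primitive, write $h=h^++h^-$ for its $J$-invariant and $J$-anti-invariant parts. Your bidegree argument correctly gives $\int_M h\wedge h\wedge\omega^{n-2}=\langle\mathfrak a^2\cup[\omega]^{n-2},[M]\rangle=0$, but with the correct star one finds
\[
0=\int_M\bigl((h^+)^2+(h^-)^2\bigr)\wedge\omega^{n-2}
=-(n-2)!\,\|h^+\|_{L^2}^2+(n-2)!\,\|h^-\|_{L^2}^2,
\]
hence only $\|h^+\|=\|h^-\|$, not $h=0$. (The cross term $\int h^+\wedge h^-\wedge\omega^{n-2}$ vanishes by the same bidegree count.) So the conclusion does not follow, and indeed your closing remark---that purity would hold without ever using fullness---should have been a warning sign. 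The paper does not supply its own proof of this statement; it is quoted from Angella--Tomassini--Zhang, whose argument uses the Lefschetz hypothesis in a different way and does invoke $\cC^\infty$-fullness.
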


To conclude this section, we quote a result of Benson and Gordon, see \cite[Proof of Theorem A]{Benson_Gordon}.
\begin{theorem}\label{Benson_Gordon}
 Let $(N^{2n},\omega)$ be a compact nilmanifold endowed with a symplectic structure. Assume that $(N,\omega)$ is of Lefschetz type. Then $N$ is diffeomorphic to a torus $T^{2n}$.
\end{theorem}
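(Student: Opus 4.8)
The plan is to combine Hasegawa's theorem on formality of nilmanifolds (Theorem~\ref{formal_nilmanifolds}) with the fact that a symplectic manifold of Lefschetz type places a strong restriction on the structure of the associated Lie algebra $\fg$. The key observation is that, by Nomizu's theorem (Theorem~\ref{Nomizu_Hattori_1}, applied to the nilpotent --- hence completely solvable --- Lie group $G$), the de Rham cohomology of $N=\Gamma\backslash G$ is computed by the Chevalley--Eilenberg complex $(\bigwedge\fg^*,d)$. Moreover, since $N$ carries a symplectic structure, one may assume (after averaging the symplectic form over $G$, which preserves closedness and non-degeneracy) that $\omega$ is a left-invariant form, i.e.\ $\omega\in\bigwedge^2\fg^*$ with $d\omega=0$ and $\omega^n\neq 0$ in $\bigwedge^{2n}\fg^*$.

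First I would show that the Lefschetz-type hypothesis forces $b_1(N)=2n$, which by Nomizu means $\dim H^1(\fg)=\dim(\fg/[\fg,\fg])=2n$, i.e.\ $[\fg,\fg]=0$ and $\fg$ is abelian. To see the dimension count: the Lefschetz map $L^{n-1}\colon H^1(N;\bR)\to H^{2n-1}(N;\bR)$ is an isomorphism by hypothesis, and Poincar\'e duality gives $b_{2n-1}(N)=b_1(N)$, so the content is that $L^{n-1}$ is injective. Suppose $[\alpha]\in H^1(N;\bR)$ is nonzero with $[\alpha]\wedge[\omega]^{n-1}=0$. Working at the level of the Chevalley--Eilenberg complex, $\alpha\in\fg^*$ is a closed element, so $\alpha$ annihilates $[\fg,\fg]$; if $[\fg,\fg]\neq 0$, one can produce such an $\alpha$ and then exhibit that $\alpha\wedge\omega^{n-1}$ must be exact --- the computation Benson and Gordon carry out --- contradicting injectivity of $L^{n-1}$ on cohomology. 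Concretely, choosing a complement of $[\fg,\fg]$ in $\fg$, one analyzes the top-degree pairing and uses that $\omega$ is closed together with the structure equations $de^k(e_i,e_j)=-e^k([e_i,e_j])$ to rewrite $\alpha\wedge\omega^{n-1}$ as $d(\text{something invariant})$. This is the crux of Benson--Gordon's argument.

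Once $\fg$ is abelian, $G=\bR^{2n}$ and $N=\Gamma\backslash\bR^{2n}$ is a torus $T^{2n}$, as claimed. Alternatively, and perhaps more cleanly, one can invoke formality directly: a nilmanifold admitting a symplectic form of Lefschetz type is in particular \emph{of Lefschetz type}, and by Theorem~\ref{formal_nilmanifolds} it suffices to show such a nilmanifold is formal, since the only formal nilmanifolds are tori. However, going through formality is not obviously shorter, because establishing formality of $N$ from the Lefschetz-type condition is not automatic for nilmanifolds; so I would stick with the direct cohomological route, deriving $b_1(N)=2n$ and hence abelianness of $\fg$.

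The main obstacle is the core lemma that injectivity of $L^{n-1}\colon H^1\to H^{2n-1}$ on a symplectic nilmanifold forces $\fg$ abelian. This requires a careful manipulation inside $\bigwedge\fg^*$: one must show that for any $\alpha\in\fg^*$ vanishing on $[\fg,\fg]$ the class of $\alpha\wedge\omega^{n-1}$ vanishes in $H^{2n-1}(\fg)$, and conversely that such nonzero $\alpha$ exists whenever $\fg$ is non-abelian. The proof of this is exactly the content of \cite[Proof of Theorem A]{Benson_Gordon}, and I would present it by reducing to the nilpotency filtration of $\fg$: picking $\alpha$ dual to a central element not in $[\fg,\fg]$ (using that the center of a nonabelian nilpotent Lie algebra is nontrivial and that $\fg/[\fg,\fg]$ surjects onto it in a suitable sense), and then integrating by parts in the Chevalley--Eilenberg complex. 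Everything else --- averaging $\omega$, invoking Nomizu, and concluding $G=\bR^{2n}$ --- is routine.
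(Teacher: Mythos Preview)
The paper does not itself prove Theorem~\ref{Benson_Gordon}; it only quotes the result from \cite{Benson_Gordon}. It does, however, prove the cosymplectic analogue (the theorem in Section~\ref{LP_C} that a $1$-Lefschetz cosymplectic nilmanifold is a torus), and that proof is a direct transcription of Benson--Gordon's argument. Your overall strategy matches both: pass to the Chevalley--Eilenberg complex via Nomizu, replace $\omega$ by a left-invariant representative (non-degeneracy survives because $[\omega]^n\neq 0$), and derive a contradiction by exhibiting a nonzero closed $1$-form killed by $L^{n-1}$ whenever $\fg$ is non-abelian.

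The gap is in your identification of that $1$-form. You propose ``$\alpha$ dual to a central element not in $[\fg,\fg]$'', but such an element need not exist: in any non-abelian nilpotent Lie algebra the center can lie entirely inside $[\fg,\fg]$ (the Heisenberg algebra already shows this), so your construction is empty in general. The actual mechanism, visible in the paper's cosymplectic proof, is different. One fixes a Mal'cev basis $\{x_1,\ldots,x_{2n}\}$ of $\fg^*$ with $dx_{2n}\neq 0$ and writes $\omega=\omega'+z\wedge x_{2n}$, where $\omega'$ does not involve $x_{2n}$. Because $x_{2n}$ never appears in any $dx_i$, closedness of $\omega$ forces $dz=0$; non-degeneracy of $\omega$ forces $z\neq 0$. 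Introducing the degree-$(-1)$ derivation $\lambda$ on $\bigwedge\fg^*$ with $\lambda(x_{2n})=1$ and $\lambda(x_i)=0$ for $i<2n$, one applies $\lambda$ to the degree-forced identity $y\wedge\omega^n=0$ for each closed $y\in\fg^*$; since no closed $1$-form involves $x_{2n}$, $\lambda(y)=0$ and one obtains $y\wedge z\wedge\omega^{n-1}=0$ for every closed $y$, whence $[z\wedge\omega^{n-1}]=0$ by Poincar\'e duality. That is the contradiction. Your phrases ``integrating by parts'' and ``nilpotency filtration'' gesture toward this, but the construction you actually write down does not produce the needed cocycle.
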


Hence every symplectic non-toral nilmanifold violates condition $(ii)$ in Theorem \ref{Topology_Kahler} and is therefore non K\"ahler. By Nomizu's theorem \ref{Nomizu_Hattori_1}, a symplectic form on 
a nilmanifold is cohomologous to a left-invariant one. Symplectic structures on nilpotent Lie algebras have been studied in \cite{Guan}. A complete classification of symplectic nilmanifolds up to 
dimension $6$ is available in \cite{Bazzoni_Munoz}. As we observed, nilmanifolds are never simply connected.

\subsection{Formality of compact symplectic manifolds}

As we already pointed out, formality is a property of the rational homotopy type of a space, or manifold. Since every compact K\"ahler manifold is formal according to Theorem \ref{Topology_Kahler} 
(see also \cite{DGMS}), it is reasonable to investigate whether formality also holds for arbitrary symplectic manifolds. In the context of nilmanifolds, a formal symplectic nilmanifold is 
diffeomorphic to a torus by Hasegawa's theorem \ref{formal_nilmanifolds}. Hence, a symplectic non-toral nilmanifold violates condition $(iii)$ in Theorem \ref{Topology_Kahler}. Using nilmanifolds, we 
obtain many examples of non formal symplectic manifolds, albeit non simply connected.

In fact, the construction of a \emph{simply connected} symplectic non formal manifold is a much harder problem. As very often happens in symplectic geometry, one of the main problems is that there 
are relatively few techniques to construct symplectic manifolds. Due to this lack of examples, Lupton and Oprea (see \cite{Lupton_Oprea}) asked whether a \emph{simply connected} compact symplectic 
manifold is formal (this is what they called the \emph{formalising tendency} of a symplectic structure). Nowadays, however, we know many simply connected symplectic non formal manifolds: 

\begin{theorem}\label{SCNF}
 For every $n\geq 4$ there exists a compact, simply connected non formal symplectic manifold of dimension $2n$.
\end{theorem}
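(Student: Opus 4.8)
\emph{Plan and reduction.} The plan is to reduce the statement to its single hardest instance, dimension $8$, and then to exhibit one example there by killing the fundamental group of a non-formal symplectic manifold through symplectic surgeries that keep a non-trivial Massey product alive. For the reduction, suppose we have produced a compact, simply connected, non-formal symplectic $8$-manifold $M$. For $n\geq 4$ set $X_n=M\times\bC P^{n-4}$ with the product symplectic form, using the convention $\bC P^0=\{\mathrm{pt}\}$ so that $X_4=M$. Then $X_n$ is a compact symplectic manifold of dimension $2n$, and it is simply connected since both factors are. Since $M$ is a homotopy retract of $X_n$ (include $x\mapsto(x,\mathrm{pt})$, project to the first factor) and a retract of a formal space is formal, non-formality of $M$ forces non-formality of $X_n$; equivalently, the minimal model of $X_n$ is $(\bigwedge V_M,d)\ox(\bigwedge V_{\bC P^{n-4}},d)$ and $\bC P^{n-4}$ is formal by Theorem \ref{Topology_Kahler}. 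Thus it is enough to build $M$.

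\emph{Construction in dimension eight.} Begin with a compact symplectic $8$-manifold that is non-formal but has infinite fundamental group --- for instance a symplectic non-toral nilmanifold $N=\Gamma\backslash G$ of dimension $8$, such as the product of two copies of the Kodaira--Thurston nilmanifold, which is symplectic and non-formal by Hasegawa's Theorem \ref{formal_nilmanifolds}. On the Chevalley--Eilenberg model of $\fg$ one writes down a non-vanishing triple Massey product $\langle a,b,c\rangle$, keeping track of closed representatives $\alpha_i$ and of primitives $\sigma,\tau$ with $\alpha_1\wedge\alpha_2=d\sigma$, $\alpha_2\wedge\alpha_3=d\tau$. One then removes $\pi_1(N)=\Gamma$: a generating set of $\Gamma$ is realized by embedded loops, and using Gompf-type symplectic sums along codimension-two symplectic submanifolds one performs a finite sequence of symplectic surgeries successively annihilating these generators, each surgery localized in a region disjoint from fixed representatives of $a,b,c$ and from $\sigma,\tau$. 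The outcome is a compact, simply connected symplectic $8$-manifold on which $a\cup b$ and $b\cup c$ still vanish, $\langle a,b,c\rangle$ is still defined, and still represents a non-zero class modulo its (possibly enlarged) indeterminacy, hence a non-formal manifold. An alternative route, in the spirit of Babenko--Taimanov and of Cavalcanti's analysis of the cohomology of symplectic blow-ups, starts from a simply connected symplectic manifold together with a symplectic submanifold whose blow-up produces a non-trivial Massey product, with the dimensions arranged so that the blow-up has dimension $8$.

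\emph{Where the difficulty lies.} The whole problem is the control of the rational homotopy type under the topological modification. Both the symplectic sum and the blow-up alter cohomology non-trivially --- via Mayer--Vietoris and the projective-bundle formula, respectively --- so one must check, step by step, both that the Massey product stays \emph{well-defined} (the cup products $a\cup b$ and $b\cup c$ remain zero) and that it stays \emph{non-zero} (it is not swallowed by the indeterminacy subspace, which itself changes). This bookkeeping is the technical heart of the proof. That it cannot be bypassed is mirrored in the sharpness of the bound $n\geq 4$: every compact simply connected manifold of dimension at most $6$ is formal, so there is no slack below dimension $8$, which is exactly why this case was settled only after the higher-dimensional ones.
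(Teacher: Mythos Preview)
The paper does not prove this theorem; it cites Babenko--Ta\v\i manov for $n\geq 5$ and Fern\'andez--Mu\~noz for $n=4$. Your reduction via $M\times\bC P^{n-4}$ is correct and is a clean simplification: once the $8$-dimensional example exists, products with projective spaces cover all higher dimensions, and your retract argument for non-formality is valid.

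The gap lies entirely in the $8$-dimensional case, where you have written a programme rather than a proof, and neither branch of the programme works as stated. Your first route misdescribes Gompf's symplectic sum: it is performed along codimension-two symplectic submanifolds with matching normal bundles, not along embedded loops, and there is no general mechanism for symplectically surgering away the fundamental group of a nilmanifold while leaving prescribed closed forms and their primitives undisturbed; even if the forms survive locally, you offer no control over the new indeterminacy subspace, which you yourself identify as the crux. Your second route---blow-ups in the Babenko--Ta\v\i manov style---is precisely what fails below dimension $10$: the non-trivial Massey product on the blow-up is built from the cohomology of the embedded non-formal submanifold tensored with powers of the exceptional class, and the degree bookkeeping forces the ambient dimension to be at least $10$, which is exactly why their result stops at $n=5$. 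The actual construction in \cite{FM} is of a different nature: one takes a specific symplectic $8$-dimensional nilmanifold, quotients by a finite group acting symplectically with only isolated fixed points so that the resulting symplectic orbifold is simply connected, resolves the singularities symplectically, and then computes enough of the minimal model of the resolution to exhibit a non-zero triple Massey product (equivalently, to show failure of $3$-formality and invoke Theorem~\ref{criterio2}). Each of these steps requires genuine work, and your outline does not approach any of them.
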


Theorem \ref{SCNF} was proven by Babenko and Ta\v\i manov (see \cite[Main Theorem]{Babenko_Taimanov}) for $n\geq 5$ and, later, by the second and third author for $n=4$ (see \cite[Theorem 1.1]{FM}). 
The 8-manifold constructed in \cite{FM} does not have the Lefschetz property.

\begin{remark}\label{Miller}
We point out that, due to a result of Miller (see \cite{FM2,Miller}), a simply connected manifold of dimension $\leq 6$ is automatically formal. Hence Theorem \ref{SCNF} covers all 
possible cases in which such a phenomenon can occur. 
\end{remark}

\subsection{Examples: the non-simply connected case}\label{FLP_S_NSC}

Thanks to recent contributions of many authors, we can fill up Table 1 of \cite{IRTU}.

\begin{table}[h]
\caption{Non simply connected symplectic manifolds}
\begin{center}
{\tabulinesep=1.2mm
\begin{tabu}{cccc}
\hline\hline
 Formality & Lefschetz & Evenness of & \\
  & property & $b_{2k-1}$ &\\
\hline\hline
yes & yes & yes & $T^{2n}$, $M^4$\\
yes & yes & no & impossible\\
yes & no & yes & $\Gamma\backslash BG$, $\Lambda\backslash G_{6.78}\times\Lambda\backslash G_{6.78}$\\
yes & no & no & $\Lambda\backslash G_{6.78}$\\
no & yes & yes & $C(12)\times T^2$\\
no & yes & no & impossible\\
no & no & yes &  $E^4$, $KT\times KT$\\
no & no & no & $KT$\\
\hline
\end{tabu}}
\end{center}
\label{table:1}
\end{table}

Let us describe the manifolds which appear in Table \ref{table:1}.

\begin{itemize}
 \item $T^{2n}$ is the $2n$-dimensional torus, which carries a K\"ahler structure.
 \item $M^4$ is a 4-dimensional compact symplectic completely solvable solvmanifold. According to 
 \cite{FG-2} (see also \cite[Theorem 3.2, page 87]{Oprea_Tralle})
 it does not admit a complex structure, hence it is not 
K\"ahler. This shows that there exist symplectic non-K\"ahler manifolds which \emph{can not be detected} using Properties $(i)$, $(ii)$ or $(iii)$ from Theorem \ref{Topology_Kahler}.
 In dimension $\geq 6$, examples of compact symplectic manifolds which satisfy Properties $(i)$, $(ii)$ and $(iii)$ of Theorem \ref{Topology_Kahler} 
 but do not admit a K\"ahler metric due to their fundamental groups are given in \cite{FMS}.
  \item $\Gamma\backslash BG$ is an 8-dimensional solvmanifold, the quotient of a simply connected completely solvable Lie group $BG$ by a lattice $\Gamma$. Benson and Gordon constructed in \cite{BG} 
a completely solvable non-exact symplectic 8-dimensional Lie algebra $\mathfrak{bg}$ whose Lie algebra cohomology $H^*(\mathfrak{bg};\bR)$ does not satisfy the Lefschetz property. In \cite[Theorem 
1]{SY}, Sawai and Yamada showed that $BG$ admits a lattice $\Gamma$. Since $BG$ is completely solvable, Hattori's Theorem \cite{Hattori} applies and we have $H^*(\Gamma\backslash BG;\bR)\cong 
H^*(\mathfrak{bg};\bR)$. In \cite[Proposition 3.2]{FdLS} the second author, de Le\'on and Saralegui showed that the odd-degree Betti numbers of $\mathfrak{bg}$ are even and that its minimal model is 
formal.
 \item $G_{6.78}$ is a simply connected completely solvable symplectic Lie group and $\Lambda\subset G_{6.78}$ is a lattice; we refer to \cite[Theorem 9.4]{Bock} for a complete description of this 
example.
\item $E^4$ is the compact nilmanifold defined by the equations $de^1 = de^2 = 0$, $de^3 = e^1\wedge e^2$, $de^4 = e^1\wedge e^3$ considered in \cite{FGG}.
 It was the first example of compact symplectic manifold of dimension $4$ which does not admit complex structures. In fact, $E^4$ is non-formal and does not
 satisfy the Lefschetz property, but its odd Betti numbers are even. Thus, by the Enriques-Kodaira classification \cite{Kod}, $E^4$ does not have 
 complex structures. Note that in dimension 6, Iwasawa manifold was the first example of compact symplectic and complex manifold whose odd Betti numbers are even, but not admitting K\"ahler metrics, 
as it is non-formal \cite{FG-1}. Non-K\"ahler compact symplectic manifolds in higher dimension are given for example in \cite{CoFG}.
\item $KT$ is the so-called Kodaira-Thurston manifold. As a symplectic manifold, it was first described by Thurston in \cite{Thurston}; there, he showed that $b_1(KT)=3$, 
 hence $KT$ is not K\"ahler. $KT$ also has the structure of a compact complex surface (a \emph{primary Kodaira surface}, see \cite[Page 197]{BHPVdV}) and was known to Kodaira. 
 Abbena described $KT$ as a nilmanifold in \cite{Abbena}. It is the product of the Heisenberg manifold (see Example \ref{Heisenberg}) and a circle.
\item $C(12)$ is a 12-dimensional simply connected symplectic manifold. We shall describe it in Section \ref{FLP_S_SC}.
\end{itemize}

\begin{remark}
In \cite{IRTU}, the authors distinguish between the simply connected and the \emph{aspherical} case, rather than non simply connected. All the examples in Table 1 are aspherical, except 
for $C(12)\times T^2$. We should point out here that a symplectic manifold $(M,\omega)$ is called \emph{symplectically aspherical} if
\[
 \int_{S^2}f^*\omega=0
\]
for every map $f\colon S^2\to M$. An aspherical symplectic manifold is clearly symplectically aspherical. Symplectically aspherical manifolds 
play an important role in symplectic geometry, see \cite{KRT} for a survey on this topic.
\end{remark}


\begin{remark}
In \cite{Kasuya}, Kasuya studied in detail the relationship between Lefschetz property and formality in the case of aspherical manifolds. He constructed examples of formal symplectic 
solvmanifolds which satisfy the Lefschetz property but carry no K\"ahler metric. Kasuya's examples are completely solvable solvmanifolds. For a study of the Lefschetz property on 6-dimensional 
solvmanifolds, see also \cite{Macri}. By a recent result of Hasegawa \cite{Hasegawa2}, a compact solvmanifold admits a K\"ahler structure if and only if it is a finite quotient of a complex torus 
which has the structure of a complex torus bundle over a complex torus. In particular, a K\"ahler completely solvable solvmanifold is diffeomorphic to a torus.
\end{remark}

\subsection{Examples: the simply connected case}\label{FLP_S_SC}

We collect and illustrate recent results which allow us to fill all the lines in Table 2 of \cite{IRTU}.

\begin{table}[h!]
\caption{Simply connected symplectic manifolds}
\begin{center}
{\tabulinesep=1.2mm
\begin{tabu}{cccc}
\hline\hline
 Formality & Lefschetz & Evenness of & \\
  & property & $b_{2k-1}$ &\\
\hline\hline
yes & yes & yes & $\bC P^n$\\
yes & yes & no & impossible\\
yes & no & yes & $M(6,0,0)$, $N$\\
yes & no & no & $\widetilde{\bC P}{}^7$\\
no & yes & yes & $C(12)$\\
no & yes & no & impossible\\
no & no & yes & $\widetilde{\bC P}{}^5\times\widetilde{\bC P}{}^5$\\
no & no & no & $\widetilde{\bC P}{}^5$\\
\hline
\end{tabu}}
\end{center}
\label{table:2}
\end{table} 

Before describing the content of Table \ref{table:2}, we would like to recall a construction of McDuff (see \cite{McDuff,Oprea_Tralle}) which allows to obtain new symplectic manifolds: the symplectic 
blow-up. Together with the fibre connected sum (see \cite{Gompf} and \cite[Chapter 7]{McD_Salamon}), symplectic fibrations (see \cite[Chapter 6]{McD_Salamon} and the references therein), 
approximately holomorphic techniques of Donaldson (see \cite{Auroux,Donaldson,MPS}) and symplectic resolutions (see \cite{CFM,FM}), it is the most effective technique when it comes to 
constructing new symplectic manifolds.

Let $(M^{2n},\omega)$ be a symplectic manifold. Without loss of generality, we can perturb $\omega$ and assume that it defines an integral cohomology class. By a result of Tischler \cite{Tischler} 
(see also \cite{Gromov}), there exists a symplectic embedding $\imath\colon (M^{2n},\omega)\hookrightarrow (\bC P^{2n+1},\omega_0)$, i.e. $\imath^*\omega_0=\omega$, where $\omega_0$ is the 
Fubini-Study K\"ahler structure of the complex projective space.

Using ideas of Gromov, and generalizing a known construction in algebraic geometry, McDuff defined the notion of \emph{symplectic blow-up} of a symplectic manifold $(X,\sigma)$ along 
a symplectic submanifold $(Y,\tau)$. Since $Y\subset X$ is a symplectic submanifold, the normal bundle $NY$ of $Y$ in $X$ has the structure of a complex vector bundle. The blow-up of $X$ along $Y$, 
replaces a point $y\in Y$ with the projectivization of $N_yY$. This produces a new manifold $\tilde{X}=\mathrm{Bl}_Y X$ and a map $p\colon \tilde{X}\to X$, with the following properties:
\begin{itemize}
 \item \cite[Proposition 2.4]{McDuff} $\pi_1(X)=\pi_1(\tilde{X})$ and $H^*(\tilde{X};\bR)$ fits into a short exact sequence of $\bR$-modules
 \begin{equation}\label{cohomology_blow_up}
  0\rightarrow H^*(X;\bR)\rightarrow H^*(\tilde{X};\bR)\rightarrow A^*\rightarrow 0,
 \end{equation}
where $A^*$ is a free module over $H^*(Y;\bR)$ with one generator in each dimension $2i$, $1\leq i\leq k-1$, where $2k$ is the codimension of $Y$ in $X$.
\item \cite[Proposition 3.7]{McDuff} If $Y$ is compact, $\tilde{X}$ carries a symplectic form $\tilde{\sigma}$ which agrees with $p^*\sigma$ outside a neighborhood of $p^{-1}(Y)$.
\end{itemize}


We also recall the following result of Gompf, which allows to construct compact symplectic manifolds which do not satisfy the Lefschetz property.
\begin{theorem}[\cite{Gompf}, Theorem 7.1]\label{Gompf}
 For any even dimension $n\geq 6$, finitely presented group $G$ and integer $b$ there exists a closed symplectic $n$-manifold $M=M(n,G,b)$ with $\pi_1(M)\cong G$ and $b_i(M)\geq b$ for $2\leq i\leq 
n-2$, such that $M$ does not satisfy the Lefschetz property. Furthermore, if $b_1(G)$ is even, then all odd-degree Betti numbers of $M$ are even.
\end{theorem}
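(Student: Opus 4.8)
The plan is to assemble $M$ from low-dimensional pieces: realise $G$ as the fundamental group of a symplectic $4$-manifold, raise the dimension to $n$ by a product, inflate all the middle Betti numbers by symplectic blow-ups, and arrange along the way that some Lefschetz map degenerates. The only tools needed are the symplectic fibre (connected) sum and McDuff's symplectic blow-up together with its cohomology sequence \eqref{cohomology_blow_up}, plus the elementary observation (used already in the discussion of Theorem~\ref{Topology_Kahler}) that the Lefschetz map $L^{m-1}\colon H^1\to H^{2m-1}$ of a $2m$-dimensional symplectic manifold is, under Poincar\'e duality, the \emph{alternating} cup pairing $(\alpha,\beta)\mapsto\int[\omega]^{m-1}\cup\alpha\cup\beta$ on $H^1$.

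First, since $G$ is finitely presented there is a closed symplectic $4$-manifold $(W,\omega_W)$ with $\pi_1(W)\cong G$, and Gompf's fibre-sum construction can be arranged so that $H_1(W;\bZ)$ is the abelianisation of $G$; then $b_1(W)=b_1(G)$ and, by Poincar\'e duality, $b_3(W)=b_1(G)$. Put $M'=W\times\bC P^{(n-4)/2}$ with the product symplectic form and $2m=n$; then $\pi_1(M')\cong G$. If $b_1(G)\ge 1$ the failure of the Lefschetz property is essentially free: a K\"unneth computation shows that the alternating pairing attached to $L^{m-1}$ on $M'$ is a positive multiple of the corresponding pairing on $H^1(W;\bR)$, i.e.\ of the one controlling $L^1$ on $W$; when $b_1(G)$ is odd this pairing is degenerate automatically (an alternating form on an odd-dimensional space), and when $b_1(G)$ is even one chooses $W$ so that the cup product $H^1(W;\bR)\times H^1(W;\bR)\to H^2(W;\bR)$ is degenerate — e.g.\ identically zero, which is compatible with the fibre-sum construction by taking the degree-one generators from disjoint surface summands — so that in either case $M'$ fails Lefschetz. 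When $b_1(G)=0$ one has $H^1(M';\bR)=0$, and the failure must instead be forced in $L^{m-2}\colon H^2\to H^{2m-2}$; this is the one place where the symplectic fibre sum is used essentially, gluing simply connected K\"ahler $n$-manifolds along a codimension-two symplectic submanifold so that the glued manifold acquires a nonzero degree-two class killed by $[\omega]^{m-2}$, making $L^{m-2}$ non-injective even though all Betti numbers in complementary degrees still agree.

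Next I would inflate the Betti numbers. Applying $b$ symplectic blow-ups of $M'$ at points preserves $\pi_1$ (McDuff, Prop.~2.4), adds $1$ to each even Betti number $b_2,\dots,b_{n-2}$ and leaves the odd ones fixed (by \eqref{cohomology_blow_up} with $Y$ a point), and preserves the Lefschetz degeneracy found above, since the exceptional class $e$ satisfies $p^*\alpha\cup e=0$ for every $\alpha\in H^1$, so the pairing on $H^1$ (resp.\ the relevant class in $H^2$) is unchanged. The odd middle Betti numbers $b_3,b_5,\dots$ are then raised above $b$ by an iterated procedure — replacing the $\bC P^{(n-4)/2}$ factor by, or blowing up along, suitable simply connected symplectic manifolds and submanifolds (projective hypersurfaces of large degree, blow-ups of projective spaces along curves of large genus or along higher-dimensional subvarieties) chosen so as to contribute in the required degrees; throughout, \eqref{cohomology_blow_up} shows that $\pi_1$ is unaffected, that the exceptional classes annihilate $H^1$, and that only \emph{even} amounts are added to the odd Betti numbers. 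Calling the result $M$, we get $\pi_1(M)\cong G$, $M$ symplectic of dimension $n$, $b_i(M)\ge b$ for $2\le i\le n-2$, and $M$ non-Lefschetz. For the last assertion: when $b_1(G)$ is even, every odd-degree Betti number of $M'=W\times\bC P^{(n-4)/2}$ is a sum of terms $b_i(W)$ with $i$ odd, hence a multiple of $b_1(W)=b_3(W)=b_1(G)$ and therefore even, and the subsequent blow-ups change odd Betti numbers only by even amounts; so all odd Betti numbers of $M$ are even.

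The main obstacle is not any single step but their compatibility: one must keep $\pi_1(M)\cong G$ \emph{and} push \emph{every} middle Betti number above $b$ \emph{and} force a Lefschetz map to degenerate \emph{and} (when $b_1(G)$ is even) keep \emph{every} odd Betti number even. These requirements pull against one another — inflating the odd Betti numbers naturally wants a surface factor, which enlarges $\pi_1$; breaking Lefschetz when $b_1(G)=0$ requires a genuinely non-K\"ahler modification, which is exactly what the symplectic fibre sum supplies; and the low dimension $n=6$, where $b_3(W\times\bC P^1)=2\,b_1(G)$ is forced regardless of the seed, needs a dedicated blow-up along a symplectic surface whose homology class is chosen so that the degeneracy survives. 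Reconciling all of this, and doing the bookkeeping cleanly, is the substance of the proof.
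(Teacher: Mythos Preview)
The paper does not prove this theorem: it is quoted verbatim from Gompf \cite{Gompf}, Theorem~7.1, and used as a black box (to produce the manifold $M(6,0,0)$ in Table~\ref{table:2}). So there is no ``paper's own proof'' to compare your proposal against.

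That said, your sketch has genuine gaps relative to what Gompf actually does. The most serious is the case $b_1(G)=0$: you say that failure of Lefschetz must be forced in $L^{m-2}$ by a fibre sum that produces ``a nonzero degree-two class killed by $[\omega]^{m-2}$'', but you give no indication of \emph{which} manifolds are being summed along \emph{which} submanifold, nor why the resulting class is nonzero while its image is zero. This is exactly the delicate point, and Gompf's argument is not the one you outline: he works directly with iterated fibre sums in the target dimension, controlling simultaneously $\pi_1$, the Betti numbers, and a specific Lefschetz defect, rather than starting from a $4$-manifold $W$ with $\pi_1(W)\cong G$ and taking a product with $\bC P^{(n-4)/2}$. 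A second gap is the inflation of the \emph{odd} middle Betti numbers: you propose blowing up along ``suitable simply connected symplectic manifolds and submanifolds'', but you need these submanifolds to exist symplectically inside the manifold already constructed, to have the right cohomology to push up $b_3,b_5,\ldots$, to contribute only even amounts there, and to leave both $\pi_1$ and the previously arranged Lefschetz degeneracy intact; none of this is verified. Your final paragraph correctly identifies that reconciling these constraints is the substance of the proof, but the proposal as written does not carry it out.
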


We begin now the description of the manifolds in Table \ref{table:2}.

\begin{itemize}
 \item $\bC P^n$ is the complex projective space, which is known to have a K\"ahler structure.
 \item $M(6,0,0)$ is the manifold constructed by taking $n=6$, $G$ the trivial group and $b=0$ in Theorem \ref{Gompf}. Since it is a simply connected 6-manifold, it is formal by the result of Miller, 
see Remark \ref{Miller}.
 \item $N$ is a $6$-dimensional simply connected (hence formal) symplectic and complex manifold which does not satisfy the Lefschetz property. Such a manifold was constructed by the authors in 
\cite{BFM1}.
 \item We shall construct $\widetilde{\bC P}{}^7$ in Proposition \ref{ynn} below.
 \item $C(12)$ is a simply connected, symplectic $12$-manifold which satisfies the Lefschetz property but has a non-zero triple Massey product. Hence it is non-formal. Such example was constructed by 
Cavalcanti in \cite[Example 4.4]{Cavalc1} using Donaldson's techniques together with the symplectic blow-up.
\item $\widetilde{\bC P}{}^5$ is the symplectic blow-up of $\bC P^5$ along a symplectic embedding of the Kodaira-Thurston manifold $KT$. Historically, this was the first example of a simply connected 
symplectic manifold with no K\"ahler structures. It was constructed by McDuff in \cite{McDuff}.
\end{itemize}

We come now to the description of $\widetilde{\bC P}{}^7$. Let us consider the symplectic $6$-manifold $S=\Lambda\backslash G_{6.78}$ which appeared in Section \ref{FLP_S_NSC}. Recall that $S$ is a 
formal symplectic manifold which does not satisfy the Lefschetz property and has $b_1(S)=1$. According to Tischler's result, we find a symplectic embedding of $(S,\omega)$ in $(\bC P^7,\omega_0)$. 
Now set $\widetilde{\bC P}{}^7\coloneq \mathrm{Bl}_S\bC P^7$.

\begin{proposition}\label{ynn}
 $\widetilde{\bC P}{}^7$ is a simply connected, formal symplectic manifold which does not satisfy the Lefschetz property and has $b_3(\widetilde{\bC P}{}^7)=1$.
\end{proposition}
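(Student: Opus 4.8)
The plan is to establish the four asserted properties of $\widetilde{\bC P}{}^7 = \mathrm{Bl}_S \bC P^7$ in turn, using the general properties of the symplectic blow-up recalled above together with the known features of $S = \Lambda\backslash G_{6.78}$. Recall that $S$ is a compact symplectic $6$-manifold with $\pi_1(S)$ solvable, $b_1(S)=1$, which is formal but does not satisfy the Lefschetz property. By Tischler's theorem we have a symplectic embedding $\imath\colon S \hookrightarrow \bC P^7$ (after perturbing $\omega$ to an integral class), and the codimension of $S$ in $\bC P^7$ is $2k = 8$, so $k=4$.

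First I would record the simply-connectedness: by McDuff's Proposition 2.4, $\pi_1(\widetilde{\bC P}{}^7) = \pi_1(\bC P^7) = 1$. Next, the symplectic structure comes for free from McDuff's Proposition 3.7, since $S$ is compact. For the cohomology, I would plug into the short exact sequence \eqref{cohomology_blow_up}: here $A^*$ is a free module over $H^*(S;\bR)$ with one generator in each even degree $2i$, $1\le i\le 3$. Combining the known $H^*(\bC P^7;\bR)$ (one-dimensional in each even degree $0,\dots,14$, zero in odd degrees) with $H^*(S;\bR)$, one computes the Betti numbers of $\widetilde{\bC P}{}^7$ degree by degree. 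In particular $H^3(\widetilde{\bC P}{}^7;\bR)$ receives $H^3(\bC P^7;\bR)=0$ from the sub and, from $A^*$, the part of $A^3$ coming from the generator in degree $2$ times $H^1(S;\bR)$ (the degree-$0$ and degree-$2$ generators of $A^*$ contribute $H^3(S)$ and $H^1(S)$ respectively, and $H^3(S)$, $H^1(S)$ have the relevant dimensions). The upshot is $b_3(\widetilde{\bC P}{}^7) = b_1(S) = 1$ (using $b_3(S)$ does not feed into degree $3$ of $A^*$ but into higher degrees; only $b_1(S)=1$ via the degree-$2$ generator contributes). I would verify this bookkeeping carefully, as it is the one genuinely computational point.

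For the failure of the Lefschetz property, the idea is that $S$ already fails it, and the blow-up inherits this defect. Since $b_3(\widetilde{\bC P}{}^7)=1$ is odd, one immediate route is: if $\widetilde{\bC P}{}^7$ satisfied the Lefschetz property then, as explained after Theorem \ref{Topology_Kahler}, the pairing on $H^3$ via $[\alpha]\wedge[\beta]\wedge[\tilde\omega]^{n-3}$ (with $2n=14$) would be a symplectic form on an odd-dimensional space $H^3(\widetilde{\bC P}{}^7;\bR)\cong\bR$, which is impossible. Hence $b_3$ odd already obstructs the Lefschetz property, and this is the cleanest argument — it avoids tracing $[\omega]$-multiplication through \eqref{cohomology_blow_up} altogether.

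Finally, formality. Here I would invoke Theorem \ref{criterio2} (the Miller-type / $s$-formality criterion): $\widetilde{\bC P}{}^7$ is a compact oriented manifold of dimension $14$, so it is formal iff it is $6$-formal. The relevant input is that $S$ is formal, $\bC P^7$ is formal (it is Kähler), and the blow-up of a formal manifold along a formal submanifold, along a formal embedding, is formal — this is a result available in the symplectic blow-up literature (e.g. \cite{Oprea_Tralle}), which controls the minimal model of $\widetilde{X}$ via \eqref{cohomology_blow_up}. The main obstacle is precisely this formality step: one must know that the extension \eqref{cohomology_blow_up} is compatible with formality, i.e. that the inclusion $H^*(\bC P^7;\bR)\hookrightarrow H^*(\widetilde{\bC P}{}^7;\bR)$ together with the module structure of $A^*$ over $H^*(S;\bR)$ forces the rational homotopy type of $\widetilde{\bC P}{}^7$ to be a formal consequence of its cohomology. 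Granting the cited blow-up formality theorem this is immediate; otherwise one would argue directly with the $s$-formality decomposition $V = C\oplus N$ of Theorem \ref{criterio1}, using that both $\bC P^7$ and $S$ have such decompositions and that the new generators introduced by the blow-up (dual to the classes in $A^*$) can be placed in $C$ with their differentials expressed via the formal model. I expect writing out this last point rigorously to be the part that requires the most care.
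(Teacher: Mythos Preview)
Your approach matches the paper's: McDuff's results give simple connectivity and the symplectic structure, the exact sequence \eqref{cohomology_blow_up} gives $b_3=1$ (minor slip: $A^*$ has generators only in degrees $2,4,6$, there is no degree-$0$ generator, so $A^3 \cong H^1(S)$ and your conclusion is correct), and odd $b_3$ immediately obstructs Lefschetz. For formality the paper proceeds exactly as you outline---it first cites the blow-up formality theorem \cite[Theorem~1.1]{FM4} (this is the correct reference, not \cite{Oprea_Tralle}), and then, just as you anticipate is ``the part that requires the most care'', carries out the explicit $6$-formality check via Theorem~\ref{criterio2}: it computes $H^*(S;\bR)$ and $H^*(\widetilde{\bC P}{}^7;\bR)$ concretely, writes the minimal model through degree $6$ (finding $N^{\le 5}=0$ and $N^6$ one-dimensional, generated by an element $u$ with $du=x^2w$), and checks that closed elements of $I(N^6)$ map to exact forms.
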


\begin{proof}
That $\pi_1(\widetilde{\bC P}{}^7)=0$ follows immediately from McDuff result we discussed above. $b_3(\widetilde{\bC P}{}^7)=1$ follows from \eqref{cohomology_blow_up} and the fact that $b_1(S)=1$. 
This immediately implies that $\widetilde{\bC P}{}^7$ does not have the Lefschetz property.

By \cite[Theorem 1.1]{FM4}, $\widetilde{\bC P}{}^7$ is formal. However, let us see this explicitly
by computing its minimal model. 
The $6$-manifold $S=\Lambda\backslash G_{6.78}$ is the quotient of 
the simply connected completely solvable symplectic Lie group $G_{6.78}$ 
of \cite[Theorem 9.4]{Bock} by a lattice $\Lambda\subset G_{6.78}$. The
cohomology of $S$ is computed from the cdga $(\bigwedge (\alpha_1,\alpha_2,\alpha_3,
\alpha_4,\alpha_5,\alpha_6),d)$, with all $\alpha_i$ of degree one, and
\begin{itemize}
 \item $d\alpha_1=\alpha_2\wedge\alpha_5-\alpha_1\wedge\alpha_6$;
 \item $d\alpha_2=\alpha_4\wedge\alpha_5$;
 \item $d\alpha_3=\alpha_2\wedge\alpha_4+\alpha_3\wedge\alpha_6+\alpha_4\wedge\alpha_6$;
 \item $d\alpha_4=\alpha_4\wedge\alpha_6$;
 \item $d\alpha_5=-\alpha_5\wedge\alpha_6$;
 \item $d\alpha_6=0$.
\end{itemize}
The manifold $S$ has a symplectic structure, defined by 
$\omega=\alpha_1\wedge\alpha_4+\alpha_2\wedge\alpha_6+\alpha_3 \wedge\alpha_5$.
The cohomology of $S$ is given by
\begin{itemize}
 \item $H^0(S;\bR)=\langle 1\rangle$;
 \item $H^1(S;\bR)=\langle [\eta]\rangle$;
 \item $H^2(S;\bR)=\langle [\omega]\rangle$;
 \item $H^3(S;\bR)=\langle [\omega\wedge\eta], [\psi]\rangle$;
 \item $H^4(S;\bR)=\langle[\omega^2]\rangle$;
 \item $H^5(S;\bR)=\langle[\omega\wedge \psi]\rangle$;
 \item $H^6(S;\bR)=\langle[\omega^3]\rangle$;
\end{itemize}
where $\eta=\alpha_6$, $\psi=\alpha_1\wedge\alpha_2\wedge\alpha_4-\alpha_1\wedge\alpha_4\wedge
\alpha_6-\alpha_2\wedge\alpha_3\wedge\alpha_5$. Note that 
$\omega^2\wedge \eta= d\gamma$, where $\gamma=2\alpha_1\wedge\alpha_2\wedge\alpha_4\wedge\alpha_5$.

Now consider $(S,\omega) \subset (\bC P^7,\omega_0)$ and the blow-up $\pi\colon\widetilde{\bC P}{}^7=
\mathrm{Bl}_S\bC P^7 \to \bC P^7$. Let $\nu$ be a Thom form for the exceptional divisor
$E\coloneq \pi^{-1}(S)$. This is supported on a tubular neighbourhood $V$ of $E$.
Consider the projections $\pi'\colon V\to E$, $\pi''\colon\pi(V)\to S$ and the composition $\varpi=\pi\circ \pi'=\pi''\circ \pi\colon V\to S$.
Finally, let $\omega_0'$ be a $2$-form on $\bC P^7$
cohomologous to $\omega_0$ but coinciding with $\pi''^*\omega$ on $\pi(V)$.
By \eqref{cohomology_blow_up}, we have the cohomology of $\widetilde{\bC P}{}^7$. 
\begin{itemize}
 \item $H^0(\widetilde{\bC P}{}^7;\bR)=\langle 1\rangle$;
 \item $H^1(\widetilde{\bC P}{}^7;\bR)=0$;
 \item $H^2(\widetilde{\bC P}{}^7;\bR)=\langle [\pi^*\omega_0'], [\nu]\rangle$;
 \item $H^3(\widetilde{\bC P}{}^7;\bR)=\langle [\varpi^*\eta\wedge \nu]\rangle$;
 \item $H^4(\widetilde{\bC P}{}^7;\bR)=\langle [\pi^*\omega_0'^2],
  [\pi^*\omega_0' \wedge\nu], [\nu^2]\rangle$;
 \item $H^5(\widetilde{\bC P}{}^7;\bR)=\langle 
 [\varpi^*(\omega \wedge \eta)\wedge \nu], [\varpi^*\psi\wedge\nu],
 [\varpi^*\eta\wedge \nu^2]\rangle$;
 \item $H^6(\widetilde{\bC P}{}^7;\bR)=\langle [\pi^*\omega_0'^3], 
 [\pi^*\omega_0'^2 \wedge \nu], [\pi^*\omega_0' \wedge \nu^2], [\nu^3]\rangle$;
  \item $H^7(\widetilde{\bC P}{}^7;\bR)=\langle [\varpi^*(\omega\wedge \psi)\wedge\nu],
   [\varpi^*(\omega \wedge \eta)\wedge \nu^2], [\varpi^*\psi\wedge\nu^2],
 [\varpi^*\eta\wedge \nu^3]\rangle$.
\end{itemize}
As in \cite{FM4}, we compute the minimal model 
$\varphi:(\bigwedge V,d) \to (\Omega^*(\widetilde{\bC P}{}^7),d)$. Up to degree $6$, this is
\begin{itemize}
 \item $V^1=0$;
 \item $V^2=C^2=\la x,y\ra$, $\varphi(x)=\pi^*\omega_0'$, $\varphi(y)=\nu$;
 \item $V^3=C^3=\la w\ra$, $\varphi(w)=\varpi^*\eta\wedge \nu$;
 \item $V^4=0$;
 \item $V^5= C^5=\la z\ra$, $\varphi(z)= \varpi^*\psi\wedge \nu$;
 \item $V^6= N^6= \la u\ra$, $du= x^2w$, $\varphi(u)=\varpi^*\gamma\wedge \nu$.
 \end{itemize}
We check that $\widetilde{\bC P}{}^7$ is $6$-formal (see Definition \ref{criterio1}). 
Take an element $\beta\in I(N^6)$ which
is closed, and we have to check that $[\varphi(\beta)]=0$ in $H^*(\widetilde{\bC P}{}^7;\bR)$. 
As $\beta$ can have degree at most $14$, it has to be $\beta= \beta'\cdot w\cdot u$, with
$\beta'\in  \bigwedge (x,y,z)$. But clearly $[\varphi(w\cdot u)]=0$ since 
$[\eta \wedge\gamma] =0 \in H^5(S;\bR)$.
By Theorem \ref{criterio2}, $\widetilde{\bC P}{}^7$ is formal.
\end{proof}

%
%
%
%
\begin{remark}
It seems quite hard to find a compact simply connected symplectic manifold which is formal and satisfies the Lefschetz property but is not K\"ahler.
\end{remark}

\begin{remark}
The manifold $N$ which appears in Table \ref{table:2}, line 3, carries a symplectic structure and a complex structure but no K\"ahler metric. For a discussion on manifolds which are 
simultaneously complex and symplectic but not K\"ahler, we refer to \cite[Section 1]{BFM1}.
\end{remark}

\section{Formality and the Lefschetz property in cosymplectic geometry}\label{FLP_CS}

A compact cosymplectic manifold $(M^{2n+1},\eta,\omega)$ is never simply connected. Indeed, consider the 1-form $\eta$; being 
closed, it defines a cohomology class $[\eta]\in H^1(M;\bR)$. If $\eta=df$, with $f\in C^\infty(M)$, by compactness of $M$ there exists $p\in M$ so that $\eta_p=0$. But this contradicts the 
condition $\eta(\xi)\equiv 1$. Alternatively, one can argue using the fact that $\eta\wedge\omega^n$ is a volume form, hence, if $\eta$ were exact, the same would be true for the volume form, which 
is absurd. In this way, one sees that $b_1(M)\geq 1$ on a cosymplectic manifold. This argument allows actually to conclude that $\eta\wedge\omega^k$ is a closed, non 
exact form, for $0\leq k\leq n$. Hence, $b_k(M)\geq 1$ for $0\leq k\leq 2n+1$ on a cosymplectic manifold.

CoK\"ahler manifolds are the odd-dimensional counterpart of K\"ahler manifolds. It is not a big surprise, therefore, that they satisfy as well strong topological conditions.

\begin{theorem}\label{Topology_coKahler}
 Let $(M^{2n+1},\phi,\eta,\xi,g)$ be a compact coK\"ahler manifold and let $\omega$ be the K\"ahler form. Then
 \begin{enumerate}[(i)]
  \item $b_1(M)$ is odd;
  \item the Lefschetz map $L^{n-k}\colon \cH^k(M)\to \cH^{2n+1-k}(M)$, 
  \begin{equation}\label{Lefschetz_map_coK}
   \alpha\mapsto \omega^{n-k+1}\wedge \imath_\xi\alpha+\omega^{n-k}\wedge\eta\wedge\alpha
  \end{equation}
is an isomorphism, $0\leq k\leq n$;
  \item the rational homotopy type of $M$ is formal. 
 \end{enumerate}
\end{theorem}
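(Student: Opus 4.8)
The plan is to reduce all three statements to the structure theory of compact coK\"ahler manifolds. The key input is the theorem of Li \cite{Li}, refined by Bazzoni and Oprea \cite{BO}: a compact coK\"ahler manifold $(M^{2n+1},\phi,\eta,\xi,g)$ is the mapping torus of a Hermitian isometry $\psi$ of finite order, say $k$, of a compact K\"ahler manifold $(N^{2n},J,h)$; in particular there is a degree-$k$ cyclic cover $q\colon\widehat M\to M$ with $\widehat M$ diffeomorphic to $N\times S^1$, so that $M=\widehat M/\bZ_k$ where $\bZ_k$ acts diagonally, by $\psi$ on the $N$ factor and by rotation on $S^1$, and under $q$ the form $\eta$ pulls back to the angular form $d\theta$, the K\"ahler form $\omega$ to the K\"ahler form $\omega_N$ of $N$, and the Reeb field $\xi$ to $\partial_\theta$. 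Since $\bZ_k$ is finite, $q^*$ identifies $H^\bullet(M;\bR)$ with the invariant part $H^\bullet(N\times S^1;\bR)^{\bZ_k}=H^\bullet(N;\bR)^{\psi^*}\ox H^\bullet(S^1;\bR)$ (rotations act trivially on $H^\bullet(S^1)$); here I read $\cH^\bullet(M)$ in the statement as de Rham cohomology, which on a compact coK\"ahler manifold is what the relevant harmonic theory computes. I would use this identification throughout.

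Statement $(i)$ is then immediate: by K\"unneth $H^1(N\times S^1;\bR)=H^1(N;\bR)\oplus\bR[d\theta]$, so $b_1(M)=\dim_\bR H^1(N;\bR)^{\psi^*}+1$. As $\psi$ is holomorphic, $\psi^*$ preserves the Hodge decomposition $H^1(N;\bC)=H^{1,0}(N)\oplus\overline{H^{1,0}(N)}$ and acts on the second summand as the conjugate of its action on the first; hence $H^1(N;\bR)^{\psi^*}\ox\bC=(H^{1,0}(N))^{\psi^*}\oplus\overline{(H^{1,0}(N))^{\psi^*}}$ is even-dimensional, and therefore $b_1(M)$ is odd.

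For $(ii)$ I would first verify the claim on the product $\widehat M=N\times S^1$ and then restrict to invariants. Write a (harmonic) class in $H^k(N\times S^1;\bR)$ uniquely as $a+d\theta\wedge b$ with $a,b$ pulled back from $N$, $a\in H^k(N;\bR)$, $b\in H^{k-1}(N;\bR)$; using $\imath_{\partial_\theta}d\theta=1$, formula \eqref{Lefschetz_map_coK} sends it to $\omega_N^{n-k+1}\wedge b+\omega_N^{n-k}\wedge d\theta\wedge a$, which under the decomposition $H^{2n+1-k}(N\times S^1;\bR)=H^{2n+1-k}(N;\bR)\oplus d\theta\wedge H^{2n-k}(N;\bR)$ equals $L_N^{n-k+1}(b)$ in the first summand and $d\theta\wedge L_N^{n-k}(a)$ in the second, where $L_N$ is the Lefschetz operator of $N$. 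By hard Lefschetz on the compact K\"ahler manifold $N$ (Theorem \ref{Topology_Kahler}$(ii)$) both $L_N^{n-k}\colon H^k(N)\to H^{2n-k}(N)$ and $L_N^{n-k+1}\colon H^{k-1}(N)\to H^{2n-k+1}(N)$ are isomorphisms for $0\le k\le n$, so \eqref{Lefschetz_map_coK} is an isomorphism on $N\times S^1$. Since $\omega,\eta,\xi$ are pulled back from $M$, this isomorphism commutes with the $\bZ_k$-action, hence restricts to an isomorphism of invariant subspaces, which is exactly \eqref{Lefschetz_map_coK} on $M$.

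For $(iii)$: $N$ is formal by Theorem \ref{Topology_Kahler}$(iii)$ and $S^1$ is formal, hence so is $\widehat M=N\times S^1$; the work is to push this through the finite quotient $M=\widehat M/\bZ_k$. One has $\Omega^\bullet(M)=\Omega^\bullet(N\times S^1)^{\bZ_k}$, and since $\bZ_k$ is finite the averaging projector $\frac1k\sum_g g^*$ makes $V\mapsto V^{\bZ_k}$ an exact endofunctor of $\bR[\bZ_k]$-cochain complexes, so it preserves quasi-isomorphisms. It therefore suffices to realize the formality of $N\times S^1$ by a $\bZ_k$-equivariant zig-zag of cdga quasi-isomorphisms. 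For the $S^1$ factor one uses the rotation-equivariant harmonic zig-zag $(\Omega^\bullet(S^1),d)\hookleftarrow(\bR\oplus\bR\,d\theta)\to(H^\bullet(S^1),0)$; for the K\"ahler factor one uses the $dd^c$-zig-zag of \cite{DGMS}, $(\Omega^\bullet(N),d)\hookleftarrow(\ker d^c,d)\twoheadrightarrow(H^\bullet_{d^c}(N),0)$, which is $\psi^*$-equivariant because $\psi$ holomorphic forces $\psi^*d^c=d^c\psi^*$. Tensoring the two zig-zags (diagonal action) and composing with the K\"unneth quasi-isomorphism yields a $\bZ_k$-equivariant zig-zag from $\Omega^\bullet(N\times S^1)$ to $(H^\bullet(N\times S^1;\bR),0)$; applying $(-)^{\bZ_k}$ gives one from $\Omega^\bullet(M)$ to $(H^\bullet(M;\bR),0)$, so $M$ is formal over $\bR$, hence formal, real and rational formality being equivalent (see \cite{FHT,FOT}). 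The step I expect to be the main obstacle is exactly this equivariance point: finite quotients of formal spaces need not be formal in general, and it is the $dd^c$-lemma that makes the K\"ahler formality compatible with $\psi$ so that it descends. (As a fallback one could show instead that all triple and higher Massey products of $M$ vanish, since they are pulled back from the formal space $\widehat M$, but that alone would not yield formality.)
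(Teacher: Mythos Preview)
The paper does not include its own proof of this theorem: it simply records that a proof can be found in \cite{CdLM}, and points to \cite{BLO} for ``a different perspective on how such properties can be deduced from the corresponding properties of compact K\"ahler manifolds''. Your proposal is exactly that second perspective. You invoke the Li--Bazzoni--Oprea structure theorem to pass to a finite cyclic cover $N\times S^1$ with $N$ compact K\"ahler, and then read off (i)--(iii) from Theorem~\ref{Topology_Kahler} for $N$ together with equivariance under the deck group. This is correct, and it is the route of \cite{BLO}; the primary reference \cite{CdLM} proceeds instead by developing Hodge theory directly on the coK\"ahler manifold (both $\eta$ and $\omega$ are parallel, the Reeb field is Killing, and one gets a harmonic decomposition on $M$ itself mirroring the K\"ahler identities), without ever passing to a cover. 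Your approach trades that analysis for the structure theorem as a black box, which is cleaner once the latter is available.

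Two small comments. First, you use $k$ both for the order of $\psi$ and for the cohomological degree in the Lefschetz map; this is harmless but worth renaming. Second, in (ii) the statement is phrased on harmonic forms $\cH^\bullet(M)$ rather than cohomology; you acknowledge this, and since on a compact manifold the two are identified and the map \eqref{Lefschetz_map_coK} is compatible with that identification, your argument on cohomology suffices to prove the isomorphism. The delicate point you flag in (iii)---that formality does not in general descend along finite quotients---is handled correctly: the $dd^c$-zigzag of \cite{DGMS} is $\psi^*$-equivariant precisely because $\psi$ is holomorphic, and taking $\bZ_k$-invariants is exact over $\bR$, so the zigzag descends. That is the substantive step, and you have it.
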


A proof of this result can be found in \cite{CdLM}; see also \cite{BLO} for a different perspective on how such properties can be deduced from the corresponding properties of compact K\"ahler 
manifolds. Here $\cH^*(M)$ denotes harmonic forms on the Riemannian manifold $(M,g)$; clearly $\cH^*(M)\cong H^*(M;\bR)$. The map \eqref{Lefschetz_map_coK} really sends harmonic forms to harmonic 
forms, as it is proved in \cite{CdLM}; recall that both $\eta$ and $\omega$ are parallel on a coK\"ahler manifold, hence harmonic.

Here as well, we want to answer questions such as

\begin{quote}
 Is there a compact cosymplectic manifold $M$ which satisfies $(i)$ and $(iii)$ above but not $(ii)$?
\end{quote}

Unfortunately, as we shall see in the next Section, the Lefschetz map can not be defined, in general, on arbitrary cosymplectic manifolds. We will identify a certain property, morally equivalent to 
the \emph{Lefschetz type} condition of Definition \ref{Lefschetz+Lefschetz_type}, and address the Lefschetz question in this setting.


\subsection{The Lefschetz property in cosymplectic geometry}\label{LP_C}

Let $(M,\omega)$ be a symplectic manifold. As we remarked above, the Lefschetz map \eqref{Lefschetz_map_forms} sends closed forms to closed forms, hence descends to cohomology, giving 
\eqref{Lefschetz_map}. In particular, the Lefschetz map can be defined on \emph{any} symplectic manifold. Of course, one needs then to use K\"ahler identities to prove that 
\eqref{Lefschetz_map} is an isomorphism in the K\"ahler case and we have seen that there are symplectic manifolds for which \eqref{Lefschetz_map} is not an isomorphism.

The cosymplectic case is much subtler. A first instance is that, differently from what happens in the K\"ahler case, the Lefschetz map on coK\"ahler manifolds is defined only on 
\emph{harmonic} forms. On a cosymplectic manifold, however, there is no metric. Let $(M,\eta,\omega)$ be a compact cosymplectic manifold; if one tries to define a map 
\begin{equation}\label{Lefschetz_map_cos}
 L^{n-k}\colon\Omega^k(M)\to \Omega^{2n+1-k}(M), \quad \alpha\mapsto \omega^{n-k+1}\wedge \imath_\xi\alpha+\omega^{n-k}\wedge\eta\wedge\alpha
\end{equation}
one sees that it \emph{does not send closed forms to closed forms!} Indeed, for $\alpha$ a closed $k$-form,
\begin{equation}\label{closed_non_closed}
 d(\omega^{n-k+1}\wedge \imath_\xi\alpha+\omega^{n-k}\wedge\eta\wedge\alpha)=\omega^{n-k+1}\wedge d(\imath_\xi\alpha),
\end{equation}
which is not zero in general.

\begin{example}\label{non_2_Lefschetz}
 Consider the 5-dimensional Lie algebra $\fg$ with non-trivial brackets
 \[
  [e_1,e_2]=-e_4, \quad [e_1,e_5]=-e_3,
 \]
with respect to a basis $\{e_1,\ldots,e_5\}$. The associated Chevalley-Eilenberg complex is then
 \[
  (\bigwedge(e^1,\ldots,e^5), \ de^3=e^{15}, de^4=e^{12}),
 \]
where $e^{ij}$ is a short-hand for $e^i\wedge e^j$. Set $\eta=e^5$ and $\omega=e^{13}-e^{24}$. Then $(\fg,\eta,\omega)$ is a cosymplectic Lie algebra; the Reeb field is $\xi=e_5$. Let $G$ 
denote the simply connected nilpotent Lie group with Lie algebra $\fg$. Since the Lie algebra $\fg$ is defined over $\bQ$, $G$ contains a lattice $\Gamma$; hence $N\coloneq\Gamma\backslash G$ is a 
compact nilmanifold. The cosymplectic structure on $\fg$ gives a left-invariant cosymplectic structure on $G$, which descends to $N$. Hence 
$(N,\eta,\omega)$ is a cosymplectic nilmanifold. By Nomizu's theorem, $(\bigwedge\fg^*,d)\hookrightarrow(\Omega^*(N),d)$ is a quasi isomorphism. We study the Lefschetz map on 
$(\bigwedge\fg^*,d)$, i.e.
\[
 L^{5-k}\colon \bigwedge\nolimits^k\fg^*\longrightarrow\bigwedge\nolimits^{5-k}\fg^*,
\]
$0\leq k\leq 2$. A computation shows that $L^{4}$ sends closed forms to closed forms. However, the closed 2-form $\alpha=e^{35}$ is sent to $\beta=-e^{234}$, which is not closed, 
since $d\beta=e^{1245}$.
\end{example}

A way to bypass this difficulty would be to work with forms on $M$ which are preserved by the flow of the Reeb field $\xi$, that is, to consider the differential subalgebra 
$\Omega^*_\xi(M)$ of $\Omega^*(M)$, defined by
\[
 \Omega^k_\xi(M)=\{\alpha\in\Omega^k(M) \ | \ \cL_\xi\alpha=0\}.
\]
\begin{lemma}
The Lefschetz map \eqref{Lefschetz_map_cos} restricts to a map 
 \begin{equation}\label{Lefschetz_map_Kcos}
 L^{n-k}\colon\Omega^k_\xi(M)\to \Omega^{2n+1-k}_\xi(M)
\end{equation}
which sends closed forms to closed forms.
\end{lemma}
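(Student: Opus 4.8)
The plan is to exploit formula \eqref{closed_non_closed}, which already isolates the obstruction to the Lefschetz map sending closed forms to closed forms: for a closed $k$-form $\alpha$ on the cosymplectic manifold $(M^{2n+1},\eta,\omega)$ one has $d(L^{n-k}\alpha)=\omega^{n-k+1}\wedge d(\imath_\xi\alpha)$, where I abbreviate $L^{n-k}\alpha=\omega^{n-k+1}\wedge\imath_\xi\alpha+\omega^{n-k}\wedge\eta\wedge\alpha$. The whole point will be to show that when $\alpha\in\Omega^k_\xi(M)$ the term $d(\imath_\xi\alpha)$ vanishes, so that $d(L^{n-k}\alpha)=0$; and separately that $L^{n-k}\alpha$ again lies in $\Omega^{2n+1-k}_\xi(M)$. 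Both facts rest on the Cartan formula $\cL_\xi=d\,\imath_\xi+\imath_\xi\,d$ together with the closedness $d\eta=0=d\omega$.

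First I would check that $L^{n-k}$ preserves $\Omega^*_\xi(M)$. Since $\cL_\xi$ is a derivation that commutes with $d$ and with contraction by $\xi$ itself, and since $\cL_\xi\eta=\imath_\xi d\eta+d(\imath_\xi\eta)=d(1)=0$ and $\cL_\xi\omega=\imath_\xi d\omega+d(\imath_\xi\omega)=0$ (using $d\omega=0$, so $\cL_\xi\omega=d(\imath_\xi\omega)$; one needs here that $\imath_\xi\omega=0$, which holds because $\omega(\xi,\cdot)=g(\xi,\phi\cdot)=g(\phi\xi,\phi^2\cdot)=0$ as $\phi\xi=0$ — alternatively one notes $\cL_\xi\omega=0$ is part of the standard structure equations, or simply observes that $\Omega^*_\xi(M)$ is a subalgebra and $\omega,\eta\in\Omega^*_\xi(M)$). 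Then for $\alpha\in\Omega^k_\xi(M)$ we get $\cL_\xi(\imath_\xi\alpha)=\imath_\xi\cL_\xi\alpha=0$, so each wedge factor in $L^{n-k}\alpha$ is $\cL_\xi$-invariant, hence so is the product. This shows \eqref{Lefschetz_map_Kcos} lands in $\Omega^{2n+1-k}_\xi(M)$.

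Next, the closedness claim. For $\alpha\in\Omega^k_\xi(M)$ closed, Cartan's formula gives $0=\cL_\xi\alpha=d(\imath_\xi\alpha)+\imath_\xi(d\alpha)=d(\imath_\xi\alpha)$, so $d(\imath_\xi\alpha)=0$ outright. Plugging this into \eqref{closed_non_closed} yields $d(L^{n-k}\alpha)=\omega^{n-k+1}\wedge d(\imath_\xi\alpha)=0$. (For completeness one should re-derive \eqref{closed_non_closed}: expand $d(\omega^{n-k+1}\wedge\imath_\xi\alpha+\omega^{n-k}\wedge\eta\wedge\alpha)$ using $d\omega=0$, $d\eta=0$, $d\alpha=0$, so only the $\omega^{n-k+1}\wedge d(\imath_\xi\alpha)$ and $\pm\,\omega^{n-k}\wedge\eta\wedge d\alpha=0$ terms survive, and the former is exactly what is claimed.) This completes the argument.

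The only genuinely delicate point is the bookkeeping in re-deriving \eqref{closed_non_closed} — getting the signs and the cancellation of the cross terms right — but since the statement \eqref{closed_non_closed} is already recorded in the excerpt I may simply cite it. The conceptual content is entirely the single identity $\cL_\xi\alpha=d\,\imath_\xi\alpha$ for closed $\alpha$, so there is no real obstacle; the lemma is essentially the observation that the failure term in \eqref{closed_non_closed} is precisely $\omega^{n-k+1}\wedge d\imath_\xi\alpha$, and that $\cL_\xi\alpha=0$ kills it.
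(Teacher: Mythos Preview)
Your argument is correct and follows essentially the same route as the paper: both parts rest on Cartan's formula and the invariance $\cL_\xi\eta=\cL_\xi\omega=0$, and the closedness claim is handled in both cases by observing that $\cL_\xi\alpha=0$ with $d\alpha=0$ forces $d(\imath_\xi\alpha)=0$, which kills the obstruction term in \eqref{closed_non_closed}. The only cosmetic difference is that you invoke the commutation $\cL_\xi\imath_\xi=\imath_\xi\cL_\xi$ directly to get $\cL_\xi(\imath_\xi\alpha)=0$, whereas the paper expands $\cL_\xi(\imath_\xi\alpha)$ via Cartan's formula and uses $\imath_\xi\imath_\xi=0$; these are equivalent one-line manipulations.
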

\begin{proof}
 Assume that $\alpha\in\Omega^k_\xi(M)$; hence $\cL_\xi\alpha=0$ and $d(\imath_\xi\alpha)=-\imath_\xi d\alpha$. We need to prove that $L^{n-k}(\alpha)\in \Omega^{2n+1-k}_\xi(M)$. Recall that 
$\cL_\xi\omega=0=\cL_\xi\eta$ on a cosymplectic manifold. We compute
 \begin{align*}
  \cL_\xi(L^{n-k}(\alpha))&=\cL_\xi(\omega^{n-k+1}\wedge \imath_\xi\alpha)+\cL_\xi(\omega^{n-k}\wedge\eta\wedge\alpha)=\\
  &=\omega^{n-k+1}\wedge\cL_\xi(\imath_\xi\alpha)+\omega^{n-k}\wedge\eta\wedge\cL_\xi\alpha=\\
  &=\omega^{n-k+1}\wedge(d\imath_\xi+\imath_\xi d)(\imath_\xi\alpha)=\\
  &=-\omega^{n-k+1}\wedge \imath_\xi\imath_\xi d\alpha=\\
  &=0.
 \end{align*}
 Also, if $\alpha\in\Omega^k_\xi(M)$ is closed, then \eqref{closed_non_closed} shows that $d(L^{n-k}(\alpha))=\omega^{n-k+1}\wedge d(\imath_\xi\alpha)$. Since $\cL_\xi\alpha=0$, we can switch $d$ and 
$\imath_\xi$, obtaining $d(L^{n-k}(\alpha))=0$.
\end{proof}
Everything works just fine, but there is of course a problem: the differential graded algebra $(\Omega^*_\xi(M),d)\subset (\Omega^*(M),d)$ does not compute, in general, the de Rham cohomology of $M$.
The question arises, whether there exists a class of almost coK\"ahler structures for which the inclusion $(\Omega^*_\xi(M),d)\hookrightarrow (\Omega^*(M),d)$ is a quasi isomorphism, 
i.e. 
for which $H^*_\xi(M)\coloneq H^*(\Omega^*_\xi(M),d)$ satisfies
\[
 H^*_\xi(M)\cong H^*(M;\bR).
\]

Recall that an almost coK\"ahler structure $(\phi,\eta,\xi,g)$ is K-cosymplectic if the Reeb field is Killing. K-cosymplectic structures have been extensively studied in \cite{BG}; the 
inspiration there came from the contact (metric) case, where one defines a K-contact structure as a contact metric structure 
whose Reeb field is Killing.

On a K-cosymplectic manifold $M$, the 1-dimensional distribution defined by $\xi$ integrates to a Riemannian foliation $\cF_\xi$, whose leaf through $x\in M$ is the 
flowline of $\xi$.

\begin{definition}
 The \emph{basic cohomology} of $\cF_\xi$ is the cohomology of the differential subalgebra
 \[
  \Omega^*(M;\cF_\xi)=\{\alpha\in\Omega^*(M) \ | \ \imath_\xi\alpha=0=\imath_\xi d\alpha\}.
 \]
\end{definition}

We denote by $H^*(M;\cF_\xi)$ the basic cohomology. One can think of it as the cohomology of the space of leaves $M/\cF_\xi$. We collect in the following Proposition the relevant 
features of K-cosymplectic structures (compare \cite[Theorem 4.3]{BG}):
\begin{proposition}\label{harmonic}
Let $(\phi,\eta,\xi,g)$ be a K-cosymplectic structure on a compact manifold $M^{2n+1}$. Then
\begin{enumerate}
\item the inclusion $(\Omega^*_\xi(M),d)\hookrightarrow (\Omega^*(M),d)$ induces an isomorphism $H^*_\xi(M)\cong \cH^*(M)$; in particular the cdga $(\Omega^*_\xi(M),d)$ computes the de 
Rham cohomology of $M$;
\item there is a splitting
\begin{equation}\label{splitting}
H^k_\xi(M)=H^k(M;\cF_\xi)\oplus \eta\wedge H^{k-1}(M;\cF_\xi);
\end{equation}
\item $H^{2n}(M;\cF_\xi)\cong\bR$ and $H^{2n+1}(M;\cF_\xi)=0$.
\end{enumerate}
\end{proposition}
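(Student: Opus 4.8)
The plan is to deduce everything from the theory of Riemannian foliations, in particular from the structure of the basic cohomology of the one-dimensional Riemannian foliation $\cF_\xi$ generated by the Killing Reeb field. First I would recall that, since $\xi$ is Killing and unit, the foliation $\cF_\xi$ is Riemannian and moreover \emph{isometric}: its flow consists of isometries. A standard averaging argument then shows that the inclusion $(\Omega^*_\xi(M),d)\hookrightarrow(\Omega^*(M),d)$ is a quasi isomorphism. Concretely, because $M$ is compact and $\xi$ is Killing, one can average a form over the flow of $\xi$; but more robustly, one uses that the closure of the one-parameter group generated by $\xi$ in the isometry group $\mathrm{Isom}(M,g)$ is a compact torus $T$ acting on $M$, and averaging over $T$ gives a chain homotopy between the identity on $\Omega^*(M)$ and the projection onto $T$-invariant forms, which are in particular $\xi$-invariant; a second averaging over the closure of the flow line itself (or a direct Cartan-homotopy-formula computation, using $\cL_\xi = d\imath_\xi + \imath_\xi d$ and that $\imath_\xi$ is a chain homotopy when restricted appropriately) identifies $H^*_\xi(M)$ with $\cH^*(M)\cong H^*(M;\bR)$. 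This proves part (1).

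For part (2), I would use that on a K-cosymplectic manifold $\eta$ is $\xi$-invariant, closed, and satisfies $\imath_\xi\eta=1$, so it gives a trivialization of the "fiber" direction of the foliation. Concretely, every $\alpha\in\Omega^k_\xi(M)$ decomposes uniquely as $\alpha = \alpha_0 + \eta\wedge\alpha_1$ with $\imath_\xi\alpha_0 = \imath_\xi\alpha_1 = 0$ (just set $\alpha_1 = \imath_\xi\alpha$ and $\alpha_0 = \alpha - \eta\wedge\imath_\xi\alpha$), and $\cL_\xi\alpha = 0$ together with $\cL_\xi\eta = 0$ forces $\cL_\xi\alpha_0 = \cL_\xi\alpha_1 = 0$; combined with $\imath_\xi\alpha_i = 0$ this gives $\imath_\xi d\alpha_i = \cL_\xi\alpha_i - d\imath_\xi\alpha_i = 0$, so $\alpha_0,\alpha_1\in\Omega^*(M;\cF_\xi)$. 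Since $d\eta = 0$, the differential respects this bigrading: $d\alpha = d\alpha_0 - \eta\wedge d\alpha_1$, and this shows $(\Omega^*_\xi(M),d)$ is, as a complex, the mapping cone (with zero connecting map, because $d\eta=0$) of the identity on $(\Omega^*(M;\cF_\xi),d)$ — equivalently the tensor product of $(\Omega^*(M;\cF_\xi),d)$ with $(\bigwedge(\eta),0)$. Taking cohomology yields the splitting \eqref{splitting}.

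For part (3), I would invoke Poincar\'e duality for the basic cohomology of a transversally oriented Riemannian foliation that is also \emph{taut} (equivalently, whose basic cohomology in top transverse degree is nonzero): the foliation $\cF_\xi$ is transversally oriented (by $\omega^n$) and taut because $\xi$ is Killing, hence geodesic and of constant length, so the leaves are minimal submanifolds. Then $H^{2n}(M;\cF_\xi)\cong\bR$ by transverse Poincar\'e duality (the transverse dimension is $2n$), and $H^{2n+1}(M;\cF_\xi) = 0$ for degree reasons — any basic $(2n+1)$-form $\beta$ satisfies $\imath_\xi\beta = 0$, so $\beta$ lies in the image of $\bigwedge^{2n+1}$ of a rank-$2n$ transverse bundle, forcing $\beta = 0$; alternatively combine \eqref{splitting} in degree $2n+1$ with $H^{2n+1}_\xi(M)\cong H^{2n+1}(M;\bR)\cong\bR$ and $H^{2n}(M;\cF_\xi)\cong\bR$ to get $H^{2n+1}(M;\cF_\xi) = 0$. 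The main obstacle is part (1): making the averaging/quasi-isomorphism argument precise requires the fact that the flow of a Killing field on a compact manifold has compact closure in the isometry group, plus a careful homotopy argument; once that is in hand, parts (2) and (3) are essentially bookkeeping with the $\eta$-decomposition and standard foliated Poincar\'e duality. Since this is exactly \cite[Theorem 4.3]{BG}, I would ultimately cite that result, presenting the above as the outline of its proof.
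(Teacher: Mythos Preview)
Your proposal is correct and in fact goes beyond what the paper does: the paper does not prove this proposition at all but simply refers the reader to \cite[Theorem 4.3]{BG}, as you yourself note at the end. Your outline---averaging over the torus closure of the Reeb flow in $\mathrm{Isom}(M,g)$ for part (1), the $\alpha=\alpha_0+\eta\wedge\alpha_1$ decomposition for part (2), and tautness plus the rank count $\ker\eta\cong\bR^{2n}$ for part (3)---is exactly the argument one finds in \cite{BG}, so there is nothing to compare.
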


Here we interpret $\eta$ as a harmonic $1$-form.
As a consequence, the Lefschetz map \eqref{Lefschetz_map_Kcos} on 
a K-cosymplectic manifold $(M^{2n+1},\phi,\eta,\xi,g)$ descends to a map 
\begin{equation}\label{LKC_C}
 L^{n-k}\colon H^k_\xi(M)\to H^{2n+1-k}_\xi(M), \quad \alpha\mapsto \omega^{n-k+1}\wedge\imath_\xi\alpha+\omega^{n-k}\wedge\eta\wedge\alpha
\end{equation}
which can, or not, be an isomorphism.

From now on, we restrict to K-cosymplectic manifolds in order to study the Lefschetz map.

\begin{definition}
Let $M^{2n+1}$ be a compact manifold endowed with a K-cosymplectic structure $(\phi,\eta,\xi,g)$. We say that 
$M$ has the \emph{Lefschetz property} if \eqref{LKC_C} is an isomorphism for $0\leq k\leq n$. We say that $M$ is of \emph{Lefschetz type} if \eqref{LKC_C} is an isomorphism for $k=1$.
\end{definition}

\begin{proposition}
 Let $M^{2n+1}$ be a compact manifold endowed with a K-cosymplectic structure $(\phi,\eta,\xi,g)$. Assume that $M$ satisfies is of Lefschetz type. Then $b_1(M)$ is odd.
\end{proposition}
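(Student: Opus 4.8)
The plan is to reduce the statement to the evenness of $\dim H^1(M;\cF_\xi)$ and then to extract from the Lefschetz-type hypothesis a non-degenerate alternating form on that space. For the reduction: $M$ is connected, so $H^0(M;\cF_\xi)=\bR$ and $\eta$ is a non-exact closed $1$-form, whence the splitting \eqref{splitting} with $k=1$ reads $H^1_\xi(M)=H^1(M;\cF_\xi)\oplus\bR[\eta]$, so that $\dim H^1_\xi(M)=\dim H^1(M;\cF_\xi)+1$. By Proposition~\ref{harmonic}(1) we have $\dim H^1_\xi(M)=\dim\cH^1(M)=b_1(M)$. Hence $b_1(M)$ is odd if and only if $\dim H^1(M;\cF_\xi)$ is even.

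Next I would consider the alternating bilinear form
\[
 Q\colon H^1(M;\cF_\xi)\times H^1(M;\cF_\xi)\longrightarrow\bR,\qquad Q([\beta_1],[\beta_2])=\int_M\eta\wedge\omega^{n-1}\wedge\beta_1\wedge\beta_2,
\]
which is well defined because $\eta$, $\omega$ and all closed basic $1$-forms are closed, so replacing a $\beta_i$ by a basic exact $1$-form changes the integrand by an exact form. If $Q$ is non-degenerate, then $H^1(M;\cF_\xi)$ is a symplectic vector space and therefore even-dimensional, which together with the reduction above finishes the proof. To see that $Q$ is non-degenerate I would relate it to the Lefschetz map on $H^1_\xi(M)$ and to ordinary Poincaré duality on $M$. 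First note that $\omega$ is basic ($d\omega=0$ and $\imath_\xi\omega=0$, the latter being standard for almost contact metric structures), so all the classes below lie where claimed. View $H^1(M;\cF_\xi)\subset H^1_\xi(M)$ via \eqref{splitting}; for a closed basic $1$-form $\beta$ one has $\imath_\xi\beta=0$, so \eqref{LKC_C} gives $L^{n-1}[\beta]=[\omega^{n-1}\wedge\eta\wedge\beta]\in H^{2n}_\xi(M)$. Writing $\la\,\cdot\,,\,\cdot\,\ra\colon H^{2n}_\xi(M)\times H^1_\xi(M)\to\bR$ for $([\gamma],[\alpha])\mapsto\int_M\gamma\wedge\alpha$, which under the ring isomorphism $H^*_\xi(M)\cong H^*(M;\bR)$ of Proposition~\ref{harmonic}(1) is the cup-product pairing and hence non-degenerate by Poincaré duality on the compact orientable manifold $M$, we obtain $Q([\beta_1],[\beta_2])=\la L^{n-1}[\beta_1],[\beta_2]\ra$ for all basic classes.

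The crucial observation is now that $\la L^{n-1}[\beta_1],[\eta]\ra=\int_M\eta\wedge\omega^{n-1}\wedge\beta_1\wedge\eta=0$ for every $[\beta_1]\in H^1(M;\cF_\xi)$, simply because $\eta\wedge\eta=0$. Combined with $H^1_\xi(M)=H^1(M;\cF_\xi)\oplus\bR[\eta]$, this shows that if $[\beta_1]$ lies in the radical of $Q$ then $\la L^{n-1}[\beta_1],[\alpha]\ra=0$ for all $[\alpha]\in H^1_\xi(M)$; by non-degeneracy of $\la\,\cdot\,,\,\cdot\,\ra$ this forces $L^{n-1}[\beta_1]=0$, and since $M$ is of Lefschetz type, $L^{n-1}$ is injective on $H^1_\xi(M)$, so $[\beta_1]=0$. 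Hence $Q$ is non-degenerate and we are done. I expect no real obstacle beyond this sign trick: the only points demanding care are the basic-cohomology set-up and the splitting \eqref{splitting} (in particular that $\omega$ is basic, and that the pairing on $H^*_\xi(M)$ is genuinely the cup product), all of which come from Proposition~\ref{harmonic}. It is perhaps worth stressing that, thanks to the vanishing $\la L^{n-1}[\beta_1],[\eta]\ra=0$, one never needs transverse (basic) Poincaré duality, only Poincaré duality on $M$ itself.
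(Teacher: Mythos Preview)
Your proof is correct and follows the same overall strategy as the paper: reduce via the splitting \eqref{splitting} to showing $\dim H^1(M;\cF_\xi)$ is even, then exhibit a non-degenerate alternating form on $H^1(M;\cF_\xi)$ built from $\omega^{n-1}$. The paper considers the pairing $\Psi(\alpha,\beta)=\omega^{n-1}\wedge\alpha\wedge\beta$ with values in $H^{2n}(M;\cF_\xi)\cong\bR$ and, after deducing from the Lefschetz-type hypothesis that $\omega^{n-1}\wedge\alpha\neq 0$ in $H^{2n-1}(M;\cF_\xi)$ for nonzero basic $\alpha$, declares $\Psi$ ``clearly'' non-degenerate; this last step implicitly uses transverse (basic) Poincar\'e duality for the Riemannian foliation $\cF_\xi$. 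Your pairing $Q$ is essentially the same object, but integrated over $M$ against $\eta$, and you give a sharper justification of non-degeneracy: the observation $\la L^{n-1}[\beta_1],[\eta]\ra=0$ (from $\eta\wedge\eta=0$) lets you pass from vanishing of $Q([\beta_1],\cdot)$ on $H^1(M;\cF_\xi)$ to vanishing of the cup-product pairing of $L^{n-1}[\beta_1]$ against all of $H^1_\xi(M)\cong H^1(M;\bR)$, so that ordinary Poincar\'e duality on $M$ and injectivity of $L^{n-1}$ finish the job. This is a small but genuine improvement, since it avoids invoking basic Poincar\'e duality, which is not among the stated consequences of Proposition~\ref{harmonic}.
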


\begin{proof}
The splitting \eqref{splitting} tells us that $H^1_\xi(M)=H^1(M;\cF_\xi)\oplus \eta$ and that $H^{2n}_\xi(M)=H^{2n}(M;\cF_\xi)\oplus\eta\wedge H^{2n-1}(M;\cF_\xi)$. By assumption, $L^{n-1}\colon 
H^1_\xi(M)\to H^{2n}_\xi(M)$ is an isomorphism. Its restriction to $H^1(M;\cF_\xi)$ sends $\alpha$ to $\omega^{n-1}\wedge\eta\wedge\alpha\in \eta\wedge H^{2n-1}(M;\cF_\xi)$. In particular, 
$\omega^{n-1}\wedge\alpha\neq 0$. Now consider the bilinear map
\[
 \Psi\colon H^1(M;\cF_\xi)\otimes H^1(M;\cF_\xi)\to H^{2n}(M;\cF_\xi)\cong \bR, \quad (\alpha,\beta)\mapsto \omega^{n-1}\wedge\alpha\wedge\beta.
\]
$\Psi$ is clearly skew-symmetric and non-degenerate. Hence $\dim H^1(M;\cF_\xi)$ is even. Since $b_1(M)=\dim H^1(M;\cF_\xi)+1$, the thesis follows.
\end{proof}

We have some kind of converse to this result:

\begin{proposition}\label{b_1=1}
Suppose $M$ is a compact manifold endowed with a K-cosymplectic structure $(\phi,\eta,\xi,g)$. If $b_1(M)=1$, then $M$ is of Lefschetz type.
\end{proposition}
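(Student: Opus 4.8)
The plan is to turn the statement into a one--dimensional linear--algebra fact by using Proposition~\ref{harmonic}, and then to evaluate the map \eqref{LKC_C} with $k=1$ on an explicit generator. Concretely, I first record that by Proposition~\ref{harmonic}(1) the inclusion $(\Omega^*_\xi(M),d)\hookrightarrow(\Omega^*(M),d)$ gives $H^*_\xi(M)\cong\cH^*(M)\cong H^*(M;\bR)$, so the hypothesis $b_1(M)=1$ yields $\dim H^1_\xi(M)=1$; and, since a (K-)cosymplectic manifold $M^{2n+1}$ is compact and orientable, Poincar\'e duality together with the same isomorphism gives $\dim H^{2n}_\xi(M)=b_{2n}(M)=b_1(M)=1$. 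Thus $L^{n-1}\colon H^1_\xi(M)\to H^{2n}_\xi(M)$ is a linear map between two $1$-dimensional spaces, and it suffices to exhibit a generator of the source on which it does not vanish.

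For the source, I would use $\eta$ itself. Since $\cL_\xi\eta=d\,\imath_\xi\eta+\imath_\xi\,d\eta=0$, we have $\eta\in\Omega^1_\xi(M)$; it is closed, and it is not exact, because if $\eta=df$ then by compactness $f$ has a critical point $p$, whence $\eta_p=0$, contradicting $\eta(\xi)\equiv1$. Hence $[\eta]$ generates $H^1_\xi(M)$. For the target, note that $\cL_\xi\omega=0$ on a cosymplectic manifold gives $\omega^n\in\Omega^{2n}_\xi(M)$; it is closed, and it is not exact, since if $\omega^n=d\beta$ then, using $d\eta=0$, we would get $\eta\wedge\omega^n=-d(\eta\wedge\beta)$, exact, contradicting that $\eta\wedge\omega^n$ is a volume form on the compact manifold $M$ (this is exactly the argument already recorded at the start of Section~\ref{FLP_CS}). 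Hence $[\omega^n]$ generates $H^{2n}_\xi(M)$.

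The computation that closes the argument is then immediate: substituting $\alpha=\eta$ into \eqref{LKC_C} with $k=1$,
\[
L^{n-1}([\eta])=\bigl[\,\omega^{n}\wedge\imath_\xi\eta+\omega^{n-1}\wedge\eta\wedge\eta\,\bigr]=[\omega^n],
\]
because $\imath_\xi\eta=\eta(\xi)=1$ and $\eta\wedge\eta=0$. So $L^{n-1}$ sends the generator $[\eta]$ of $H^1_\xi(M)$ to the generator $[\omega^n]$ of $H^{2n}_\xi(M)$ and is therefore an isomorphism; that is, $M$ is of Lefschetz type.

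I do not anticipate a real obstacle: the proof is short once Proposition~\ref{harmonic} is in hand. The only points that need a little care are the bookkeeping that makes both $H^1_\xi(M)$ and $H^{2n}_\xi(M)$ one--dimensional (so that a single nonzero value of $L^{n-1}$ suffices) and the verification that $[\omega^n]\neq0$ in $H^{2n}_\xi(M)$, which reduces to the standard ``$\eta\wedge\omega^n$ is a non-exact volume form'' observation; everything else is the direct evaluation above.
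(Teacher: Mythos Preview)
Your proof is correct and follows the same line as the paper's: show that $H^1_\xi(M)$ is generated by $[\eta]$ and compute $L^{n-1}([\eta])=[\omega^n]\neq 0$. You are simply more explicit than the paper about why both $H^1_\xi(M)$ and $H^{2n}_\xi(M)$ are one-dimensional (invoking Proposition~\ref{harmonic}(1) and Poincar\'e duality), which the paper leaves implicit.
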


\begin{proof}
Since $M$ is compact and $b_1(M)=1$, $H^1_\xi(M)=\eta$. The Lefschetz map \eqref{LKC_C} sends $\eta$ to $\omega^n\in H^{2n}_\xi(M)$, which is non-zero.
\end{proof}

\begin{remark}
It is known that $b_1$ is odd on a compact manifold endowed with a coK\"ahler structure; however, nothing can be said on the higher odd-degree Betti numbers, even up to middle dimension. Consider, 
for instance, the manifold $M\times M\times S^1$, where $M$ is the $K3$ surface. This is clearly coK\"ahler and has $b_3=44$. This is why, in Table \ref{table:4} below, the column about 
Betti numbers has been 
removed.
\end{remark}

We show next that K-cosymplectic manifolds abund. Indeed, the following holds (see \cite[Proposition 2.12]{BG}):

\begin{proposition}\label{K-cosymplectic_mapping_torus}
 Let $(K,h,\omega)$ be a compact almost K\"ahler manifold and let $\varphi\colon K\to K$ be a diffeomorphism such that 
$\varphi^*h=h$ and $\varphi^*\omega=\omega$. Then the mapping torus\footnote{Given a topological space $X$ and a homeomorphism $\varphi\colon X\to X$, the \emph{mapping torus} 
$X_\varphi$ is the quotient space $\frac{X\times [0,1]}{(x,0)\sim (\varphi(x),1)}$.} $K_\varphi$ has a natural K-cosymplectic structure.
\end{proposition}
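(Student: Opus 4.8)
The plan is to produce the K-cosymplectic structure on $K_\varphi$ directly from the almost K\"ahler data on $K$. Write $K_\varphi=(K\times\bR)/\bZ$, where $\bZ$ acts by the deck transformation $(x,t)\mapsto(\varphi(x),t+1)$, and recall that $K_\varphi$ fibres over $S^1=\bR/\bZ$ with fibre $K$, via a map $p\colon K_\varphi\to S^1$. Let $J$ be the almost complex structure of $K$, so that $\omega(\cdot,\cdot)=h(\cdot,J\cdot)$. The first observation is that the hypotheses $\varphi^*h=h$ and $\varphi^*\omega=\omega$ force $d\varphi\circ J=J\circ d\varphi$: comparing $\varphi^*\omega(X,Y)=h(d\varphi X,J\,d\varphi Y)$ with $\omega(X,Y)=h(X,JY)$ and using $\varphi^*h=h$ yields $(d\varphi)^{-1}\circ J\circ d\varphi=J$. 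Hence on $K\times\bR$ the tensors $g_0=\mathrm{pr}_K^*h+dt\otimes dt$, $\eta_0=dt$, $\xi_0=\partial_t$, and $\phi_0$ (acting as the pullback of $J$ on the $TK$-summand and as $0$ on $\partial_t$) are all invariant under the deck transformation: the $\bR$-direction pieces are translation-invariant, and the $K$-direction pieces are $\varphi$-invariant by the preceding identity together with $\varphi^*h=h$ and $\varphi^*\omega=\omega$. Therefore they descend to well-defined tensors $(\phi,\eta,\xi,g)$ on $K_\varphi$, with $\eta=p^*(d\theta)$ for $d\theta$ the standard unit $1$-form on $S^1$, and $\xi$ the infinitesimal generator of the suspension flow.

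Next I would verify that $(\phi,\eta,\xi,g)$ is an almost contact metric structure. Since all the relevant identities are pointwise, it suffices to check them on $K\times\bR$, where each splits into a $TK$-part and a $\partial_t$-part. One has $\eta(\xi)=dt(\partial_t)=1$; the relation for $\phi^2$ holds because $\phi_0$ restricts to $J$ on $TK$, while on $\bR\partial_t$ the required identity is handled directly since both sides are computed explicitly; and the compatibility $g(\phi_0\cdot,\phi_0\cdot)=g_0(\cdot,\cdot)-\eta_0\otimes\eta_0$ reduces on $TK$ to the $h$-orthogonality of $J$, the mixed and $\partial_t$ terms being immediate from the product structure of $g_0$. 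Thus $(\phi,\eta,\xi,g)$ is an almost contact metric structure on $K_\varphi$.

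It remains to check the almost coK\"ahler condition and that $\xi$ is Killing. The K\"ahler form $\omega_{\mathrm{ac}}(\cdot,\cdot)=g(\cdot,\phi\cdot)$ of this structure equals, on $TK$, $h(v,Jw)=\omega(v,w)$, while $\imath_\xi\omega_{\mathrm{ac}}=0$ by the product structure; hence $\omega_{\mathrm{ac}}$ is the descent of $\mathrm{pr}_K^*\omega$. Therefore $d\eta=d(p^*d\theta)=0$, and $d\omega_{\mathrm{ac}}$ is the descent of $\mathrm{pr}_K^*(d\omega)=0$, using that $(K,h,\omega)$ is almost K\"ahler; so the structure is almost coK\"ahler. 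Finally $\cL_{\partial_t}g_0=0$ because $g_0$ is independent of $t$, and this descends to $\cL_\xi g=0$, so $\xi$ is Killing. Consequently $(\phi,\eta,\xi,g)$ is a K-cosymplectic structure on $K_\varphi$.

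The only step that genuinely uses the full strength of the hypotheses, rather than just $\varphi$ being a diffeomorphism, is the descent in the first paragraph: without $\varphi^*h=h$ the metric would not pass to the quotient, and without $\varphi^*\omega=\omega$ neither $J$ nor $\omega_{\mathrm{ac}}$ would; everything afterwards is a routine pointwise computation reducing to the almost Hermitian identities on $K$. Hence I expect well-definedness of the tensors on $K_\varphi$ to be the main (and essentially the only) point requiring care.
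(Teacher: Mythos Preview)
Your argument is correct: the construction of $(\phi,\eta,\xi,g)$ on $K_\varphi$ by descent from $K\times\bR$, the verification of the almost contact metric axioms, of $d\eta=0=d\omega_{\mathrm{ac}}$, and of $\cL_\xi g=0$, all go through as you describe. The key observation that $\varphi^*h=h$ together with $\varphi^*\omega=\omega$ forces $d\varphi\circ J=J\circ d\varphi$ (so that $\phi_0$ descends) is exactly the point that makes the descent work, and you have justified it properly.

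As for comparison with the paper: the paper does not actually prove this proposition but simply refers the reader to \cite[Proposition~2.12]{BG}. Your write-up is therefore a self-contained proof where the paper has none, following what is the natural (and essentially the only) approach: pull back the almost K\"ahler data to $K\times\bR$, add the obvious $\bR$-direction data, check invariance under the deck transformation, and verify the axioms pointwise on the cover. There is no alternative route to speak of here.

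One small cosmetic remark: the paper's Section~\ref{intro:cosymp} writes the almost contact identity as $\phi^2=\Id-\eta\otimes\xi$, but for $\phi_0|_{TK}=J$ one of course gets $\phi_0^2|_{TK}=-\Id$, so the intended convention is the standard $\phi^2=-\Id+\eta\otimes\xi$. Your phrasing (``the relation for $\phi^2$ holds'') sidesteps this, which is fine, but if you expand it out be sure to use the correct sign.
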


In particular, the product $K\times S^1$ of an almost K\"ahler manifold $(K,h,\omega)$ and a circle has a natural K-cosymplectic structure, with the product 
metric $g=h+d\theta^2$. The cosymplectic structure is given by taking $\xi$ to be the vector tangent to $S^1$ and $\eta$ to be the dual 1-form. For a product K-cosymplectic manifold, 
$M=K\times S^1$, the space of leaves of $\cF_\xi$ is simply $K$, hence \eqref{splitting} becomes
\begin{equation}\label{splitting_2}
 H^k_\xi(M)=H^k(K;\bR)\oplus\eta \wedge H^{k-1}(K;\bR).
\end{equation}

Under this splitting, $\alpha\in H^k_\xi(M)$ can be written as $\alpha_0+\eta\wedge\alpha_1$ with $\alpha_0\in H^k(K;\bR)$ and $\alpha_1\in H^{k-1}(K;\bR)$. We have $\imath_\xi\alpha_j=0$, $j=0,1$, 
and $\eta\wedge\eta\wedge\alpha_1=0$. As a consequence, the Lefschetz map \eqref{LKC_C} sends $\alpha_0\in H^k(K;\bR)\subset H^k_\xi(M)$ to $\omega^{n-k}\wedge\eta\wedge\alpha_0$ and 
$\eta\wedge\alpha_1$ to $\omega^{n-k+1}\wedge\alpha_1$.

We easily obtain the following result:

\begin{proposition}
 Let $(K,h,\omega)$ be a compact almost K\"ahler manifold. Then the compact K-cosymplectic manifold $K\times S^1$ satifies the Lefschetz property \eqref{Lefschetz_map_coK} if and only if 
$(K,\omega)$ satifies the Lefschetz property \eqref{Lefschetz_map}.
\end{proposition}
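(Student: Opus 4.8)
The plan is to combine the splitting \eqref{splitting_2} with the explicit description of the Lefschetz map \eqref{LKC_C} recorded just before the statement, and to recognise the resulting map as being assembled from the ordinary symplectic Lefschetz maps of $(K,\omega)$.

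Write $\dim K=2n$, so $\dim M=2n+1$, and for $0\le j\le n$ let $L^{n-j}_K\colon H^j(K;\bR)\to H^{2n-j}(K;\bR)$ denote the symplectic Lefschetz map of $(K,\omega)$, i.e. cup product with $[\omega]^{n-j}$. By Proposition~\ref{harmonic}(1) the cdga $(\Omega^*_\xi(M),d)$ computes $H^*(M;\bR)$, and \eqref{splitting_2} gives $H^k_\xi(M)=H^k(K;\bR)\oplus\eta\wedge H^{k-1}(K;\bR)$. Using the two formulas displayed before the statement: on a class $\alpha_0\in H^k(K;\bR)$ one has $\imath_\xi\alpha_0=0$, so \eqref{LKC_C} sends $\alpha_0$ to $\omega^{n-k}\wedge\eta\wedge\alpha_0=\eta\wedge(\omega^{n-k}\wedge\alpha_0)\in\eta\wedge H^{2n-k}(K;\bR)$; on a class $\eta\wedge\alpha_1$ with $\alpha_1\in H^{k-1}(K;\bR)$ the term involving $\eta\wedge\eta$ vanishes and \eqref{LKC_C} sends it to $\omega^{n-k+1}\wedge\alpha_1\in H^{2n-k+1}(K;\bR)$. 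Thus, with respect to the splitting of domain and target, \eqref{LKC_C} in degree $k$ is block anti-diagonal.

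Next I would identify the two off-diagonal blocks. The block $H^k(K;\bR)\to\eta\wedge H^{2n-k}(K;\bR)$ is $L^{n-k}_K$ followed by the canonical isomorphism $\beta\mapsto\eta\wedge\beta$, while the block $\eta\wedge H^{k-1}(K;\bR)\to H^{2n-k+1}(K;\bR)$ is $L^{n-k+1}_K$ precomposed with the canonical isomorphism $\eta\wedge\alpha_1\mapsto\alpha_1$. Hence \eqref{LKC_C} in degree $k$ is an isomorphism if and only if both $L^{n-k}_K$ and $L^{n-k+1}_K$ are isomorphisms; for $k=0$ the second summand is zero and only $L^n_K$ (always an isomorphism, both sides being one-dimensional) is involved.

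Finally I would do the index bookkeeping. As $k$ runs over $0,\dots,n$, the family of first blocks $\{L^{n-k}_K\colon H^k(K)\to H^{2n-k}(K)\}$ is exactly $\{L^{n-j}_K:0\le j\le n\}$, all the symplectic Lefschetz maps of $K$, while the second blocks contribute $\{L^{n-j}_K:0\le j\le n-1\}$, a sub-collection of the first. Therefore \eqref{LKC_C} is an isomorphism for every $0\le k\le n$ if and only if $L^{n-j}_K$ is an isomorphism for every $0\le j\le n$, which is precisely the Lefschetz property \eqref{Lefschetz_map} for $(K,\omega)$. I do not expect a genuine obstacle here; the only points needing care are the vanishing of the diagonal blocks (which comes from $\imath_\xi\alpha_0=0$ and $\eta\wedge\eta=0$) and the degree/index bookkeeping in the last step.
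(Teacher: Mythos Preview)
Your proposal is correct and follows exactly the line the paper sets up: the paper states the proposition without proof, immediately after recording the splitting \eqref{splitting_2} and the action of \eqref{LKC_C} on the two summands, precisely so that the block anti-diagonal description you write down becomes evident. Your index bookkeeping and the observation that the second family of blocks is contained in the first are the only missing details, and they are handled correctly.
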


We focus on a situation in which one could think of studying some analogue of the Lefschetz property on arbitrary cosymplectic manifolds. We make the following definition:

\begin{definition}\label{alg_model}
Let $M$ be a compact connected manifold. We say that the cohomology of $M$ \emph{can be computed from an algebraic model} if there exist a commutative differential graded algebra (cdga) 
$(\bigwedge V_M,d)$, defined over $\bR$, where the elements of $V_M$ have degree $1$, 
and a morphism of cdga's $\varphi\colon(\bigwedge V_M,d)\to (\Omega^*(M),d)$, inducing and isomorphism in cohomology, 
i.e.\ $H^*(\bigwedge V_M,d)\cong H^*(M;\bR)$.
\end{definition}

The usual terminology for a morphism of cdga's, inducing an isomorphism on cohomology, is quasi-isomorphism. It follows from this definition that if $M$ is an $n$-dimensional manifold, then 
$\dim(V_M)=n$, because a generator of the top power of $V_M$ must be sent to the volume form on $M$. Suppose $\dim V_M=n$; then $\bigwedge\nolimits^n V_M$ is one-dimensional, hence a generator 
$\mathrm{vol}\in 
\bigwedge\nolimits^n V_M$ is sent to the volume form on $M$ by $\varphi$. 
By compactness, and using the fact that 
$\varphi$ commutes with the differential, this implies that $\mathrm{vol}$ can not be exact and, as a consequence, that every element $w$ in $\bigwedge\nolimits^{n-1}V_M$ must be closed; otherwise one 
would 
necessarily have $dw=c\mathrm{vol}$, with $c\in\bR^*$, making $\mathrm{vol}$ exact. 

\begin{proposition}\label{Nomizu_Hattori}
 Let $S=\Gamma\backslash G$ be a compact solvmanifold. Then the cohomology of $M$ can be computed from an algebraic model.
\end{proposition}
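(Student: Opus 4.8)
The plan is to reduce the statement to the completely solvable case, where Theorem~\ref{Nomizu_Hattori_1} applies directly, and to handle the general case by replacing $G$ with a better-behaved solvable Lie group having the same lattice. If $G$ is completely solvable (in particular, if it is nilpotent), then by Theorem~\ref{Nomizu_Hattori_1} the inclusion of left-invariant forms $(\bigwedge\fg^*,d)\hookrightarrow(\Omega^*(S),d)$ is a quasi-isomorphism; since $\bigwedge\fg^*$ is the free commutative algebra on the degree-one vector space $\fg^*$ and $\dim\fg^*=\dim\fg=\dim S$, this inclusion is already an algebraic model for $S$ in the sense of Definition~\ref{alg_model}.

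For a general simply connected solvable $G$ with lattice $\Gamma$, the inclusion of left-invariant forms need no longer be a quasi-isomorphism, so a detour is required. First I would invoke Mostow's structure theorem: writing $\mathfrak{n}$ for the nilradical of $\fg$ and $N$ for the corresponding closed subgroup of $G$, one has that $\Gamma_N=\Gamma\cap N$ is a lattice in $N$ and $\Gamma N$ is closed in $G$, so that $S$ is the total space of a fibration over a torus $T^b$ with fiber the nilmanifold $\Gamma_N\backslash N$. Then I would use the modification result going back to Mostow (conveniently summarized, in the generality needed here, in Bock~\cite{Bock}): there exists a simply connected solvable Lie group $\hat G$ which again admits $\Gamma$ as a lattice, with $\Gamma\backslash\hat G$ diffeomorphic to $S$, and for which the \emph{Mostow condition} holds. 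For such a pair the left-invariant forms do compute the cohomology, i.e.\ the inclusion $(\bigwedge\hat\fg^*,d)\hookrightarrow(\Omega^*(\Gamma\backslash\hat G),d)$ is a quasi-isomorphism. Composing it with the pullback along the diffeomorphism $S\to\Gamma\backslash\hat G$ yields a morphism of cdga's $(\bigwedge\hat\fg^*,d)\to(\Omega^*(S),d)$ inducing an isomorphism in cohomology; since $\bigwedge\hat\fg^*$ is generated in degree one and $\dim\hat\fg^*=\dim\hat G=\dim S$, this is the desired algebraic model.

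I expect the only genuinely non-elementary ingredient to be this modification theorem, which guarantees that every solvmanifold is diffeomorphic to one satisfying the Mostow condition; once it is granted, what remains is bookkeeping and a dimension count (the latter being anyway forced by the definition, as already remarked after Definition~\ref{alg_model}). A more hands-on alternative would be to build the model explicitly out of the Mostow fibration, taking $V_S=(\bR^b)^*\oplus\mathfrak{n}^*$ with differential given by the Chevalley-Eilenberg differential of $\mathfrak{n}$ plus a term encoding the monodromy $\Gamma/\Gamma_N\to\mathrm{Aut}(\mathfrak{n})$ on the base torus; the difficulty there is that this monodromy need not be unipotent, so the behaviour of a nilpotent fibration cannot simply be quoted, and one has to realize the monodromy automorphisms as exponentials of derivations — which is, in essence, precisely what the modification procedure accomplishes globally.
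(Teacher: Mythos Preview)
Your approach is correct and essentially identical to the paper's: Nomizu--Hattori (Theorem~\ref{Nomizu_Hattori_1}) for the completely solvable case, and in the general case a modification of $G$ to another simply connected solvable group with the same lattice for which left-invariant forms compute the de Rham cohomology. The only discrepancy is bibliographic: the paper attributes the modification step to Guan~\cite{Guan2} (see also Console--Fino~\cite{CF}) rather than to Mostow or Bock~\cite{Bock}, so you may want to adjust your reference.
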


\begin{proof}
 This follows from Nomizu Theorem \cite{Nomizu} when $S$ is a nilmanifold and from Hattori Theorem \cite{Hattori} when $S$ is a completely solvable solvmanifold. In both cases, the algebraic model 
is the Chevalley-Eilenberg complex $(\bigwedge\fg^*,d)$ of the Lie algebra $\fg$ of $G$. For the non-completely solvable case, one uses a result of Guan (\cite{Guan2}, see also \cite{CF}) to obtain a 
modification of the Lie group $G$, $\tilde{G}$, such that $S=\Gamma\backslash \tilde{G}$ and the Lie algebra cohomology of $\tilde{\fg}$ computes $H^*(S;\bR)$.
\end{proof}

%
%
%
%

%
Let $(M^{2n+1},\eta,\omega)$ be a cosymplectic manifold satisfying the hypotheses of Definition \ref{alg_model}. Let $(\bigwedge V_M,d)$ be the algebraic model and let $\varphi\colon(\bigwedge 
V_M,d)\to(\Omega^*(M),d)$. Since $\varphi$ induces an isomorphism in cohomology, and both $\eta$ and $\omega$ are closed non-exact forms, one can choose cocycles $v\in V_M$ and $w\in 
\bigwedge\nolimits^2 V_M$ 
such that 
$[\varphi(v)]=[\eta]$ and $[\varphi(w)]=[\omega]$. Notice that $v\wedge w^n$ is a generator of 
$\bigwedge\nolimits^{2n+1}V_M$. 
Since $M$ is connected, $v$ is uniquely determined. If $\bar{w}\in \bigwedge\nolimits^2 V_M$ is such 
that $[\bar{w}]=[w]$ in $H^2(\bigwedge V_M)$, then there exists $u\in V_M$ such that $\bar{w}=w+du$. 
Then $v\wedge \bar{w}^n=v\wedge w^n$, since they differ by an exact form and
there are no exact forms in $\bigwedge\nolimits^{2n+1}V_M$. 
The (algebraic) Reeb field $\theta\in V_M^*$ is determined by the 
equation
\[
 \imath_\theta ( v\wedge w^n) =w^n.
\]

Take a closed form $\gamma\in\Omega^1(M)$. We find a cocycle $z\in V_M$ such that 
$[\varphi(z)]=[\gamma]$. Define an algebraic Lefschetz map $V_M\to\bigwedge\nolimits^{2n}V_M$ by
\begin{equation}\label{alg_Lef_map}
 z\mapsto y\coloneq w^{n-1}\wedge v \wedge z+w^n\wedge\imath_\theta z.
\end{equation}
By what we said so far, $y$ is a closed element in $\bigwedge\nolimits^{2n}V_M$. Hence the algebraic Lefschetz 
map sends $1$-cocycles to $2n$-cocycles. Therefore, it descends to cohomology and it is reasonable to ask whether it is an isomorphism there.

\begin{definition}\label{1-Lefschetz}
 Let $(M,\eta,\omega)$ be a compact cosymplectic manifold and assume that the cohomology of $M$ can be computed from an algebraic model. We say 
that $M$ is \emph{$1$-Lefschetz} if \eqref{alg_Lef_map} is an isomorphism.
\end{definition}

In the case of nilmanifolds, we have the following result, which can be seen as the cosymplectic analogue of the Benson-Gordon Theorem \ref{Benson_Gordon}.

\begin{theorem}
 Let $N=\Gamma\backslash\Lie{G}$ be a compact nilmanifold of dimension $2n+1$ endowed with a cosymplectic structure $(\eta,\omega)$. Assume that $N$ is $1$-Lefschetz. Then $N$ is 
diffeomorphic to a torus.
\end{theorem}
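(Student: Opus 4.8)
The plan is to descend to the Lie algebra and reproduce the Benson--Gordon argument (Theorem~\ref{Benson_Gordon}) in cosymplectic form. Write $\fg$ for the Lie algebra of $\Lie{G}$. By Nomizu's theorem~\ref{Nomizu_Hattori_1} the Chevalley--Eilenberg complex $(\bigwedge\fg^*,d)$ computes $H^*(N;\bR)$ and is the algebraic model of Definition~\ref{alg_model}, so I would first replace $\eta$ and $\omega$ by left-invariant closed representatives of their cohomology classes; these still satisfy $\eta\wedge\omega^n\neq0$ because $[\eta]\cup[\omega]^n\neq0$. I would record two preliminary facts: $d\eta=0$ forces $[\fg,\fg]\subseteq\ker\eta$; and, solving the defining equation $\imath_\theta(\eta\wedge\omega^n)=\omega^n$, the algebraic Reeb field $\theta$ turns out to be the unique element of $\ker\omega$ with $\eta(\theta)=1$ (here $\ker\omega=\bR\theta$ is one-dimensional since $\omega^n\neq0$ and $\dim\fg=2n+1$). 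In particular $\theta\notin\ker\eta$. (The case $n=0$ is trivial, so assume $n\geq1$.)

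Then I would argue by contradiction and assume $\fg$ is not abelian, so $[\fg,\fg]\neq0$. As $\fg$ is nilpotent, the nonzero ideal $[\fg,\fg]$ meets the centre of $\fg$ nontrivially, so I can fix $0\neq X\in Z(\fg)\cap[\fg,\fg]$ and look at the $1$-form $\imath_X\omega$. Since $X$ is central, $\cL_X$ annihilates left-invariant forms, so $d(\imath_X\omega)=\cL_X\omega-\imath_X d\omega=0$; and $\imath_X\omega\neq0$ because $X\in[\fg,\fg]\subseteq\ker\eta$ whereas $\ker\omega=\bR\theta$ with $\eta(\theta)=1$, so $X\notin\ker\omega$. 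Because $B^1(\fg)=0$, the class $[\imath_X\omega]\in H^1(N)$ is nonzero.

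The heart of the argument is then to compute the algebraic Lefschetz map \eqref{alg_Lef_map} on $z=\imath_X\omega$ and show it vanishes. Since $\imath_\theta\imath_X\omega=\omega(X,\theta)=0$, the second summand disappears; using $\imath_X(\omega^n)=n\,(\imath_X\omega)\wedge\omega^{n-1}$ I would get $L[\imath_X\omega]=\tfrac1n[\eta\wedge\imath_X(\omega^n)]$, and then, writing $\nu=\eta\wedge\omega^n$ for the volume form and expanding $\imath_X\nu=\eta(X)\,\omega^n-\eta\wedge\imath_X(\omega^n)=-\eta\wedge\imath_X(\omega^n)$, conclude $L[\imath_X\omega]=-\tfrac1n[\imath_X\nu]$. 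Finally I would show $[\imath_X\nu]=0$ in $H^{2n}(N)$: the form $\imath_X\nu$ is closed ($d(\imath_X\nu)=\cL_X\nu=0$), and by Poincar\'e duality on the compact oriented manifold $N$ it is enough to check that it pairs trivially with every closed $1$-form, which by Nomizu one may take left-invariant, $\gamma\in\fg^*$; since $\gamma\wedge\nu=0$ for degree reasons, $\int_N\gamma\wedge\imath_X\nu=\gamma(X)\int_N\nu$, and this vanishes because a closed $1$-form in $\fg^*$ annihilates $[\fg,\fg]\ni X$. Hence $L[\imath_X\omega]=0$ with $[\imath_X\omega]\neq0$, contradicting the $1$-Lefschetz hypothesis. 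Therefore $\fg$ is abelian, $\Lie{G}\cong\bR^{2n+1}$, and $N=\Gamma\backslash\Lie{G}$ is diffeomorphic to the torus $T^{2n+1}$.

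I expect the main obstacle to be the structural bookkeeping that makes everything close up: identifying the algebraic Reeb field with the generator of $\ker\omega$ (so that the $\imath_\theta z$ term of \eqref{alg_Lef_map} drops on the test class) and arranging that $X$ is simultaneously central and in $[\fg,\fg]$ (so that $\imath_X\omega$ is a cocycle and $\imath_X\nu$ is a coboundary). The Poincar\'e-duality computation showing $[\imath_X\nu]=0$ precisely when $X\in[\fg,\fg]$ is the technical core, just as in the symplectic Benson--Gordon theorem.
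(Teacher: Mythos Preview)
Your argument is correct. Both your proof and the paper's hinge on the same two facts---that in a non-abelian nilpotent Lie algebra the centre meets $[\fg,\fg]$ nontrivially, and that closed $1$-forms annihilate $[\fg,\fg]$---but they are organised quite differently.

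The paper fixes a Mal'\v cev basis $x_0=\eta,x_1,\ldots,x_{2n}$ of $\fg^*$, writes $\omega=\sum a_{ij}x_{ij}+z\wedge x_{2n}$, notes that the last generator $x_{2n}$ never occurs in any $dx_k$ (so its dual vector is central), and defines the derivation $\lambda=\imath_{e_{2n}}$. Applying $\lambda$ to the trivial identity $y\wedge\eta\wedge\omega^n=0$ yields $y\wedge\eta\wedge z\wedge\omega^{n-1}=0$ for every $1$-cocycle $y$; since the $1$-Lefschetz hypothesis forces $[\eta\wedge z\wedge\omega^{n-1}]\neq 0$ in $H^{2n}$, this contradicts Poincar\'e duality. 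Your route is dual to this: rather than producing a nonzero class in $H^{2n}$ that pairs trivially with all of $H^1$, you produce a nonzero class $[\imath_X\omega]\in H^1$ and show directly that its image under the Lefschetz map is $-\tfrac{1}{n}[\imath_X\nu]=0$. Your argument is basis-free and hews more closely to the original Benson--Gordon proof; it also makes transparent the role of the algebraic Reeb field (your identification of $\theta$ as the unique vector in $\ker\omega$ with $\eta(\theta)=1$ is what kills the $\omega^n\imath_\theta z$ term cleanly). The paper's derivation trick, on the other hand, avoids having to identify $\theta$ at all, at the cost of choosing coordinates. Either way, the Poincar\'e-duality step carries the same content: a closed $1$-form vanishes on $[\fg,\fg]$, which is exactly why the pairing collapses.
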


\begin{proof}
The Chevalley-Eilenberg complex $(\bigwedge\fg^*,d)$ of the Lie algebra $\fg$ is an algebraic model of the cohomology of $N$, as follows from Proposition \ref{Nomizu_Hattori}; notice that 
$\dim\fg=2n+1$. Let $\varphi\colon(\bigwedge\fg^*,d)\to(\Omega^*(N),d)$ be the quasi-isomorphism. In particular, the cohomology of $N$ can be computed from this algebraic model. In order to prove 
that 
$N$ is diffeomorphic to a torus, it is enough to prove that $d\equiv 0$ in $(\bigwedge\fg^*,d)$ (see the argument in \cite[Theorem 2.2, Chapter 2]{Oprea_Tralle}). Arguing as above, we can assume that 
there are cocycles $v\in\fg^*$ and 
$w\in\bigwedge\nolimits^2\fg^*$ mapping to $\eta$ and $\omega$ respectively 
under $\varphi$. Assume that $d$ is non-zero. Then we can choose a Mal'cev basis of $\fg^*$,
\[
 \fg^*=\la v,x_1,\ldots,x_s,x_{s+1},\ldots,x_{2n}\ra,
\]
with $v=x_0$, $dx_i=0$, $0\leq i\leq s$, and $dx_j$, $s+1\leq j\leq 2n$, 
is a non-zero linear combination of products $x_{k\ell}:=x_k\wedge x_\ell$ with $k,\ell<j$, 
for some $s<2n+1$. Then $w$ can be written as
\begin{equation}\label{sympl_form}
 w=\sum_{0\leq i<j\leq 2n}a_{ij}x_{ij}+z\cdot x_{2n},
\end{equation}
for some coefficients $a_{ij}\in\bR$; we have collected all the summands that contain $x_{2n}$
into $z\in \fg^*$, where $z$ does not contain $x_{2n}$. 
Also, notice that $z$ must be a cocycle. Indeed, when one computes $dw$, which must be zero, the term $dz\cdot x_{2n}$ pops up. But since we have chosen a Mal'cev basis for 
$\fg^*$, the generator $x_{2n}$ can not appear in the differential of any other $x_k$, and this forces $z$ to be a cocycle. We define a derivation $\lambda$ of degree $-1$ on $\fg^*$ by the rule
\[
 \lambda(x_i)=0, \ 0\leq i\leq 2n, \quad \lambda(x_{2n})=1,
\]
and extend it to $(\bigwedge\fg^*,d)$ by forcing the Leibnitz rule. Assume that the algebraic Lefschetz map \eqref{alg_Lef_map} is an isomorphism; the cocycle $z$ is sent to the cocycle 
$w^{n-1}\wedge 
v\wedge z$. For degree reasons we have $y\wedge v\wedge w^n=0$ for every $1$-cocycle. 
Applying $\lambda$ we get
\begin{align*}
 0& = \lambda(y\wedge v\wedge w^n)=\lambda(y)\wedge \eta\wedge w^n-y\wedge\lambda(v)\wedge w^n
  +y\wedge v\wedge \lambda(\omega^n)=\\
  & = n\,y\wedge v\wedge\lambda(w)\wedge w^{n-1}=n\,y\wedge v\wedge z\wedge w^{n-1}\, .
\end{align*}
This must be true for every $1$-cocycle $y$, and $v\wedge z\wedge w^{n-1}$ is non-zero by the Lefschetz-type hypothesis. But this violates Poincar\'e duality. Thus we obtain a contradiction with the 
existence of a non-closed generator of $\fg^*$, since $\lambda$ was defined as 0 on such generators. Hence $d\equiv 0$.
\end{proof}

\begin{remark}
 Recall that a manifold $M^{2n}$ is \emph{cohomologically symplectic} is there is a class $\omega\in H^2(M;\bR)$ such that $\omega^n\neq 0$. Every symplectic manifold is cohomologically symplectic, 
but the converse is not true. Indeed, $\bC P^2\# \bC P^2$ is cohomologically symplectic but admits no almost complex structures, as shown by Audin \cite{Audin}. Hence, it can not be symplectic. The 
nice interplay between geometry and topology on cohomologically symplectic manifolds has been unveiled by Lupton and Oprea (see \cite{Lupton_Oprea_Gottlieb}). It was proven recently by Kasuya (see 
\cite{Kasuya_2}) that cohomologically symplectic \emph{solvmanifolds} are genuinely symplectic. 
Following these ideas, one can call a manifold $M^{2n+1}$ 
\emph{cohomologically cosymplectic} if there exist classes $\eta\in H^1(M;\bR)$ and $\omega\in H^2(M;\bR)$ such that $\eta\wedge\omega^n\neq 0$. By arguing as in \cite{Kasuya_2}, one can show that a 
cohomologically cosymplectic solvmanifold is cosymplectic.
\end{remark}

Example \ref{non_2_Lefschetz} above shows that, even in the case of cosymplectic manifolds whose cohomology can be computed from an algebraic model, the Lefschetz map does not necessarily send 
cocycles of degree $k\geq 2$ to cocycles.

\subsection{Formality in cosymplectic geometry}\label{F_C}

As a consequence of formality of compact K\"ahler manifolds, it was proved in \cite{CdLM} that a compact coK\"ahler manifold is formal. In \cite{BFM}, we constructed examples of 
non-formal cosymplectic manifolds with arbitrary Betti numbers. More precisely, we proved the following result:

\begin{theorem}\label{geography}
 For every pair $(m=2n+1,b)$ with $n,b\geq 1$ there exists a compact non-formal manifold $M$, endowed with a cosymplectic structure, of dimension $m$ and with first Betti number equal to $b$, with 
the exception of the pair $(3,1)$.
\end{theorem}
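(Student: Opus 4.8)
The plan is to construct explicit examples, realising the pairs $(m=2n+1,b)$ as products $K\times T^b$ or, more generally, as mapping tori / iterated bundles over tori, where $K$ is a non-formal cosymplectic (indeed symplectic $\times S^1$) building block of the appropriate dimension. The key observation is that formality is a rational-homotopy property that interacts well with products: if $X$ is non-formal and $Y$ is any connected space, then $X\times Y$ is non-formal (a non-trivial Massey product on $X$ pulls back to one on $X\times Y$, using the Künneth splitting of the relevant cohomology groups and the quotient defining Massey products; alternatively one uses that the minimal model of $X\times Y$ is the tensor product of the minimal models). So it suffices to produce, for each odd dimension $m=2n+1\geq 3$ with $m\neq 3$ in the $b=1$ case, one non-formal cosymplectic manifold of that dimension with $b_1$ as small as allowed, and then take products with tori to raise $b_1$ to any prescribed value while preserving both non-formality and the cosymplectic structure (a product of a cosymplectic manifold with a torus is again cosymplectic, taking $\eta$ from the first factor and $\omega$ from the first factor).

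First I would dispose of the low-dimensional anchor cases. For $b\geq 2$ and $m=3$: the Heisenberg nilmanifold $N^3$ of Example \ref{Heisenberg} is non-formal (it is a non-toral nilmanifold, so Hasegawa's Theorem \ref{formal_nilmanifolds} applies) and carries a cosymplectic structure with $b_1(N^3)=2$; for $b\geq 3$ take $N^3\times T^{b-2}$. The excluded pair $(3,1)$ is genuinely excluded and needs no construction — indeed a $3$-manifold with $b_1=1$ that is cosymplectic is forced to be "small", and in any case the statement only claims existence for the non-excluded pairs. For $m=5,7$ and general odd $m$, I would take a non-toral nilmanifold $N^m$ of dimension $m$ admitting a cosymplectic structure with the minimal possible $b_1$: for instance the nilmanifold of Example \ref{non_2_Lefschetz} is a $5$-dimensional cosymplectic nilmanifold, non-formal by Hasegawa's theorem, with $b_1=3$ (one checks $de^3,de^4\neq 0$, so $b_1=3$); products with tori then cover all $(5,b)$ with $b\geq 3$. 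To reach smaller $b_1$ in dimension $5$ and $7$ — in particular $b_1=1$ and $b_1=2$ — nilmanifolds will not suffice (a nilmanifold always has $b_1\geq 2$), so here I would instead use a mapping torus construction: start from a non-formal compact symplectic manifold $(P^{2n},\omega_P)$ equipped with a symplectomorphism $\varphi$ whose induced map on $H^1(P)$ has a prescribed number of fixed directions, and form $P_\varphi$; this is cosymplectic by the Wang sequence argument, its $b_1$ equals $1+\dim\ker(\varphi^*-\mathrm{Id}|_{H^1})$, and non-formality of $P$ forces non-formality of $P_\varphi$ (a Massey product on $P$, pulled back along a section of $H^*(P_\varphi)\to H^*(P)$ coming from the bundle projection, survives). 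Concretely one can take $P$ to be a non-formal symplectic nilmanifold in dimension $4$ or $6$ and $\varphi$ induced by an automorphism of the lattice. Then $P_\varphi\times T^{b-1-(\dots)}$ (or rather, adjusting the torus factor) realises $(2n+1,b)$ for the remaining small values of $b$.

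The main obstacle is the careful bookkeeping at the boundary of the allowed region: getting $b_1$ exactly equal to $1$ (for $m\geq 5$) and exactly $2$, while simultaneously keeping the manifold cosymplectic and non-formal, since these are precisely the cases not accessible by taking products of a fixed nilmanifold with a torus. This is where one must exhibit either a suitable symplectomorphism of a non-formal symplectic manifold with very few fixed cohomology classes in degree $1$ (ideally $\varphi^*-\mathrm{Id}$ invertible on $H^1(P)$, giving $b_1(P_\varphi)=1$), or an explicit solvmanifold whose Chevalley–Eilenberg model one can check directly — using Nomizu's/Hattori's Theorem \ref{Nomizu_Hattori_1} to reduce formality and the cosymplectic condition to a finite linear-algebra computation on $\bigwedge\fg^*$, and exhibiting a non-vanishing triple Massey product there. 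Once the extreme cases $b=1$ (for $m\neq 3$) and $b=2$ are secured, everything else follows by crossing with tori, and one checks in each case that $\eta\wedge\omega^n\neq 0$ is preserved and that the rational homotopy type remains non-formal by the product/fibration argument above.
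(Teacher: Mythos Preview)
Your overall plan---build anchor examples and then propagate---is in the right spirit, but the propagation step is wrong as written, and this breaks the argument. Taking a product with a torus raises the dimension, not just $b_1$: if $M^{2n+1}$ is cosymplectic with $b_1(M)=b_0$, then $M\times T^k$ has dimension $2n+1+k$ and $b_1=b_0+k$. So the sentence ``for each odd dimension $m$ \ldots\ produce one non-formal cosymplectic manifold of that dimension with $b_1$ as small as allowed, and then take products with tori to raise $b_1$'' does not yield manifolds of dimension $m$. Concretely, your proposed example $N^3\times T^{b-2}$ for the pair $(3,b)$ is a $(b+1)$-manifold, not a $3$-manifold; likewise ``products with tori then cover all $(5,b)$ with $b\geq 3$'' is false. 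A related slip: a product of a cosymplectic manifold with a torus is \emph{not} cosymplectic if you take ``$\eta$ from the first factor and $\omega$ from the first factor''---that $\omega$ is degenerate along the torus directions, so $\eta\wedge\omega^n$ vanishes as a top form on the product. You must enlarge $\omega$ by a symplectic form on the (even-dimensional) torus factor, and then you are moving along the diagonal $(m,b)\mapsto(m+2,b+2)$, which still does not let you vary $b$ with $m$ fixed.

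Because of this, the genuinely delicate cases are not just $b=1,2$ but \emph{all} of dimension $3$ (for every $b\geq 2$) and, for each higher dimension, a range of small $b$ that your product argument never reaches. The paper does not prove Theorem~\ref{geography} here; it is quoted from \cite{BFM}, where the authors build explicit nil- and solvmanifolds case by case. The present paper exhibits one of the hard anchors explicitly (Example~\ref{non_product}): a completely solvable $5$-dimensional solvmanifold $S$ with $b_1(S)=1$ that is cosymplectic and non-formal, verified by writing down the Chevalley--Eilenberg model via Hattori's theorem and exhibiting a Massey product. That is the template: for the extreme pairs one produces a concrete Lie algebra, checks $\eta\wedge\omega^n\neq 0$ and non-formality by hand, and then combines such anchors with products by simply connected symplectic manifolds (e.g.\ $\bC P^1$) to increase the dimension while holding $b_1$ fixed, together with products by $T^2$ to move diagonally. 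Your mapping-torus idea for reaching $b_1=1$ is reasonable in outline, but the assertion that ``non-formality of $P$ forces non-formality of $P_\varphi$'' via a pulled-back Massey product needs justification: the fibre inclusion gives a map $H^*(P_\varphi)\to H^*(P)$, not a section, and Massey products need not lift along it without an invariance hypothesis on the classes involved; in \cite{BFM} this is handled by working inside an explicit finite-dimensional model rather than by a general transfer principle.
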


It is known that an orientable compact manifold of dimension $\leq 4$ with first Betti number equal to 1 is formal, see \cite{FM3}; in other words, the exception $(3,1)$ is not due to the fact that 
we require the manifold to have a cosymplectic structure.

\subsection{Examples}

We let now Sections \ref{LP_C} and \ref{F_C} come together, and fill up two tables similar to those given in Section \ref{FLP_S} in the symplectic case. We have already observed that a compact 
manifold endowed with a cosymplectic structure always has $b_1\geq 1$. Therefore, one can never have simply connected compact cosymplectic manifolds. We will therefore distinguish two cases: 
\begin{itemize}
 \item compact K-cosymplectic manifolds with arbitrary first Betti number;
 \item compact K-cosymplectic manifolds with first Betti number equal to $1$.
\end{itemize}

%
%

Since the product of a compact symplectic manifold and a circle always admits a K-cosymplectic structure, it is enough to take products of the manifolds which appear in Tables \ref{table:1} 
and \ref{table:2} with a circle. The resulting manifolds are collected in Tables \ref{table:3} and \ref{table:4}. For a description of the examples, we refer to the discussion in Sections 
\ref{FLP_S_NSC} and \ref{FLP_S_SC}

\begin{table}[h]
\caption{Product K-cosymplectic manifolds with arbitrary $b_1$}
\begin{center}
{\tabulinesep=1.2mm
\begin{tabu}{cccc}
\hline\hline
 Formality & Lefschetz & $b_1$ odd & \\
  & property & &\\
\hline\hline
yes & yes & yes & $T^{2n+1}$, $M^4\times S^1$\\
yes & yes & no & impossible\\
yes & no & yes & $\Gamma\backslash BG\times S^1$, $\Lambda\backslash G_{6.78}\times\Lambda\backslash G_{6.78}\times S^1$\\
yes & no & no & $\Lambda\backslash G_{6.78}\times S^1$\\
no & yes & yes & $C(12)\times T^3$\\
no & yes & no & impossible\\
no & no & yes & $KT\times KT\times S^1$, $E^4\times S^1$\\
no & no & no & $KT\times S^1$\\
\hline
\end{tabu}}
\end{center}
\label{table:3}
\end{table}



\begin{table}[h]
\caption{Product K-cosymplectic manifolds with $b_1=1$}
\begin{center}
{\tabulinesep=1.2mm
\begin{tabu}{cccc}
\hline\hline
 Formality & Lefschetz property & \\
\hline\hline
yes & yes & $\bC P^n\times S^1$\\
yes & no  & $M(6,0,0)\times S^1$, $N\times S^1$\\
no & yes  & $C(12)\times S^1$\\
no & no   & $\widetilde{\bC P}{}^5\times S^1$\\
\hline
\end{tabu}}
\end{center}
\label{table:4}
\end{table}

We end this section with two further examples: a compact cosymplectic non-formal $5$-manifold with $b_1=1$ which is not the product of a $4$-manifold and a circle and a compact K-cosymplectic 
$7$-manifold which is not coK\"ahler and is not the product of a $6$-manifold and a circle.

\begin{example}\label{non_product}
In \cite{BFM}, we constructed a cosymplectic, non-formal $5$-dimensional solvmanifold $S$ with $b_1(S)=1$. $S$ is the quotient of the completely solvable Lie group $H$, whose elements are matrices
\[
A= \begin{pmatrix}
  e^{-x_5} & 0 & 0 & 0 & 0 & x_1\\
  0 & e^{x_5} & 0 & 0 & 0 & x_2\\
  -x_5e^{-x_5} & 0 & e^{-x_5} & 0 & 0 & x_3\\
  0 & -x_5e^{x_5} & 0 & e^{x_5} & 0 & x_4\\
  0 & 0 & 0 & 0 & 1 & x_5 \\
  0 & 0 & 0 & 0 & 0 & 1
 \end{pmatrix}
\]
with $(x_1,\ldots,x_5)\in\bR^5$, by a lattice $\Gamma$. A global system of coordinates on $H$ is given by $\{X_1,\ldots,X_5\}$ with $X_i(A)=x_i$. A basis of left-invariant $1$-forms on $H$ (which we 
identify with $\fh^*$) is given by
\begin{itemize}
 \item $\alpha_1=e^{X_5}dX_1$;
 \item $\alpha_2=e^{-X_5}dX_2$;
 \item $\alpha_3=X_5e^{X_5}dX_1+e^{X_5}dX_3$;
 \item $\alpha_4=X_5e^{-X_5}dX_2+e^{-X_5}dX_4$;
 \item $\alpha_5=dX_5$.
\end{itemize}

A straightforward computation shows that
\begin{itemize}
 \item $d\alpha_1=-\alpha_1\wedge\alpha_5$;
 \item $d\alpha_2=\alpha_2\wedge\alpha_5$;
 \item $d\alpha_3=-\alpha_1\wedge\alpha_5-\alpha_3\wedge\alpha_5$;
 \item $d\alpha_4=-\alpha_2\wedge\alpha_5+\alpha_4\wedge\alpha_5$;
 \item $d\alpha_5=0$.
\end{itemize}

The manifold $S$ has a cosymplectic structure, defined by taking
\[
 \eta=\alpha_5 \quad \mathrm{and} \quad \omega=\alpha_1\wedge\alpha_4+\alpha_2\wedge\alpha_3.
\]

Thus $(S,\eta,\omega)$ is a compact cosymplectic $5$-dimensional non-formal solvmanifold. Since $H$ is completely solvable, we can apply Hattori's theorem and conclude that $H^*(S;\bR)\cong 
H^*(\fh;\bR)$. 
We compute
\begin{itemize}
 \item $H^0(S;\bR)=\langle 1\rangle$;
 \item $H^1(S;\bR)=\langle[\eta]\rangle$;
 \item $H^2(S;\bR)=\langle[\alpha_1\wedge\alpha_2],[\omega]\rangle$;
 \item $H^3(S;\bR)=\langle[\alpha_3\wedge\alpha_4\wedge\eta],[\omega\wedge\eta]\rangle$;
 \item $H^4(S;\bR)=\langle[\omega^2]\rangle$;
 \item $H^5(S;\bR)=\langle[\eta\wedge\omega^2]\rangle$.
\end{itemize}

Notice that $S$ is $1$-Lefschetz, according to Definition \ref{1-Lefschetz}. We prove that $S$ is not the 
product of a $4$-manifold and a circle. Assume this is the case and write $S=P\times S^1$. We use the product structure of $H^*(S;\bR)\cong H^*(P;\bR)\otimes H^*(S^1;\bR)$. The generator $[\eta]$ 
of $H^1(S^1;\bR)$ is a zero divisor, since $[\eta]\wedge[\alpha_1\wedge\alpha_2]=0$ for $[\alpha_1\wedge\alpha_2]\in H^2(P;\bR)$, and this is absurd.
\end{example}

\begin{example}\label{K-cosymplectic}
 Finally, we give an example of a $7$-dimensional K-cosymplectic solvmanifold without coK\"ahler structures, which is not a product of a $6$-dimensional manifold and a circle. This shows 
that, although we considered \emph{product} K-cosymplectic manifolds, there are more. This example is inspired by the discussion in \cite[Chapter 3, Section 3]{Oprea_Tralle}. We start with the 
$6$-dimensional nilpotent Lie algebra $\fh$ with basis $\{e_1,\ldots,e_6\}$ and non-zero brackets
\[
 [e_1,e_2]=-e_4, \quad [e_1,e_3]=-e_5 \quad \mathrm{and} \quad [e_2,e_3]=-e_6.
\]
Notice that $\fh$ is the Lie algebra $\fh_7$ in the notation of \cite{Salamon} and the Lie algebra $L_{6,4}$ in that of \cite{Bazzoni_Munoz}. The Chevalley-Eilenberg complex is $(\bigwedge \fh^*,d)$ 
with non-zero differential given, in terms of the dual basis $\{e^1,\ldots,e^6\}$, by
\[
 de^4=e^{12}, \quad de^5=e^{13} \quad \mathrm{and} \quad de^6=e^{23}.
\]

We endow $\fh$ with the following almost K\"ahler structure:
\begin{itemize}
 \item the scalar product $h$ which makes $\{e_1,\ldots,e_6\}$ orthonormal;
 \item the symplectic form $\omega=-e^{16}+e^{25}+2e^{34}$.
\end{itemize}
Consider the derivation $D\colon\fh\to\fh$ given, with respect to $\{e_1,\ldots,e_6\}$, by the matrix
\[
D= \begin{pmatrix}
  0 & 1 & 0 & 0 & 0 & 0\\
  -1 & 0 & 0 & 0 & 0 & 0\\
  0 & 0 & 0 & 0 & 0 & 0\\
  0 & 0 & 0 & 0 & 0 & 0\\
  0 & 0 & 0 & 0 & 0 & 1 \\
  0 & 0 & 0 & 0 & -1 & 0
 \end{pmatrix}.
\]

Since $D$ is skew-symmetric, it is an infinitesimal isometry of $(\fh,h)$; furthermore, one can check that $D$ is an infinitesimal symplectic derivation of $(\fh,\omega)$, i.e.,\ $D^t\omega+\omega 
D=0$. We denote by $\fg$ the semi-direct product $\fh\oplus_D\bR$, which is a $7$-dimensional solvable (but not completely solvable) Lie algebra with brackets:
\begin{itemize}
 \item $[e_i,e_j]_\fg=[e_i,e_j]_\fh$, for $1\leq i<j\leq 6$;
 \item $[e_7,e_i]_\fg=D(e_i)$ for $1\leq i\leq 6$; here $e_7$ generates the $\bR$-factor.
\end{itemize}
We see that $\fg$ sits in a short exact sequence $0\to\fh\to\fg\to\bR\to 0$ of Lie algebras and that $\fh\subset\fg$ is an ideal. Define $\varphi_t\coloneq\exp(tD)\in\mathrm{Aut}(\fh)$. Then 
$\varphi_t^*\omega=\omega$ and $\varphi_t^*h=h$. Let $H$ be the unique simply connected nilpotent Lie group with Lie algebra $\fh$ and consider the following diagram:
\[
 \xymatrix{
 H\ar[r]^{\Phi_t} & H\\
 \fh\ar[u]^{\exp}\ar[r]_{\varphi_t}& \fh\ar[u]_{\exp}
 }
\]
where $\Phi_t\coloneq \exp\circ\varphi_t\circ\exp^{-1}$ is a well-defined Lie group automorphism; indeed, $H$ is nilpotent, hence $\exp$ is a global diffeomorphism.
We consider $h$ and $\omega$ as left-invariant objects on $H$. Then $\Phi_t$ is, by construction, an isometry of $(H,h)$ and a symplectomorphism of $(H,\omega)$.

Let $\overline{\Lambda}$ be the lattice in $\fh$ spanned over $\bZ$ by the basis $\{e_1,\ldots,e_6\}$ and set $\Lambda\coloneq\exp(\overline{\Lambda})$. Then $\Lambda\subset H$ is a lattice and 
$N\coloneq\Lambda\backslash H$ is a compact nilmanifold. Notice that $\varphi_{\frac{\pi}{2}}$ preserves $\overline{\Lambda}$, hence $\Phi=\Phi_{\frac{\pi}{2}}$ preserves $\Lambda$ and descends to a 
diffeomorphism of $N$. Since $h$ and $\omega$ are left-invariant, they define a metric and a symplectic structure on $N$. Clearly, $\Phi$ is both an isometry and a symplectomorphism 
of $(N,h,\omega)$. Now consider the solvable Lie group $G=H\rtimes_\Phi\bR$ and the lattice $\Gamma=\Lambda\rtimes_\Phi\bZ$. Notice that $\Gamma$ is a solvable group. We have two short exact sequnces 
of groups, namely
\[
 0\rightarrow\Lambda\rightarrow\Gamma\rightarrow\bZ\rightarrow 0
\]
and
\[
 0\rightarrow H\rightarrow G\rightarrow \bR\rightarrow 0.
\]
Then $S\coloneq \Gamma\backslash G$ is a solvmanifold which can be identified with the mapping torus of the diffeomorphism $\Phi \colon N\to N$. Indeed, the mapping torus bundle 
coincides with the Mostow bundle of $S$, which is $N\to S\to S^1$.

By Proposition \ref{K-cosymplectic_mapping_torus}, $S$ has a natural K-cosymplectic structure. We can describe as a left-invariant K-cosymplectic structure on 
$G$, i.e.,\ a K-cosymplectic structure on $\fg$. Set $\eta=e^7$, $\xi=e_7$, $\omega=-e^{16}+e^{25}+2e^{34}$ and let $g$ be the scalar product which makes $\{e_1,\ldots,e_7\}$ orthonormal. From this 
we 
recover $\phi$. Hence $(\phi,\eta,\xi,g)$ is a K-cosymplectic structure on $S$. We proceed to show
\begin{enumerate}
 \item $b_1(S)=2$, hence $S$ is not coK\"ahler;
 \item $S$ is not the product of a $6$-dimensional manifold and a circle.
\end{enumerate}
Notice that since $G$ is not completely solvable, we can not use Hattori's theorem to compute the cohomology of $S$. We will instead regard $S$ as the mapping torus $N_{\Phi}$ and use 
the following result (see \cite[Lemma 12]{BFM} for a proof):
\begin{lemma}\label{cohomology_mapping_torus}
Let $M$ be a smooth, $n$-dimensional manifold, let $\varphi\colon M\to M$ be a diffeomorphism and let $M_\varphi$ be the corresponding mapping torus. For every $0\leq 
k\leq n$, set $V^k=\ker(\varphi^*-\mathrm{Id}\colon H^k(M;\bR)\to H^k(M;\bR))$ and $C^k=\mathrm{coker}(\varphi^*-\mathrm{Id}\colon H^k(M;\bR)\to H^k(M;\bR))$, where $\varphi^*$ is the map induced 
by $\varphi$ on $H^k(M;\bR)$. Then
\begin{equation}\label{cohom_map_torus}
 H^k(M_\varphi;\bR)\cong V^k \oplus[\eta]\wedge C^{k-1},
\end{equation}
where $\eta$ is the pullback of the generator of $H^1(S^1;\bR)$ under the mapping torus projection $M_\varphi\to S^1$.
\end{lemma}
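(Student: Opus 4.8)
The plan is to recognise $M_\varphi$ as the total space of a fibre bundle $M\hookrightarrow M_\varphi\xrightarrow{\ \pi\ }S^1$ whose monodromy is $\varphi$, and to read off the cohomology from the associated Wang exact sequence. Over $\bR$ every cohomology group is a vector space and the relevant short exact sequences split, which is exactly what produces the direct sum decomposition; the remaining work is to identify the two summands geometrically.

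First I would set up the Wang sequence. It can be obtained by a Mayer--Vietoris argument: cover $S^1$ by two arcs over which $\pi$ is trivial, so that the overlap is two copies of $M$ on which the two trivialisations differ by $\mathrm{Id}$ and by $\varphi$ respectively; a change of basis turns the Mayer--Vietoris connecting map into $\varphi^*-\mathrm{Id}$. (Equivalently one uses the Leray--Serre spectral sequence of $\pi$, which degenerates at $E_2$ for dimension reasons since the base is $S^1$, with $E_2^{0,q}=\ker(\varphi^*-\mathrm{Id})=V^q$ and $E_2^{1,q}=\mathrm{coker}(\varphi^*-\mathrm{Id})=C^q$.) Either way one arrives at
\[
\cdots\to H^k(M_\varphi;\bR)\xrightarrow{\ i^*\ }H^k(M;\bR)\xrightarrow{\varphi^*-\mathrm{Id}}H^k(M;\bR)\xrightarrow{\ \delta\ }H^{k+1}(M_\varphi;\bR)\to\cdots,
\]
where $i\colon M\hookrightarrow M_\varphi$ is the inclusion of a fibre, and hence at the short exact sequence
\[
0\longrightarrow C^{k-1}\xrightarrow{\ \delta\ }H^k(M_\varphi;\bR)\xrightarrow{\ i^*\ }V^k\longrightarrow 0 .
\]
Since $\bR$ is a field this splits, giving $H^k(M_\varphi;\bR)\cong V^k\oplus C^{k-1}$ as vector spaces.

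It remains to match the sub-summand $C^{k-1}$ with ``$[\eta]\wedge C^{k-1}$''. For this I would describe $\delta$ on forms, using the de Rham model $\Omega^k(M_\varphi)\cong\{\alpha(t)+dt\wedge\beta(t)\}$ with $\alpha(t)\in\Omega^k(M)$, $\beta(t)\in\Omega^{k-1}(M)$ subject to the gluing $\alpha(1)=\varphi^*\alpha(0)$, $\beta(1)=\varphi^*\beta(0)$. Fix a smooth $\chi\colon[0,1]\to[0,1]$ which is $\equiv 0$ near $0$ and $\equiv 1$ near $1$, so that $\eta_0\coloneq\chi'(t)\,dt$ is a closed $1$-form descending to $M_\varphi$ and representing $[\eta]$. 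For a closed $\tau\in\Omega^{k-1}(M)$ the form $\eta_0\wedge\tau$ is closed, and because $\chi'$ vanishes near the endpoints it descends to $M_\varphi$; its restriction to a fibre is $0$, so $i^*[\eta_0\wedge\tau]=0$. A direct check with the gluing conditions shows that $[\tau]\mapsto[\eta_0\wedge\tau]$ kills coboundaries and the image of $\varphi^*-\mathrm{Id}$, hence descends to a map $C^{k-1}\to H^k(M_\varphi;\bR)$, that it agrees with the connecting homomorphism $\delta$, and therefore that it is injective with image $\ker(i^*)$. This is the promised copy of $[\eta]\wedge C^{k-1}$; naturality of Mayer--Vietoris also gives compatibility of the whole decomposition with $\pi^*$, so that $[\eta]$ itself corresponds to the generator of $C^0\subset H^1(M_\varphi;\bR)$.

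The main obstacle is precisely this last step. The splitting $H^k\cong V^k\oplus C^{k-1}$ over $\bR$ is formal, but presenting the $C^{k-1}$-summand concretely as classes of the form $[\eta_0\wedge\tau]$ forces one to handle the gluing conditions of the mapping torus with care: one must verify well-definedness on $C^{k-1}$ and surjectivity onto $\ker(i^*)$, which is exactly the exactness of the Wang sequence at the middle term. I would also flag that ``$[\eta]\wedge$'' here is meant via this representative construction and is not literally cup product with $[\eta]$: in general $\eta_0\wedge\tau$ is not cohomologous to $[\eta]$ times a class pulled back from $M_\varphi$, but the dimension count $b_k(M_\varphi)=\dim V^k+\dim C^{k-1}=\dim\ker(\varphi^*-\mathrm{Id})_k+\dim\ker(\varphi^*-\mathrm{Id})_{k-1}$ is unaffected, which is all that is needed in the applications.
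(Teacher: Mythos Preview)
The paper does not actually give a proof of this lemma; it simply states it inside Example~\ref{K-cosymplectic} and refers to \cite[Lemma~12]{BFM} for the argument. Your proof via the Wang exact sequence (equivalently, the degenerate Leray--Serre spectral sequence of the fibration $M_\varphi\to S^1$) is the standard one and is correct; this is in fact how the cited reference proceeds. Your care in distinguishing the formal vector-space splitting from the concrete realisation of the $C^{k-1}$ summand by classes $[\eta_0\wedge\tau]$, and your remark that ``$[\eta]\wedge$'' here is a notational device rather than a literal cup product on $M_\varphi$, are both apt and go slightly beyond what the statement strictly requires.
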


According to \cite[Chapter 3, Theorem 3.8]{Oprea_Tralle}, the action of 
$\Phi=\Phi_{\frac{\pi}{2}}$ on $H^k(N;\bR)$ is obtained by taking the action of $\varphi_{\frac{\pi}{2}}$ 
on $\bigwedge^k\fh^*$ and considering the action induced on the cohomology $H^k(\fh;\bR)$. We are implicitly identifying the cohomology of $N$ with the cohomology of $\fh$. We can do this thanks to 
Nomizu's theorem. Applying \eqref{cohom_map_torus} with $k=1$, one computes that
\[
 H^1(S;\bR)=\langle [e^3],[e^7]\rangle.
\]
Hence $b_1(S)=2$; this proves $(1)$ above. Using again \eqref{cohom_map_torus}, one computes the remaining Betti numbers of $S$ to be $b_2(S)=3$ and $b_3(S)=7$. Assume now that $S$ is a product, 
$S=P\times S^1$. Notice that $P$ is an aspherical manifold and that $\pi_1(P)$ is solvable, being a subgroup of $\Gamma=\pi_1(S)$. Also, by applying the K\"unneth formula, we obtain that 
$\chi(P)=-1$. But this contradicts Proposition \ref{product} below. This proves $(2)$.
\end{example}

\begin{proposition}\label{product}
 Let $M$ be a compact aspherical manifold whose fundamental group is solvable. Then $\chi(M)=0$.
\end{proposition}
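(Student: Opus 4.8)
The plan is to deduce $\chi(M)=0$ from the vanishing of the $L^2$-Betti numbers of $M$; this is the point where the amenability, hence the solvability, of $\pi_1(M)$ enters. Write $\Gamma=\pi_1(M)$. The first thing I would check is that $\Gamma$ is infinite. Since $M$ is aspherical it is a $K(\Gamma,1)$; a nontrivial \emph{finite} group has nonzero rational homology in infinitely many degrees, which is incompatible with $M$ being a closed manifold, while $\Gamma$ trivial would make $M$ contractible, hence a point. (We assume $\dim M\ge 1$, as in all our applications; for a point the statement fails, since $\chi=1$.) Being solvable, $\Gamma$ is amenable, so we are looking at a closed aspherical manifold whose fundamental group is infinite and amenable.

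Now I would invoke two classical facts on $L^2$-invariants. First, Atiyah's $L^2$-index theorem: for \emph{any} closed manifold, $\chi(M)=\sum_{i\ge 0}(-1)^i\,b^{(2)}_i(M)$, where $b^{(2)}_i(M)$ is the $i$-th $L^2$-Betti number of the universal cover. Second, since $M$ is aspherical, $b^{(2)}_i(M)$ coincides with the $i$-th $L^2$-Betti number of the group $\Gamma$; and by the theorem of Cheeger and Gromov, a closed aspherical manifold whose fundamental group contains an infinite amenable normal subgroup has all $L^2$-Betti numbers equal to zero -- here one may take the normal subgroup to be $\Gamma$ itself. Feeding this into Atiyah's formula, every term in the alternating sum vanishes and $\chi(M)=0$.

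The only nontrivial inputs are these two imported theorems, both standard and available in any reference on $L^2$-invariants, so I do not expect real difficulty, and the bookkeeping that $\Gamma$ is infinite is immediate. If one wishes to avoid $L^2$-methods, an elementary argument is available whenever $\Gamma$ is polycyclic -- which covers every solvmanifold considered in this paper: a polycyclic group of positive Hirsch length has a finite-index subgroup $\Gamma'$ surjecting onto $\bZ$ with finitely generated kernel, so the corresponding finite cover $M'\to M$ is homotopy equivalent to the total space of a fibration over $S^1$ whose fibre has the homotopy type of a finite complex; hence $\chi(M')=\chi(\text{fibre})\cdot\chi(S^1)=0$ and $\chi(M)=\chi(M')/[\Gamma:\Gamma']=0$. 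The reason I would still prefer the $L^2$-argument is precisely that for a general solvable $\Gamma$ such a finitely generated kernel need not exist, and it is then unclear a priori that the intervening classifying spaces have finite-dimensional cohomology -- the obstruction the $L^2$-argument sidesteps entirely.
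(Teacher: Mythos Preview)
Your $L^2$-argument is correct and gives a clean proof, but it is genuinely different from the paper's. One small slip: a nontrivial finite group has \emph{trivial} rational homology in positive degrees; the obstruction to a finite-dimensional $K(\Gamma,1)$ comes from integral (or mod~$p$) group cohomology, or more simply from the observation that the universal cover $\widetilde{M}$ would be a closed contractible manifold of positive dimension, which is impossible. This does not affect the validity of the argument, only the justification of that preliminary step.

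The paper proceeds entirely without $L^2$-invariants. It invokes a theorem of Bieri to the effect that a solvable group of finite cohomological dimension over $\bZ$ is torsion-free and polycyclic; since $M$ is a closed aspherical manifold, $\Gamma=\pi_1(M)$ has $\mathrm{cd}_\bZ(\Gamma)=\dim M<\infty$, so $\Gamma$ is polycyclic. Then a result of Baues produces an infra-solvmanifold $M_\Gamma$ with $\pi_1(M_\Gamma)\cong\Gamma$; infra-solvmanifolds have vanishing Euler characteristic, and since both $M$ and $M_\Gamma$ are $K(\Gamma,1)$'s, $\chi(M)=\chi(M_\Gamma)=0$.

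This comparison is instructive in light of your closing remarks. Your concern that ``for a general solvable $\Gamma$'' the polycyclic reduction might fail is moot in the setting of the proposition: Bieri's theorem guarantees that solvable fundamental groups of closed aspherical manifolds are \emph{always} polycyclic, so your alternative fibration-over-$S^1$ argument already covers the general case once this is known. What your $L^2$-route buys is a proof that works verbatim for the broader class of infinite amenable fundamental groups, at the cost of importing Atiyah's formula and the Cheeger--Gromov vanishing theorem; the paper's route stays within classical homotopy theory and geometric group theory but relies on the structure theory of solvable groups of finite cohomological dimension.
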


\begin{proof}
 Set $\Gamma=\pi_1(M)$. We notice that $M$ is an Eilenberg-MacLane space $K(\Gamma,1)$. By a result of Bieri (see \cite[Theorem 9.23]{Bi}), $\Gamma$ is torsion-free and polycyclic. In \cite[Theorem 
1.2]{Baues}, Baues has constructed an infra-solvmanifold $M_\Gamma$ with $\pi_1(M_\Gamma)\cong\Gamma$. In particular, it follows that $M_\Gamma$ is an Eilenberg-MacLane space $K(\Gamma,1)$ and that 
$\chi(M_\Gamma)=0$. Since $M$ is also a $K(\Gamma,1)$, one deduces that $\chi(M)=0$.
\end{proof}

\subsection*{Acknowledgments}
The authors would like to thank J. Oprea and S. Rollenske for useful conversations. G. Bazzoni is partially supported by SFB 701 - Spectral Structures and Topological Methods in Mathematics. M. 
Fern\'andez is partially supported by (Spanish) MINECO grant MTM2011-28326-C02-02 and Project UPV/EHU ref.\ UFI11/52. V. Mu\~noz is partially supported by MINECO grant MTM2012-30719.


\begin{thebibliography}{48}

\bibitem{Abbena} 
\textsc{E.\ Abbena}, 
\emph{An example of an almost K\"ahler manifold which is not K\"ahlerian}, Boll.\ Un.\ Mat.\ Ital.\ A (6) \textbf{3} (1984), no. 3, 383--392.

\bibitem{ABCKT} 
\textsc{J. Amor\'os, M. Burger, K. Corlette, D. Kotschick and D. Toledo}, 
\emph{Fundamental groups of compact K\"ahler manifolds}, Math. Surveys and Monographs \textbf{44}, Amer.\ Math.\ Soc., 1996.

\bibitem{Angella} 
\textsc{D.\ Angella}, 
\emph{Cohomological Aspects in Complex Non-K\"ahler Geometry}, Lecture Notes in Mathematics, 2095, Springer-Verlag, Berlin, 2014.

\bibitem{Angella_Tomassini_Zhang} 
\textsc{D.\ Angella, A.\ Tomassini and W.\ Zhang}, 
\emph{On cohomological decomposability of almost-K\"ahler structures}, Proc.\ Amer.\ Math.\ Soc.\ \textbf{142} (2014), no.\ 10, 3615--3630.

\bibitem{Arnold} 
\textsc{V.\ I.\ Arnold}, 
\emph{Mathematical Methods of Classical Mechanics}, Second Edition, Graduate Texts in Mathematics \textbf{60}, Springer, 1997.
 
\bibitem{Audin} 
\textsc{M.\ Audin}, 
\emph{Exemples de vari\'et\'es presque complexes}, Einseign.\ Math.\ (2) \textbf{37} (1991), no.\ 1--2, 175--190.

\bibitem{Audin2} 
\textsc{M.\ Audin}, 
\emph{Torus Actions on Symplectic Manifolds} (Second revised edition) Progress in Mathematics 93, Birkh\"auser, 2004.

\bibitem{Auslander} 
\textsc{L.\ Auslander}, 
\emph{An exposition of the structure of solvmanifolds.\ I.\ Algebraic theory}, Bull.\ Amer.\ Math.\ Soc.\ \textbf{79} (1973), no.\ 2, 227--261.

\bibitem{Auroux} 
\textsc{D.\ Auroux}, 
\emph{Asymptotically holomorphic families of symplectic submanifolds}, Geom.\ Funct.\ Anal.\ \textbf{7} (1997), no. 6, 971--995.

\bibitem{Babenko_Taimanov} 
\textsc{I. K. Babenko and I. A. Ta\v imanov}, 
\emph{On nonformal simply-connected symplectic manifolds}, Siberian Math.\ Journal \textbf{41} (2) (2000), 204--217.

\bibitem{BHPVdV}
\textsc{W. P. Barth, K. Hulek, C. A. M. Peters and A. Van de Ven},
\emph{Compact complex surfaces}, Ergebnisse der Mathematik und ihrer Grenzgebiete (3), 4. Springer-Verlag, Berlin, 2004. 

\bibitem{Baues} 
\textsc{O.\ Baues}, 
\emph{Infra-solvmanifolds and rigidity of subgroups in solvable linear algebraic groups}, Topology \textbf{43} (2004), 903--924.

\bibitem{BFM} 
\textsc{G.\ Bazzoni, M.\ Fern\'andez and V.\ Mu\~{n}oz}, 
\emph{Non-formal co-symplectic manifolds}, Trans.\ Amer.\ Math.\ Soc.\ \textbf{367} (2015), no.\ 6, 4459--4481.

\bibitem{BFM1} 
\textsc{G.\ Bazzoni, M.\ Fern\'andez and V.\ Mu\~{n}oz}, 
\emph{A 6-dimensional simply connected complex and symplectic manifold with no K\"ahler metric}, preprint \urlstyle{rm}\url{http://arxiv.org/abs/1410.6045}.

\bibitem{BG} 
\textsc{G.\ Bazzoni and O.\ Goertsches}, 
\emph{K-cosymplectic manifolds}, Ann.\ Global Anal.\ Geom.\ \textbf{47} (2015), no.\ 3, 239--270.

\bibitem{BLO} 
\textsc{G.\ Bazzoni, G.\ Lupton and J.\ Oprea}, 
\emph{Hereditary properties of co-K\"ahler manifolds}, preprint \urlstyle{rm}\url{http://arxiv.org/abs/1311.5675}.

\bibitem{Bazzoni_Munoz}
\textsc{G.\ Bazzoni and V.\ Mu\~{n}oz}, \emph{ Classification of minimal algebras over any field up to dimension 6}, Trans.\ Amer.\ Math.\ Soc.\ \textbf{364} (2012), no.\ 2, 1007--1028.

\bibitem{BO}
\textsc{G.\ Bazzoni and J.\ Oprea}, 
\emph{On the structure of co-K\"ahler manifolds}, Geom.\ Dedicata \textbf{170} (1) (2014), 71--85.

\bibitem{Benson_Gordon} 
\textsc{C.\ Benson and C.\ S.\ Gordon}, 
\emph{K\"ahler and symplectic structures on nilmanifolds}, Topology \textbf{27} (1988), no.\ 4, 513--518.

\bibitem{Bi}
\textsc{R.\ Bieri}, 
\emph{Homological Dimension of Discrete Groups}, Queen Mary College Mathematical Notes (2nd edition), London, 1981.

\bibitem{Blair} 
\textsc{D.\ Blair}, 
\emph{Riemannian geometry of contact and symplectic manifolds}, Progress in Math.\ 203, Birkh\"auser, 2002.

\bibitem{Brylinski} 
\textsc{J.-L. \ Brylinski}, 
\emph{A differential complex for Poisson manifolds}, J.\ Differential Geom.\ \textbf{28} (1988), no.\ 1, 93--114.

\bibitem{Bock} 
\textsc{C.\ Bock}, 
\emph{On low-dimensional solvmanifolds}, preprint, \urlstyle{rm}\url{http://arxiv.org/abs/0903.2926}.

\bibitem{Cannas} 
\textsc{A.\ Cannas da Silva}, 
\emph{Lectures on symplectic geometry}, Lecture Notes in Mathematics, 1764, Springer-Verlag, Berlin, 2008.

\bibitem{CM_DN_Y} 
\textsc{B.\ Cappelletti-Montano, A.\ de Nicola and I.\ Yudin}, 
\emph{A survey on cosymplectic geometry}, Rev.\ Math.\ Phys.\ \textbf{25} (10), 1343002 (2013).

\bibitem{Cavalc1} 
\textsc{G.\ R.\ Cavalcanti}, 
\emph{The Lefschetz property, formality and blowing up in symplectic geometry}, Trans.\ Amer.\ Math.\ Soc.\ \textbf{359} (2007), no. 1, 333--348.

\bibitem{CFM} 
\textsc{G.\ R.\ Cavalcanti, M.\ Fern\'andez and V.\ Mu\~noz}, 
\emph{Symplectic resolutions, Lefschetz property and formality}, Adv.\ Math.\ \textbf{218} (2008), no. 2, 576--599.

\bibitem{CdLM} 
\textsc{D.\ Chinea, M.\ de Le\'on and J.\ C.\ Marrero}, 
\emph{Topology of cosymplectic manifolds}, J. Math. Pures Appl. \textbf{72} (1993), no. 6, 567--591.

\bibitem{CF} 
\textsc{S.\ Console and A.\ Fino}, 
\emph{On the de Rham cohomology of solvmanifolds}, Ann.\ Sc.\ Norm.\ Super.\ Pisa Cl.\ Sci.\ (5) \textbf{10} (2011), no.\ 4, 801--818.

\bibitem{Console_Macri} 
\textsc{S.\ Console and M.\ Macr\`i}, 
\emph{Lattices, cohomology and models of six-dimensional almost abelian solvmanifolds}, preprint, \urlstyle{rm}\url{http://arxiv.org/abs/1206.5977}.

\bibitem{CoFG} 
\textsc{L. A.\ Cordero, M.\ Fern\'andez and A.\ Gray}, 
\emph{Symplectic manifolds with no K\"ahler structure}, Topology \textbf{25} (1986), no.\ 3, 375--380.

\bibitem{DGMS} 
\textsc{P.\ Deligne, P.\ Griffiths, J.\ Morgan and D.\ Sullivan}, 
\emph{Real Homotopy Theory of K\"{a}hler Manifolds}, Invent. Math. \textbf{29} (1975), no. 3, 245--274.

\bibitem{Donaldson} 
\textsc{S.\ K.\ Donaldson}, 
\emph{Symplectic submanifolds and almost-complex geometry}, J.\ Diff.\ Geom. \textbf{44} (1996), no.\ 4, 666--705.

\bibitem{Donaldson2} 
\textsc{S.\ K.\ Donaldson}, 
\emph{Two-forms on four-manifolds and elliptic equations}, Inspired by S.\ S.\ Chern, 153--172, Nankai Tracts.\ Math.\, 11, World Sci.\ Publ.\ Hackensack, NJ, 2006.

\bibitem{FHT} 
\textsc{Y.\ F\'elix, S.\ Halperin and J.C. Thomas}, 
\textit{Rational Homotopy Theory}, Graduate Texts in Mathematics \textbf{205}, Springer, 2001.

\bibitem{FOT}
\textsc{Y.\ F\'elix, J.\ Oprea and D.\ Tanr\'e}, 
\emph{Algebraic Models in Geometry}, Oxford Graduate Texts in Mathematics, 17. Oxford University Press, 2008.

\bibitem{FdLS} 
\textsc{M.\ Fern\'andez, M.\ de Le\'on and M.\ Saralegui}, 
\emph{A six-dimensional compact symplectic solvmanifold without Kähler structures}, Osaka J.\ Math.\ \textbf{33} (1996), no. 1, 19--35.

\bibitem{FGG}
\textsc{M.\ Fern\'andez, M.\ Gotay and A.\ Gray}, 
\emph{Four-dimensional parallelizable symplectic and complex manifolds}, Proc.\ Amer.\ Math.\ Soc.\ \textbf{103} (1988), no.\ 4, 1209--1212.

\bibitem{FG-1}
\textsc{M.\ Fern\'andez and A.\ Gray}, 
\emph{The Iwasawa manifold}, Differential geometry, Pe\~n\'{\i}scola 1985, 157--159, Lecture Notes in Math., 1209, Springer, Berlin, 1986.

\bibitem{FG-2} 
\textsc{M.\ Fern\'andez and A.\ Gray}, 
\emph{Compact symplectic four dimensional solvmanifolds not admitting complex structures}, Geom.\ Dedicata \textbf{34} (1990), no.\ 4, 295--299.

\bibitem{FM4} 
\textsc{M.\ Fern\'andez and V.\ Mu\~noz}, 
\emph{Homotopy properties of symplectic blow-ups}, Proceedings of the XII Fall Workshop on Geometry and Physics, 95--109, Publ.\ R.\ Soc.\ Mat.\ Esp., 7, 2004.

\bibitem{FM2} 
\textsc{M.\ Fern\'andez and V.\ Mu\~noz}, 
\emph{Formality of Donaldson submanifolds}, Math.\ Zeit. \textbf{250} (2005), no.\ 1, 149--175.

\bibitem{FM3} 
\textsc{M.\ Fern\'andez and V.\ Mu\~noz}, 
\emph{Non-formal compact manifolds with small Betti numbers}, Proceedings of the Conference ``Contemporary Geometry and Related Topics''. N.\ Bokan, 
M.\ Djoric, A.\ T.\ Fomenko, Z.\ Rakic, B.\ Wegner and J.\ Wess (editors), 231--246, 2006.

\bibitem{FM} 
\textsc{M.\ Fern\'{a}ndez and V.\ Mu\~{n}oz}, 
\emph{An $8$-dimensional non-formal simply connected symplectic manifold}, Ann.\ of Math.\ (2) \textbf{167} (2008), no.\ 3, 1045--1054.

\bibitem{FMS} 
\textsc{M.\ Fern\'{a}ndez, V.\ Mu\~{n}oz and J.\ Santisteban}, 
\emph{Cohomologically K\"ahler manifolds with no K\"ahler metrics}, IJMMS.\ \textbf{52} (2003), 3315--3325.

\bibitem{Gompf} 
\textsc{R.\ E.\ Gompf}, 
\emph{A new construction fo symplectic manifolds}, Ann.\ of Math.\ (2) \textbf{142} (1995), no.\ 3, 527--595.

\bibitem{Griffiths_Morgan}
\textsc{P.\ Griffiths and J.\ Morgan},
\emph{Rational Homotopy Theory and Differential Forms} (Second edition) Progress in Mathematics 16, Birkh\"auser, 2013.

\bibitem{Gromov_Pseudo}
\textsc{M.\ Gromov},
\emph{Pseudo holomorphic curves in symplectic manifolds}, Invent.\ Math.\ \textbf{82} (1985), no.\ 2, 307--347.

\bibitem{Gromov}
\textsc{M.\ Gromov},
\emph{Partial differential relations},
Ergebnisse der Mathematik und ihrer Grenzgebiete (3), 9. Springer-Verlag, Berlin, 1986. 

\bibitem{Guan2} 
\textsc{Z.-D.\ Guan}, 
\emph{Modification and the cohomology groups of compact solvmanifolds}, Electron.\ Res.\ Announc.\ Amer.\ Math.\ Soc.\ \textbf{13} (2007), 74--81.
 
\bibitem{Guan} 
\textsc{Z.-D.\ Guan}, 
\emph{Toward a Classification of Compact Nilmanifolds with Symplectic Structures}, Int.\ Math.\ Res.\ Not.\ IMRN (2010), no.\ 22, 4377--4384.

\bibitem{GMP} 
\textsc{V. Guillemin, E. Miranda and A. R. Pires}, 
\emph{Codimension one symplectic foliations and regular Poisson structures}, Bull.\ Braz.\ Math.\ Soc.\ (N.\ S.) \textbf{42} (2011), no.\ 4, 607--623.

\bibitem{GMP1} 
\textsc{V. Guillemin, E. Miranda and A. R. Pires},
\emph{Symplectic and Poisson geometry of $b$-manifolds}, Adv.\ Math.\ \textbf{264} (2014), 864--896.

\bibitem{Hasegawa}
\textsc{K.\ Hasegawa}, 
\emph{Minimal Models of Nilmanifolds}, Proc.\ Amer.\ Math.\ Soc.\ \textbf{106} (1989), no.\ 1, 65--71.

\bibitem{Hasegawa2}
\textsc{K.\ Hasegawa}, 
\emph{A note on compact solvmanifolds with K\"ahler structures}, Osaka J.\ Math.\ \textbf{43} (2006), no.\ 1, 131--135.

\bibitem{Hattori} 
\textsc{A.\ Hattori},
\emph{Spectral sequences in the de Rham cohomology of fibre bundles}, J.\ Fac.\ Sci.\ Univ.\ Tokyo Sect.\ I \textbf{8} (1960), 289--331.

\bibitem{Hu} 
\textsc{D.\ Huybrechts}, 
\emph{Complex geometry. An introduction}, Universitext. Springer-Verlag, Berlin, 2005.

\bibitem{IRTU} 
\textsc{R.\ Ib\'a\~{n}ez, Y.\ Rudyak, A.\ Tralle and L.\ Ugarte}, 
\emph{On certain geometric and homotopy properties of closed symplectic manifolds}, Top. and its Appl. \textbf{127} (2003), no.\ 1-2, 33--45.

\bibitem{Kasuya_2}
\textsc{H.\ Kasuya}, 
\emph{Cohomologically symplectic solvmanifolds are symplectic}, J.\ Symplectic Geom.\ \textbf{9} (2011), no.\ 4, 429--434.

\bibitem{Kasuya}
\textsc{H.\ Kasuya}, 
\emph{Formality and hard Lefschetz property of aspherical manifolds}, Osaka J.\ Math.\ \textbf{50} (2013), no.\ 2, 439--455.

\bibitem{KRT} 
\textsc{J.\ K\k{e}dra, Y.\ Rudyak and A.\ Tralle}, 
\emph{Symplectically aspherical manifolds}, J.\ Fixed Point Theory Appl.\ \textbf{3} (2008), no.\ 1, 1--21.

\bibitem{Kod} 
\textsc{K.\ Kodaira}, 
\emph{On the structure of compact complex analytic surfaces. I}, Amer.\ J.\ Math.\ \textbf{86} (1964), 751--798.

\bibitem{Koszul} 
\textsc{J.-L.\ Koszul}, 
\emph{Crochet de Schouten-Nijenhuis et cohomologie}, in ``\'Elie Cartan et les Math. d'Aujourd'Hui'', Ast\'erisque hors-s\'erie, 1985, 251--271.

\bibitem{Kotschick} 
\textsc{D.\ Kotschick},
\emph{On products of harmonic forms}, Duke Math.\ J.\ \textbf{107} (2001), no.\ 3, 521--531.

\bibitem{KoTe} 
\textsc{D.\ Kotschick and S.\ Terzi\'c}, 
\emph{On formality of generalized symmetric spaces}, Math.\ Proc.\ Cambridge Philos.\ Soc.\ \textbf{134} (2003), no.\ 3, 491--505.

\bibitem{Li} 
\textsc{H.\ Li}, 
\emph{Topology of co-symplectic/co-{K}\"ahler manifolds}, Asian J. Math., \textbf{12} (2008), no.\ 4, 527--543.

\bibitem{Li_Zhang} 
\textsc{T.-J.\ Li and W.\ Zhang}, 
\emph{Comparing tamed and compatible symplectic cones and cohomological properties of almost complex manifolds}, Commun.\ Anal.\ Geom.\ 17 (2009), no.\ 4, 651--683.

\bibitem{Libermann} 
\textsc{P.\ Libermann}, 
\emph{Sur les automorphismes infinit\'esimaux des structures symplectiques et des structures de contact}, Colloque G\'eom.\ Diff.\ Globale (Bruxelles 1958), 37--59, 1959.

\bibitem{Libermann_Marle} 
\textsc{P.\ Libermann and C. \ Marle}, 
\emph{Symplectic Geometry and Analytical Mechanics}, Kluwer, Dordrecht, 1987.

\bibitem{Lupton_Oprea} 
\textsc{G.\ Lupton and J.\ Oprea}, 
\emph{Symplectic manifolds and formality}, J.\ Pure Appl.\ Algebra \textbf{91} (1994), no.\ 1-3, 193--207.

\bibitem{Lupton_Oprea_Gottlieb} 
\textsc{G.\ Lupton and J.\ Oprea}, 
\emph{Cohomologically symplectic spaces: toral actions and the Gottlieb group}, Trans.\ Amer.\ Math.\ Soc.\ \textbf{347} (1995), no.\ 1, 261--288.

\bibitem{Macri} 
\textsc{M.\ Macr\`i}, 
\emph{Cohomological properties of unimodular six dimensional solvable Lie algebras}, Differential Geom.\ Appl.\ 31 (2013), no.\ 1, 112--129.

\bibitem{Mal} 
\textsc{A.\ Mal'\v cev}, 
\emph{On a class of homogeneous spaces}, Izv.\ Akad.\ Nauk.\ Armyan.\ SSSR Ser.\ Mat.\ \textbf{13} (1949), 201--212.

\bibitem{Marsden_Weinstein} 
\textsc{J.\ E.\ Marsden and A.\ Weinstein}, 
\emph{Reduction of symplectic manifolds with symmetry}, Rep.\ Mathematical Phys.\ \textbf{5} (1974), no.\ 1, 121--130.

\bibitem{MT}
\textsc{D.\ Mart{\'\i}nez Torres},
\emph{Codimension-one foliations calibrated by nondegenerate closed 2-forms}, Pacific J.\ Math.\ \textbf{261} (2013), no.\ 1, 165--217.

\bibitem{Mathieu} 
\textsc{O.\ Mathieu}, 
\emph{Harmonic cohomology classes of symplectic manifolds}, Comment.\ Math.\ Hel.\ \textbf{70} (1995), no.\ 1, 1--9.

\bibitem{McDuff} 
\textsc{D.\ McDuff}, 
\emph{Examples of symplectic simply connected manifolds with no K\"ahler structure}, J.\ Diff.\ Geom. \ \textbf{20} (1984), no.\ 1, 267--277.

\bibitem{McD_Salamon} 
\textsc{D.\ McDuff and D.\ Salamon}, 
\emph{Introduction to Symplectic Topology}, Second edition. Oxford Mathematical Monographs, 1998.

\bibitem{McD_Sal_J_hol} 
\textsc{D.\ McDuff and D.\ Salamon}, 
\emph{$J$-holomophic curves and symplectic topology}, Colloquium Publications Volume 52, American Mathematical Society, 2004.

\bibitem{Merkulov} 
\textsc{S.\ A.\ Merkulov}, 
\emph{Formality of canonical symplectic complexes and Frobenius manifolds}, Internat.\ Math.\ Res.\ Notices (1998), no.\ 14, 727--733.

\bibitem{Miller} 
\textsc{J. T. Miller}, 
\emph{On the formality of $(k-1)$-connected compact manifolds of dimension less than or equal to $(4k-2)$}, Illinois J.\ Math.\ \textbf{23} (1979), 253--258.

\bibitem{MPS} 
\textsc{V.\ Mu\~noz, F.\ Presas and I.\ Sols}, 
\emph{Almost holomorphic embeddings in Grassmannians with applications to singular symplectic submanifolds}, J.\ Reine Angew.\ Math.\ \textbf{547} (2002), 149--189.

\bibitem{Nomizu} 
\textsc{K.\ Nomizu}, 
\emph{On the cohomology of compact homogeneous spaces of nilpotent Lie groups}, Ann.\ of Math.\ \textbf{59} (1954), no.\ 2, 531--538.

\bibitem{Oprea_Tralle} 
\textsc{J.\ Oprea and A.\ Tralle}, 
\emph{Symplectic manifolds with no K\"ahler structure}, Lecture Notes in Mathematics, 1661, Springer-Verlag, Berlin, 1997.

\bibitem{Ornea_Pilca} 
\textsc{L.\ Ornea and M.\ Pilca}, 
\emph{Remarks on the product of harmonic forms}, Pacific J.\ Math.\ \textbf{250} (2011), no.\ 2, 353--363.

 
\bibitem{Salamon} 
\textsc{S.\ Salamon}, 
\emph{Complex structures on nilpotent Lie algebras}, J.\ Pure Appl.\ Algebra \textbf{157} (2001), no.\ 2--3, 311--333.

\bibitem{SY} 
\textsc{H.\ Sawai and T.\ Yamada}, 
\emph{Lattices on Benson-Gordon type solvable Lie groups}, Topology Appl.\ \textbf{149} (2005), no. 1--3, 85--95.

\bibitem{Seidel}
\textsc{P.\ Seidel},
\emph{Fukaya categories and Picard-Lefschetz theory}, European Mathematical Society, Z\"urich, 2008.

\bibitem{Sullivan}
\textsc{D.\ Sullivan},
\emph{Infinitesimal Computations in Topology}, Publications Math\'ematiques de l'I.\ H.\ \'E.\ S.\ \textbf{47} (1977), 269--331.

\bibitem{Thurston}
\textsc{W.\ Thurston},
\emph{Some simple examples of symplectic manifolds}, Proc.\ Amer.\ Math.\ Soc.\ \textbf{55} (1976), no.\ 2, 467--468.
 

\bibitem{Tischler}
\textsc{D.\ Tischler},
\emph{Closed 2-forms and an embedding theorem for symplectic manifolds}, J.\ Diff.\ Geom. \textbf{12} (1977), 229--235.
 
\bibitem{Yan} 
\textsc{D.\ Yan}, 
\emph{Hodge Structure on Symplectic Manifolds}, Adv.\ Math.\ \textbf{120} (1996), no.\ 1, 143--154.

\end{thebibliography}
\end{document}